\numberwithin{figure}{section}
\numberwithin{table}{section}
\newtheorem{theorem}{Theorem}[section]
\newtheorem{lemma}[theorem]{Lemma}
\newtheorem{prop}[theorem]{Proposition}
\theoremstyle{definition}
\newtheorem{definition}[theorem]{Definition}
\newtheorem{example}[theorem]{Example}
\newtheorem{cor}[theorem]{Corollary}
\theoremstyle{remark}
\newtheorem{remark}[theorem]{Remark}
\numberwithin{equation}{section}
\newfont{\tap}{tap scaled 650}
\def \H{{\mathbb H}}
\def \E{{\mathbb E}}
\def \Q{{\mathbb Q}}
\def \C{{\mathbb C}}
\def \R{{\mathbb R}}
\def \Z{{\mathbb Z}}
\def \KK{\widehat{K}}
\def \QQ{\widehat{\mathbb Q}}
\def \CC{\widehat{\mathbb C}}
\def \RR{\widehat{\mathbb R}}
\def \ZZ{\widehat{\mathbb Z}}
\def \SL{SL}
\def \PGL{ PGL}
\def \[{[ }
\def \]{] }
\def \T{{\mathcal T}}
\def\x{\mathbf{x}}
\renewcommand{\bar}{\overline}
\definecolor{dgreen}{rgb}{0,0.5,0}
\definecolor{dred}{rgb}{0.5,0,0}
\def \tsin{{\angle}_\mathcal T}
\begin{document}
\thispagestyle{empty}
\title[$3$D Farey graph, lambda lengths and $SL_2$-tilings]{$3$D Farey graph, lambda lengths and $SL_2$-tilings}
\author[A.~Felikson]{Anna Felikson}
\address{Department of Mathematical Sciences, Durham University, Upper Mountjoy Campus, Stockton Road, Durham, DH1 3LE, UK}
\email{anna.felikson@durham.ac.uk} 
\thanks{Research was supported in part by the Leverhulme Trust research grant RPG-2019-153 (PT) and NSF grant DMS-2054255 (KS)}
\author[O.~Karpenkov]{Oleg Karpenkov}
\address{Department of Mathematical Sciences, University of Liverpool,
Mathematical Sciences Building, Liverpool L69 7ZL, UK}
\email{karpenk@liverpool.ac.uk}

\author[K.~Serhiyenko]{Khrystyna Serhiyenko}
\address{University of Kentucky, Lexington, Department of Mathematics, 951 Patterson Office Tower, Lexington, KY 40506-0027, USA}
\email{khrystyna.serhiyenko@uky.edu}

\author[P.~Tumarkin]{Pavel Tumarkin}
\address{Department of Mathematical Sciences, Durham University,  Upper Mountjoy Campus, Stockton Road, Durham, DH1 3LE, UK}
\email{pavel.tumarkin@durham.ac.uk}

\begin{abstract}
We explore a three-dimensional counterpart of the Farey tessellation and its relations to Penner's lambda lengths and $SL_2$-tilings. In particular, we prove a three-dimensional version of Ptolemy relation, and generalise results of Short~\cite{Sh} to classify tame  $SL_2$-tilings over Eisenstein integers in terms of pairs of paths in the $3$D Farey graph. 

\end{abstract}

\maketitle
\setcounter{tocdepth}{1}
\tableofcontents

\section{Introduction and main results}

We study geometric aspects of Farey graph over Eisenstein integers and its realisation in the hyperbolic three-dimensional space as the $1$-skeleton of the union of the symmetry planes (including points at the absolute) 
of the reflection group of the regular ideal hyperbolic tetrahedron.
Our first main goal is to generalise relations between Penner's $\lambda$-lengths and $SL_2(\Z)$-tilings and to prove 
a three-dimensional version of Ptolemy relation. Secondly, we classify tame  $SL_2$-tilings over Eisenstein integers 
in terms of pairs of paths in the $3$D Farey graph.

The classical notion of Farey graph, together with its close relatives such as circle packings, continued fractions, Conway-Coxeter friezes and $SL_2$-tilings, is a subject of large and ever growing literature. 
Overviews of different aspects of the theory can be found in~\cite{Hat,MGO,S}.  Moreover, Farey graph also appears in the study of discrete group of symmetries of the hyperbolic plane $\H^2$, which has been studied exhaustively. It is then interesting to ask about which features of this theory can be generalised or extended to higher dimensions.

There are many natural generalisations of the above mentioned classical notions, based on substitution of $\Z$ with various rings.
In 1887 in his fundamental work~\cite{H}, A.~Hurwitz initiated a systematic study of the continued fractions 
over $\C$ and over various subrings in $\C$ (see also \cite{H2}).
In 1951, 
Cassels, Ledermann and Mahler made the first steps in Farey graphs for Gaussian and Eisenstein numbers in~\cite{CLM}. 
A little later in his paper~\cite{Sch},  Schmidt introduced a counterpart of the Farey graph for all imaginary quadratic fields $K=\Q(\sqrt{-d})$, 
where $d$ is square-free, see also paper~\cite{V} by Vulakh for further generalisations (recall that $d=1,3$ 
correspond to the Gaussian and Eisenstein numbers considered in~\cite{CLM}).  We would also like to mention an essentially 
different three-dimensional approach to Farey graph developed by Beaver and Garrity~\cite{BG}, based on multidimensional Farey addition that does not appear in the theory of complex continued fractions.
Furthermore, various other objects related to Farey graphs have also been extended beyond the classical setting. 
In particular, a three-dimensional analogue of Ford circles  for  $d=1$ was introduced already by Ford~\cite{F}; for other fields see e.g.~\cite{R,N}.
In~\cite{St}, Stange studied various circle packings arising from Bianchi groups $PSL_2(\mathcal O_K)$, 
where $\mathcal O_K$ is the ring of integers of an imaginary quadratic field $K$. 
Coxeter~\cite{C2} considered examples of friezes with quadratic irrational entries,
Holm and Jorgensen~\cite{HJ} used $n$-angulations of polygons to classify friezes with quiddity 
row consisting of positive integer multiples of $2\cos (\pi/p)$.

\bigskip

In this paper, we consider a $3$-dimensional analogue of the Farey graph arising from a tessellation of hyperbolic space $\H^3$ by regular hyperbolic ideal simplices (used in place of a tessellation of $\H^2$ by ideal triangles). 
We call it
the {\it tetrahedral graph} $\mathcal T$. The graph $\mathcal T$ inherits many good properties of the classical Farey graph $\mathcal F$ (see Section~\ref{3D Farey graph: tetrahedral graph} for details and essential definitions). In particular, the vertices of $\mathcal T$ are precisely points of  $\QQ(\sigma)=\Q(\sigma)\cup \{\infty \}$, where $\sigma= e^{i\pi/3}=\frac12+i\frac{\sqrt{3}}{2}$, the group  of symmetries of $\mathcal T$ is the Bianchi group $Bi(3)$, and the edges of $\mathcal T$ can be described, similarly to the ones of the Farey graph, via determinants: two irreducible fractions $p/q$ and $r/s\in \QQ(\sigma)$ are joined by an edge if and only if $|ps-rq|=1$ (see Section~\ref{sec-det}).
Furthermore, as for the Farey graph, faces of $\mathcal T$ can be described via Farey addition (see Section~\ref{sec-faces}).

Another property inherited by the tetrahedral graph is the relation with {\em $\lambda$-lengths}. Given two points $x,y\in \partial \H^d$ and a choice of horospheres $h_x,h_y$ centred at $x$ and $y$, Penner~\cite{P} introduced the notion of $\lambda$-length $\lambda_{xy}$ between $x$ and $y$ as $\lambda_{xy}=e^{d/2}$, where $d$ is the signed distance between $h_x$ and $h_y$. Penner also showed that for an ideal quadrilateral $xyzt$, the corresponding $\lambda$-lengths satisfy Ptolemy relation
$$ \lambda_{xz}\lambda_{yt}= \lambda_{xy}\lambda_{zt}+ \lambda_{yz}\lambda_{xt}.$$

Given two irreducible fractions $p/q, r/s\in \QQ(\sigma)$, we can also define the {\em det-length}  $l(p/q,r/s)$ as the absolute value of the determinant $l(p/q,r/s)=|ps-rq|$. We then choose a distinguished set of horospheres at points of  $\QQ(\sigma)$ (see Section~\ref{sigma-pt}; the horospheres are represented by {\em Ford spheres}~\cite{R}) and show that $\lambda$-lengths computed with respect to these horospheres coincide with det-lengths.
\setcounter{section}{4}
\setcounter{theorem}{11}
\begin{theorem}
\label{l=l}
  Let $X,Y\in \QQ(\sigma)$ be two  irreducible fractions. Let the standard horosphere be chosen at every point of $\QQ(\sigma)$.
Then $\lambda_{XY}=l_{XY}$.
\end{theorem}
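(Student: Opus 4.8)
The plan is to work in the upper half-space model of $\H^3$, in which the boundary is identified with $\C\cup\{\infty\}$, the fraction $p/q\in\QQ(\sigma)$ sits at the point $z=p/q\in\C$, and $\infty$ corresponds to $1/0$. The whole statement then reduces to two ingredients: the explicit Euclidean shape of the standard horospheres, and the formula expressing a $\lambda$-length in terms of this shape. I would first recall, from the construction of the standard (Ford) horospheres, that the horosphere at an irreducible fraction $p/q$ is the Euclidean sphere tangent to the boundary at $z=p/q$ with Euclidean diameter $D_{p/q}=1/|q|^2$, while the horosphere at $\infty$ is the horizontal plane $\{t=1\}$. It is essential here that the diameter is governed by $|q|^2$, the square of the complex modulus of the denominator; note also that this makes $D_{p/q}$ independent of the choice of unit used to normalise $(p,q)$, so both sides of the claimed identity are well defined on $\QQ(\sigma)$.

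Next I would establish the model-side formula for the $\lambda$-length. For two horospheres based at distinct finite boundary points $z_X,z_Y\in\C$ with Euclidean diameters $D_X,D_Y$, the signed distance $\delta$ between them is measured along their common perpendicular geodesic, namely the semicircle over the segment joining $z_X$ and $z_Y$; evaluating the hyperbolic length cut out between the two horospheres gives
$$\lambda_{XY}=e^{\delta/2}=\frac{|z_X-z_Y|}{\sqrt{D_X D_Y}}.$$
For the boundary case $Y=\infty$, the analogous computation between the plane $\{t=1\}$ and a finite horosphere of diameter $D_X$ yields $\lambda_{X\infty}=1/\sqrt{D_X}$. I regard this signed-distance computation as the technical heart of the proof: one must set up the geodesic joining the two base points, locate its two intersection points with the horospheres, and compute the hyperbolic length between them with the correct sign (positive precisely when the horospheres are disjoint). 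A useful consistency check is that two horospheres are externally tangent exactly when $|z_X-z_Y|=\sqrt{D_X D_Y}$, which forces $\lambda_{XY}=1$ and $\delta=0$, matching the edge condition $|ps-rq|=1$ of $\mathcal T$.

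Finally I would substitute the Ford diameters. Writing $X=p/q$, $Y=r/s$ with $D_X=1/|q|^2$, $D_Y=1/|s|^2$ and using $|z_X-z_Y|=|p/q-r/s|=|ps-rq|/(|q|\,|s|)$, the formula collapses to $\lambda_{XY}=|q|\,|s|\cdot|p/q-r/s|=|ps-rq|=l_{XY}$. In the case $Y=\infty=1/0$ one has $l_{X\infty}=|p\cdot 0-1\cdot q|=|q|$, matching $\lambda_{X\infty}=1/\sqrt{D_X}=|q|$. Thus both cases give $\lambda_{XY}=l_{XY}$, completing the argument. The only genuinely geometric step is the length computation of the second paragraph; everything surrounding it is the algebra of determinants over $\QQ(\sigma)$, so I expect the main work (and the place where conventions must be pinned down carefully) to be that signed-distance formula rather than the final substitution.
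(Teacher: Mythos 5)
Your argument is correct, but it is not the route the paper takes for its main proof: it coincides instead with the alternative, ``purely computational'' approach that the paper only sketches in Remark~\ref{radius}. The paper's own proof of Theorem~\ref{lambda=l} is an induction on the number of fundamental triangles (respectively tetrahedra) crossed by the geodesic $XY$: it first shows that both $\lambda$-lengths and det-lengths satisfy the same quadratic ``tetrahedral Ptolemy'' relation (Theorems~\ref{1} and~\ref{2}), then uses the two-roots observation of Remark~\ref{two roots} to pin down which root each quantity is, with the coplanar case handled separately via the ordinary Ptolemy relation. What the paper's route buys is independence from the explicit Euclidean geometry of the Ford spheres and a demonstration that the Ptolemy-type recursions alone propagate the identity from the tangent initial configuration; what your route buys is brevity and transparency. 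The one ingredient you assert rather than prove is that the standard horosphere at an irreducible $p/q$ has Euclidean diameter $1/|q|^2$; this does require justification (it follows from the definition of the standard horospheres as the $\PGL_2(\Z[\sigma])$-orbit of the tangent configuration at $(0,1,\sigma,\infty)$ together with the standard fact that a M\"obius map with unit determinant and second row $(q\ s)$ sends the horoball $\{t\ge 1\}$ to a ball of diameter $1/|q|^2$ tangent at $p/q$ --- the paper cites Rieger for this). Your general two-finite-points formula $\lambda_{XY}=|z_X-z_Y|/\sqrt{D_XD_Y}$ is consistent with Example~\ref{z^2/2h}(b), and the final substitution is correct; alternatively, as in Remark~\ref{radius}, one can avoid the general formula entirely by moving $Y$ to $\infty$ with an element of $\PGL_2(\Z[\sigma])$ and invoking invariance of both lengths, which reduces everything to Example~\ref{z^2/2h}(a).
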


To prove Theorem~\ref{lambda=l}, we first show that $\lambda$-lengths between vertices of $\mathcal T$ satisfy an analogue of the Ptolemy relation:

\setcounter{theorem}{6}
\begin{theorem}
\label{thm-fund}
  Let $A_1A_2A_3A_4$ be a fundamental tetrahedron of $\mathcal T$ with vertices in  $\QQ(\sigma)$,  choose any $X\in  \QQ(\sigma)$ distinct from $A_i$.
Let $\lambda_i=\lambda_{XA_i}$ be the $\lambda$-length of $XA_i$, $i=1,\dots,4$.
Then
$$
\sum\limits_{i=1}^4 \lambda_i^4=\sum\limits_{1\le i<j\le 4} \lambda_i^2\lambda_j^2.
$$
\end{theorem}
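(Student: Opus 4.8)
The plan is to pass to the hyperboloid (Minkowski) model of hyperbolic space and translate the statement into linear algebra on the light cone. Realise $\mathbb{H}^3$ inside $\mathbb{R}^{3,1}$ equipped with a bilinear form $\langle\,,\,\rangle$ of signature $(3,1)$, so that the ideal boundary is the projectivised future light cone. Following Penner, a decorated ideal point (an ideal point together with a horosphere) corresponds to a vector on the future light cone, and with the standard normalisation the $\lambda$-length between two decorated points $Z,W$ is recovered from the form via $\lambda_{ZW}^2=-\langle v_Z,v_W\rangle$. Let $v_i$ denote the light-cone vector attached to $A_i$ with its standard horosphere, and $v_X$ the vector attached to $X$. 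Everything then reduces to a computation among the five vectors $v_1,\dots,v_4,v_X$.

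The first substantive step is to pin down the Gram matrix of $v_1,\dots,v_4$. Since the tetrahedron is a regular ideal simplex and the standard horospheres form a $Bi(3)$-invariant family, the subgroup of $Bi(3)$ stabilising $A_1A_2A_3A_4$ realises the rotational symmetries of the tetrahedron; as these act transitively on the six edges and preserve the chosen horospheres, all six edge $\lambda$-lengths share a common value $c$. Hence $\langle v_i,v_i\rangle=0$ and $\langle v_i,v_j\rangle=-c^2$ for $i\neq j$. This Gram matrix has eigenvalues $-3c^2$ and $c^2$ (the latter with multiplicity three), so it is nondegenerate of signature $(3,1)$; consequently $v_1,\dots,v_4$ are linearly independent and form a basis of $\mathbb{R}^{3,1}$. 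I would flag establishing this common value --- that is, the equivariance of the standard horospheres under the edge-transitive tetrahedral symmetries, argued \emph{without} appeal to the later Theorem~\ref{l=l} so as to avoid circularity --- as the main obstacle; the precise value of $c$ will turn out to be irrelevant.

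Write $v_X=\sum_{i=1}^4 c_i v_i$ and set $S=\sum_i c_i$. Pairing with $v_j$ gives
$$\lambda_j^2=-\langle v_X,v_j\rangle=-\sum_{i\neq j}c_i\langle v_i,v_j\rangle=c^2\,(S-c_j).$$
Because $X$ is an ideal point, $v_X$ is itself light-like, and expanding $\langle v_X,v_X\rangle=0$ against the same Gram matrix yields $\sum_{i\neq j}c_ic_j=0$, i.e.\ $\sum_i c_i^2=S^2$. Putting $p_j=\lambda_j^2=c^2(S-c_j)$, these two relations give $\sum_j p_j=3c^2S$ and, using $\sum_j(S-c_j)^2=2S^2+\sum_j c_j^2=3S^2$, also $\sum_j p_j^2=3c^4S^2$. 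The elementary identity $\big(\sum_j p_j\big)^2=\sum_j p_j^2+2\sum_{i<j}p_ip_j$ then forces $\sum_{i<j}p_ip_j=\sum_j p_j^2$, which is exactly the claimed relation $\sum_i\lambda_i^4=\sum_{i<j}\lambda_i^2\lambda_j^2$. Note that both $c$ and the normalisation of the horosphere at $X$ drop out of the final identity, matching the fact that the two sides scale identically under rescaling $v_X$; the remaining work is purely the routine linear algebra indicated above.
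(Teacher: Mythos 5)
Your proof is correct, but it takes a genuinely different route from the paper. The paper's primary proof works in the upper half-space model: it normalises $A_1A_2A_3A_4=(0,1,\sigma,\infty)$, computes $\lambda_i=|z-A_i|/\sqrt{2r}$ and $\lambda_4=1/\sqrt{2r}$ from Example~\ref{z^2/2h}, and verifies the identity by direct expansion of $a_1=z\bar z$, $a_2=(z-1)(\bar z-1)$, $a_3=(z-\sigma)(\bar z-\bar\sigma)$, $a_4=1$; a second proof deduces the relation from the Soddy--Gosset theorem applied to the four mutually tangent horospheres plus the boundary plane. Your argument instead lives in the Minkowski model and is closest in spirit to the remark (attributed to Izmestiev) following Theorem~\ref{10}: you exploit the linear dependence of the five light-cone vectors, but rather than expanding a $5\times 5$ Gram determinant you use the special Gram matrix $-c^2(J-I)$ of the regular decorated tetrahedron to solve for $v_X$ explicitly, after which the identity falls out of $\sum_i c_i^2=S^2$ in two lines. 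I checked the arithmetic ($\sum_j p_j=3c^2S$, $\sum_j p_j^2=3c^4S^2$, hence $\sum_{i<j}p_ip_j=3c^4S^2=\sum_j p_j^2$) and it is right; the signature computation showing $v_1,\dots,v_4$ is a basis is also correct. What your approach buys is coordinate-freedom and independence of the value of $c$; what the paper's computation buys is that it simultaneously sets up the machinery (Example~\ref{z^2/2h}) reused for Theorem~\ref{2} and Lemma~\ref{formula}. One small simplification: the step you flag as the main obstacle --- establishing a common edge value $c$ without circularity --- requires no symmetry argument at all, since the standard horospheres are \emph{defined} so that the four horospheres at the vertices of every fundamental tetrahedron are mutually tangent, giving $c=1$ directly. (Note also that, as your derivation makes visible, the identity is homogeneous in the horosphere at $X$ but genuinely uses the tangency of the horospheres at the $A_i$; it is not invariant under rescaling a single horosphere at one $A_i$.)
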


We prove Theorem~\ref{1} in two ways, i.e. by a direct computation and as a corollary of the Soddy-Gosset theorem relating the radia of five mutually tangent spheres in $\R^3$. We then show (in Theorem~\ref{2}) that det-lengths satisfy the same Ptolemy relation, which eventually implies Theorem~\ref{lambda=l}.

We then prove a $3$-dimensional counterpart of Ptolemy relation which can be applied to any $5$ points in $\CC$:
\setcounter{theorem}{16}
\begin{theorem}
Let $X_1,\dots,X_5\in \CC=\partial \H^3$ be $5$ distinct points. Suppose that there are horospheres chosen in these points. Let $\lambda_{ij}=\lambda_{X_iX_j}$.  Then
$$ \sum\limits_{i<j} \lambda_{ij}^4 \lambda_{kl}^2\lambda_{lm}^2\lambda_{mk}^2= \sum\limits_{\text{{\rm cycles} $(ijklm)$}} \lambda_{ij}^2\lambda_{jk}^2\lambda_{kl}^2\lambda_{lm}^2\lambda_{mi}^2,$$
where all indices $i,j,k,l,m$ are distinct.
\end{theorem}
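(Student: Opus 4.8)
The plan is to reduce the statement to a purely Euclidean identity among the pairwise distances of five coplanar points, and then to recognise that identity as the vanishing of a single $5\times 5$ determinant. Since $\lambda$-lengths are invariant under the isometries of $\H^3$ (that is, under the $PSL_2(\C)$-action on $\partial\H^3=\CC$), I would first apply a Möbius transformation to assume that all five points $X_1,\dots,X_5$ are finite, i.e.\ lie in $\C=\R^2$. In the upper half-space model the chosen horosphere at a finite point $X_i$ is a Euclidean sphere tangent to $\C$, of some radius $\rho_i>0$, and a standard computation (anchored by the tangency case $\lambda=1$) gives
\[
\lambda_{ij}^2=\frac{|X_i-X_j|^2}{4\rho_i\rho_j}.
\]
Writing $d_{ij}=|X_i-X_j|$ and substituting this into both sides of the asserted relation, every monomial acquires the common factor $\big(1024\prod_{a=1}^5\rho_a^2\big)^{-1}$: on the left the two indices $i,j$ occur to total degree $2$ in the $\rho$'s via $\lambda_{ij}^4$ while each of $k,l,m$ lies on two edges of the triangle, and on the right each vertex lies on exactly two edges of the $5$-cycle. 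Cancelling this nonzero factor, the theorem becomes equivalent to the distance identity
\[
\sum_{i<j} d_{ij}^4\,d_{kl}^2d_{lm}^2d_{mk}^2=\sum_{\text{cycles }(ijklm)} d_{ij}^2d_{jk}^2d_{kl}^2d_{lm}^2d_{mi}^2, \qquad (\star)
\]
to be proved for any five points of $\R^2$.

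To prove $(\star)$ I would introduce the symmetric $5\times5$ matrix $D=(d_{ij}^2)$ with zero diagonal and claim that $(\star)$ is precisely the expansion of $\det D=0$. The vanishing is immediate from planarity: since $d_{ij}^2=|X_i|^2+|X_j|^2-2\langle X_i,X_j\rangle$, we have $D=a\mathbf 1^{T}+\mathbf 1 a^{T}-2G$, where $a_i=|X_i|^2$ and $G=(\langle X_i,X_j\rangle)$ is the Gram matrix of five vectors in $\R^2$. Thus $D$ is the sum of two rank-one matrices and a matrix of rank at most $2$, so $\operatorname{rank}D\le 4<5$ and $\det D=0$. It is exactly here that the dimension $\partial\H^3=\R^2$ is used: for $\partial\H^{n+1}=\R^n$ one would only get $\operatorname{rank}D\le n+2$, which forces $\det D=0$ precisely when $n\le 2$; this is what makes $(\star)$ the three-dimensional analogue of the classical planar Ptolemy relation, itself the case $n=1$ of a $4\times4$ determinant.

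It then remains to match the combinatorics. Expanding $\det D$ over permutations, every permutation with a fixed point drops out because the diagonal vanishes, so only derangements of $\{1,\dots,5\}$ survive, and these have cycle type either a single $5$-cycle or a $2$-cycle together with a $3$-cycle. A $5$-cycle is even and, together with its inverse, contributes the monomial $d_{ij}^2d_{jk}^2d_{kl}^2d_{lm}^2d_{mi}^2$ twice with sign $+$; a $(2,3)$-derangement is odd and, together with the inverse of its $3$-cycle, contributes $d_{ij}^4\,d_{kl}^2d_{lm}^2d_{mk}^2$ twice with sign $-$. Hence $\det D=2\big(\text{RHS of }(\star)\big)-2\big(\text{LHS of }(\star)\big)$, and $\det D=0$ yields $(\star)$.

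The main obstacle is precisely this bookkeeping --- verifying the cycle-type count and, crucially, that the parities combine so as to produce the \emph{difference} (rather than the sum) of the two sides, so that the determinant relation reproduces $(\star)$ with the correct signs. As a consistency check one can specialise $(\star)$, equivalently the theorem, to the case where $X_1X_2X_3X_4$ is a fundamental tetrahedron, so that all $\lambda_{A_iA_j}=1$: the $5$-cycle and pair sums then collapse (each surviving edge among the $A_i$ contributing a factor $1$), and one recovers precisely Theorem~\ref{thm-fund}.
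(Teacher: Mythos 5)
Your argument is correct, but it is not the route the paper takes for this theorem. The paper's own proof (via Lemma~\ref{formula}) normalises three of the points to $0,1,\infty$, uses the homogeneity of the identity to choose horospheres making five of the ten $\lambda$-lengths equal to $1$, computes the remaining five from Example~\ref{z^2/2h}, and verifies the polynomial identity directly. Your proof instead reduces everything to the vanishing of $\det(d_{ij}^2)$ for five points of $\R^2$ and reads off the two sides of the identity from the derangement expansion of that determinant; I checked the key steps and they hold: the formula $\lambda_{ij}^2=|X_i-X_j|^2/(4\rho_i\rho_j)$ is correct, the common factor $(4^5\prod_a\rho_a^2)^{-1}$ cancels on both sides, $\operatorname{rank}(d_{ij}^2)\le 4$ follows from the decomposition $a\mathbf 1^T+\mathbf 1a^T-2G$ with $G$ of rank at most $2$, and the $24$ five-cycles (even) versus $20$ derangements of type $(2,3)$ (odd) pair up into the $12$ cyclic terms and $10$ pair terms with opposite signs, so $\det D=2(\mathrm{RHS})-2(\mathrm{LHS})$. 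This is essentially the mechanism sketched in the remark immediately following the theorem in the paper (attributed there to Ivan Izmestiev), which expands the vanishing Gram determinant $\det(\langle c_iu_i,c_ju_j\rangle)$ of $n+2$ isotropic vectors in the hyperboloid model; your Euclidean distance matrix is that Gram matrix up to conjugation by a diagonal matrix. What your version buys over the paper's computation is a conceptual explanation of the coefficients and signs, and it generalises immediately to $n+2$ points on $\partial\H^n$ for any $n$; what the paper's direct verification buys is brevity and no combinatorial bookkeeping. One small point worth stating explicitly in your write-up: a M\"obius transformation with pole off the five given points always exists, so the reduction to five finite points is legitimate.
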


Next, we apply $\mathcal T$ to generalise results of Short~\cite{Sh} to classify $\SL_2(\Z[\sigma])$-tilings.
A {\em path} $(v_i)$ in $\mathcal T$ is a (bi-infinite) sequence of vertices of $\mathcal T$ such that $v_i$ and $v_{i+1}$ are connected by an edge of $\mathcal T$. We normalise the paths by requiring that the expressions $v_i=p_i/q_i$ satisfy the condition $p_iq_{i+1}-p_{i+1}q_i=1$. We then prove the following result, which is a direct generalisation of the result of~\cite{Sh}.

\setcounter{section}{5}
\setcounter{theorem}{19}
\begin{theorem}
\label{thm-eq}
  Given two normalised paths  $v_i=p_i/q_i$ and $u_j=r_j/s_j$, the map $(u_i,v_j)\!\mapsto m_{ij}=p_is_j-q_ir_j$ provides a bijection between equivalence classes of the tame $SL_2(Z[\sigma])$-tilings and   pairs of paths in $\mathcal T$ considered up to simultaneous action of $SL_2 (Z[\sigma])$ on both paths.
\end{theorem}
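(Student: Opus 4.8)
The plan is to follow the strategy of Short~\cite{Sh}, replacing the arithmetic of $\Z$ by that of $\Z[\sigma]$ and paying careful attention to the six units of $\Z[\sigma]$. I would organise the argument into three parts: well-definedness of the map, surjectivity by an explicit reconstruction, and injectivity via uniqueness of a rank-two factorisation.

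First I would check that a pair of normalised paths produces a tame $SL_2(\Z[\sigma])$-tiling. Writing $m_{ij}=p_is_j-q_ir_j=\det\left(\begin{smallmatrix} p_i & r_j \\ q_i & s_j\end{smallmatrix}\right)$ exhibits $M=(m_{ij})$ as a product $M=PQ$, where $P$ has rows $(p_i,q_i)$ and $Q$ has $j$-th column $(s_j,-r_j)^{T}$. By the Cauchy--Binet identity the adjacent $2\times 2$ minor factorises as
$$\det\begin{pmatrix} m_{i,j} & m_{i,j+1} \\ m_{i+1,j} & m_{i+1,j+1}\end{pmatrix}=\det\begin{pmatrix} p_i & q_i \\ p_{i+1} & q_{i+1}\end{pmatrix}\det\begin{pmatrix} s_j & s_{j+1} \\ -r_j & -r_{j+1}\end{pmatrix},$$
and each factor equals $1$ by the normalisation of the two paths, so $M$ satisfies the $SL_2$-condition. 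Since $M$ is a product of an $\infty\times 2$ and a $2\times\infty$ matrix it has rank at most two, hence every $3\times 3$ minor vanishes and $M$ is tame. The entries $m_{ij}$ are literally unchanged when a single $g\in SL_2(\Z[\sigma])$ acts on both paths, because $\det g=1$; and replacing a normalisation of a path by another one (the only freedom being $(p_i,q_i)\mapsto \varepsilon^{(-1)^i}(p_i,q_i)$ for a unit $\varepsilon$) multiplies the rows of $M$ by an alternating sequence of units, which is exactly the equivalence imposed on tilings. Thus the map descends to the stated equivalence classes.

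For surjectivity I would reconstruct the paths from a given tame tiling $M$. Tameness must first be upgraded to the vanishing of \emph{all} $3\times 3$ minors, i.e. to $M$ having rank two over $\Q(\sigma)$; over $\Z$ this propagation from adjacent to arbitrary minors is part of Short's analysis~\cite{Sh}, and the same induction works here. Fixing the reference rows $0,1$ and columns $0,1$, whose $2\times 2$ minor equals $1$, Cramer's rule expresses each row as $m_{ij}=x_i\,m_{0,j}+y_i\,m_{1,j}$ with
$$x_i=\det\begin{pmatrix} m_{i,0} & m_{i,1} \\ m_{1,0} & m_{1,1}\end{pmatrix},\qquad y_i=\det\begin{pmatrix} m_{0,0} & m_{0,1} \\ m_{i,0} & m_{i,1}\end{pmatrix};$$
the denominator determinant being $1$ guarantees $x_i,y_i\in\Z[\sigma]$, and the vanishing of the $3\times 3$ minors on rows $0,1,i$ and columns $0,1,j$ gives the displayed identity for all $j$. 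Setting $v_i=x_i/y_i$ and $u_j=(-m_{1,j})/m_{0,j}$ recovers $m_{ij}=p_is_j-q_ir_j$. It remains to see these are normalised paths in $\mathcal T$: coprimality of each pair follows because the relevant consecutive determinant equals $1$, so any common divisor divides $1$ and is a unit; the column path satisfies $r_js_{j+1}-r_{j+1}s_j=m_{0,j}m_{1,j+1}-m_{0,j+1}m_{1,j}=1$ directly, while for the row path the Cauchy--Binet factorisation of the adjacent minor $1=(x_iy_{i+1}-x_{i+1}y_i)\cdot 1$ forces $x_iy_{i+1}-x_{i+1}y_i=1$. In particular the consecutive determinants are units, so consecutive vertices are joined by edges of $\mathcal T$, and $M$ lies in the image.

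Finally, injectivity amounts to uniqueness of the factorisation $M=PQ$ up to the simultaneous action. Since consecutive vectors $(p_i,q_i)$ of a normalised path span a rank-two $\Z[\sigma]$-module with a distinguished unimodular basis, two such factorisations of the same tiling differ by $P\mapsto Pg$, $Q\mapsto g^{-1}Q$ for a single $g\in SL_2(\Z[\sigma])$ (read off from the reference rows, where both factors lie in $SL_2(\Z[\sigma])$), and a short computation with $J=\left(\begin{smallmatrix}0&1\\-1&0\end{smallmatrix}\right)$ identifies this move with the simultaneous action of $g^{T}$ on the two paths. I expect the main obstacle to be the bookkeeping of units throughout: over $\Z[\sigma]$ a path admits several normalisations related by the alternating scaling above, and some of these are realised by diagonal elements $\mathrm{diag}(\varepsilon,\varepsilon^{-1})\in SL_2(\Z[\sigma])$ acting on \emph{both} paths while others are not. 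One must therefore verify that the normalisation ambiguity matches \emph{exactly} the equivalence relation on tilings, and that the residual freedom of the rank-two factorisation is precisely $SL_2(\Z[\sigma])$ rather than a larger group of unit-diagonal transformations. Reconciling these two unit ambiguities under the map, together with the global upgrade from tame to rank two, is the step where the Eisenstein case genuinely differs from Short's integer argument; the remaining determinant identities are formal.
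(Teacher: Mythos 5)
Your proposal is correct and follows essentially the same route as the paper: well-definedness and tameness via the rank-two factorisation $M=PQ$ and Cauchy--Binet (Proposition 5.6 in the paper), surjectivity by reconstructing the row path from the coefficients expressing each row in terms of two reference rows whose $2\times 2$ minor is $1$ (Proposition 5.9), and injectivity from uniqueness of the factorisation once pinned at $(1/0,0/1)$ (Lemma 5.13), with the unit/normalisation ambiguity absorbed exactly into the equivalence relation on tilings. The only cosmetic difference is that the paper first proves the statement for the scalar product $m_{ij}=p_ir_j+q_is_j$ with the action $(A,(A^T)^{-1})$ and then converts to the determinant form with simultaneous action via the substitution $(r_j,s_j)\mapsto(s_j,-r_j)$, whereas you work in the determinant picture throughout; your conjugation computation with $J$ is precisely this conversion.
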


\setcounter{section}{1}
\setcounter{theorem}{0}

Here, we say that two  $SL_2(Z[\sigma])$-tilings are equivalent if one is obtained from the other by multiplication of even rows by $\sigma^{k}$    and of odd rows  by  $\sigma^{-k}$, together with multiplication by  $\sigma^{l}$ (resp.  $\sigma^{-l}$) of even (resp. odd) columns.

In fact, the bijection given in Theorem~\ref{sh-det} can be refined to enumeration of individual tilings rather than equivalence classes. This is achieved by using  {\em $\mathcal T$-angle sequences} of paths we introduce in Section~\ref{sec-path}.

Given a path in  $\mathcal T$,  we construct a sequence of numbers from  $\Z[\sigma]$ called $\mathcal T$-angle sequence of the path  (also known as quiddity sequences for friezes or as itinerary in~\cite{Sh}). We then use Theorem~\ref{sh-det} to provide a geometric interpretation of the classification of $SL_2$-tilings obtained in~\cite{BR}.

In conclusion,
we note that in the classical two-dimensional setting there are also beautiful connections between the combinatorics of $SL_2$-tilings, triangulations of polygons or more generally apeirogons, cluster algebras,  as well as representation theory of quivers.  It would be interesting to see to which extent it is possible to incorporate these ideas into three-dimensional setting. One example of connections between cluster algebras and triangulated three-dimensional hyperbolic manifolds can be found in~\cite{NTY}.

\bigskip
\noindent
The paper is organised as follows.

In Section~\ref{Preliminaries}, we recall basic properties of the classical Farey graph $\mathcal F$.
In Section~\ref{3D Farey graph: tetrahedral graph}, we introduce the tetrahedral graph $\mathcal T$ and describe its properties.
 Section~\ref{Relations on lambda-lengths} is devoted to establishing various relations on $\lambda$-lengths, first between the points of $\QQ(\sigma)$ and then generally in $\H^3$.
In Section~\ref{SL_2-tilings}, we show that $SL_2$-tilings over $\Z[\sigma]$ are classified by pairs of paths in $\mathcal T$.
In Section~\ref{sec-path}, we describe paths in  $\mathcal T$ in terms of sequences of numbers from  $\Z[\sigma]$ ($\mathcal T$-angles of the paths), we then enumerate $SL_2$-tilings over $\Z[\sigma]$ by pairs of infinite sequences (together with an element of $SL_2(\Z[\sigma])$).
Finally, Section~\ref{comments} is devoted to various remarks, including the connection between   paths in $\mathcal T$ and continued fractions over $\Z[\sigma]$, discussion of some properties of $\mathcal T$-angles, and generalisation of the results to other imaginary quadratic fields. We also give a geometric argument to reprove a recent result of Cuntz and Holm~\cite{CH2} concerning friezes over algebraic numbers.

\subsection*{Acknowledgements}
The work was initiated and partially done at the Isaac Newton Institute for Mathematical Sciences, Cambridge; we are grateful to the organizers of the program ``Cluster algebras and representation theory'', and to the Institute for support and hospitality during the program; this work was supported by EPSRC grant no EP/R014604/1.
We would like to thank Arthur Baragar, Sergey Fomin, Ivan Izmestiev, Valentin Ovsienko and Ian Whitehead for helpful discussions.

\section{Preliminaries: Farey graph  and its symmetries}
\label{Preliminaries}
In this section we recall some classic notions and definitions.
In particular we recall the definitions of Farey graphs,  Farey addition, Ford cycles, and Farey tessellation of 
the hyperbolic plane. We also discuss the group of symmetries of the Farey tessellation.

\smallskip
\noindent
{\bf Notation. }
Throughout the paper we denote $\CC=\C\cup \infty$. We also use the notations $\RR$, $\QQ$, $\ZZ$, $\ZZ[\sigma]$ for the sets
$\R\cup \infty$, $\Q\cup \infty$, $\Z\cup \infty$,  and $\Z[\sigma]\cup \infty$
respectively.

\subsection{Farey graph}

\begin{definition}
The {\it Farey graph} $\mathcal F$ is an infinite graph whose vertices are $\QQ$;
two vertices $u,v\in\QQ$ with irreducible fractions $p/q$ and $r/s$ respectively are
connected by an edge if and only if $|ps-rq|=1$.
\end{definition}

It is convenient to visualise the Farey graph $\mathcal F$ by drawing it in the upper half-plane model
of the hyperbolic plane $\H^2$.
Namely, we identify $\H^2$ with $\{z\in \C \ | \ \mathrm{Im(z)}>0\}\subset \CC$; its boundary is therefore identified with $\RR$.
Then the vertices of the graph $\mathcal F$  are naturally identified with rational points of the absolute;
for each edge $uv$ in the graph $\mathcal F$ we draw a  hyperbolic line connecting $u$ and $v$ (i.e.
a semicircle with endpoints at $u$ and $v$), see Figure~\ref{farey}.

\begin{figure}[!h]
\begin{center}
 \includegraphics[width=0.99\linewidth]{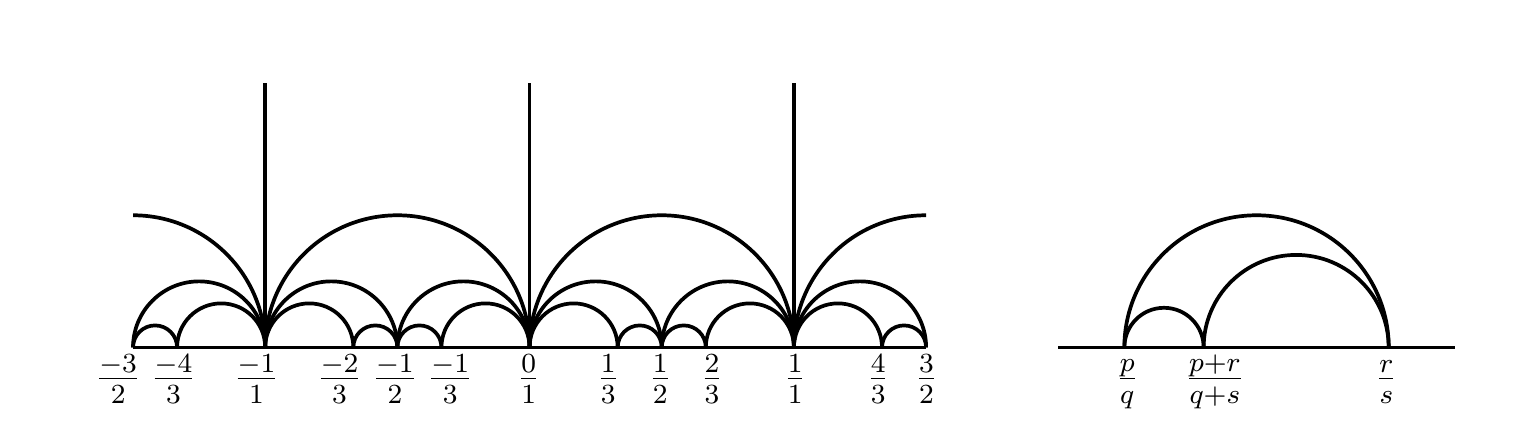}     
\caption{Farey graph}
\label{farey}
\end{center}
\end{figure}

Observe that the geodesic semicircles representing any two distinct edges of the graph do not intersect each other in $\H^2$,
however they might have a common vertex on the absolute.
Moreover, the obtained diagram is a tessellation of $\H^2$ by ideal triangles (i.e. geodesic triangles with vertices at the absolute); we call it {\it Farey tessellation}.

\subsection{Farey addition in $\QQ$ and ideal triangles in the tessellation}
\label{sec_Farey_add}
Given an edge connecting $p/q$ with $r/s$ in the Farey graph, the third vertex of the triangle lying right below this edge is given by the {\it Farey addition}, i.e. $$\frac{p}{q}\oplus\frac{r}{s}=\frac{p+r}{q+s},$$
see Figure~\ref{farey}, right.
One can check that the third vertex of the triangle lying right above the edge can be written by  $\frac{p}{q}\ominus\frac{r}{s}=\frac{p-r}{q-s}$, and that in terms of hyperbolic geometry the two triangles can be obtained from each other by applying a reflection with respect  the hyperbolic line connecting  $p/q$ with $r/s$.
Note that
$
u\ominus v=v\ominus u.
$

\subsection{Symmetry group of $\mathcal F$}
Recall that the group $PSL_2(\Z)$ naturally acts on the upper half-plane, here
for every $\begin{pmatrix} a&b\\ c&d\end{pmatrix}\in \SL_2(\Z)$
we consider the mapping $z\to \frac{az+b}{cz+d}$.
This action preserves the value  $|ps-rq|$ for every two irreducible fractions $p/q,r/s\in \QQ$.
Therefore, every element of  $PSL_2(\Z)$ takes the Farey graph to itself.
Thus, we can identify $PSL_2(\Z)$ with a subgroup of symmetries  of $\mathcal F$.

\vspace{2mm}

In fact, $PSL_2(\Z)$ is the group of orientation-preserving symmetries of $\mathcal F$. This group is an index 2 subgroup of the  group of all symmetries of $\mathcal F$, the latter  is generated by reflections with respect to the the sides of the ideal triangle with vertices $0,1,\infty$.
The group $PSL_2 (\Z)$ acts transitively on the vertices of $\mathcal F$, on the edges of $\mathcal F$, and on the triangles in the tessellation.

\subsection{Ford circles}
Consider the Euclidean circle of (Euclidean) radius $1/2$ tangent to the real axis at the point $0$ (in terms of hyperbolic geometry it represents a horocycle). The action of $PSL_2(\Z)$ takes this circle to infinitely many circles tangent to the real axis at each rational point, see Figure~\ref{ford}. All such circles are called {\it Ford circles}.
Observe that for every pair of  rational points connected by an edge in the Farey graph the corresponding Ford circles
are tangent (while all other circles in the family are disjoint).

\begin{figure}[!h]
\begin{center}
 \includegraphics[width=0.6\linewidth]{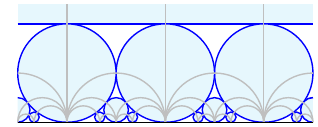}       
\caption{Ford circles}
\label{ford}
\end{center}
\end{figure}

\begin{remark}
  Consider Ford circles as horocycles at every vertex of the Farey graph. Then, given two points  $u,v\in \QQ$,  where  $u=p/q, v=r/s$ provides the relation between the determinant $\begin{pmatrix}p&r\\q&s \end{pmatrix}$
  and Penner's $\lambda$-length $\lambda_{u,v}$, see Remark~\ref{rem:lambda=det}.

\end{remark}

\section{3D Farey graph: tetrahedral graph $\mathcal T$}
\label{3D Farey graph: tetrahedral graph}

Recall that we denote $\sigma=e^{i\pi/3}=\frac{1+i\sqrt 3}{2}$.
Let  $\Q(\sigma)$ be the  field of fractions of the ring  $\Z[\sigma]$.

\subsection{Tetrahedral graph  $\mathcal T$ and its symmetries}
Consider a regular ideal tetrahedron $T\subset \H^3$ with vertices  $(0,1, \sigma, \infty)$.
Let $H$ be the group generated by reflections with respect to the faces of $T$.

\vspace{2mm}

The group $H$ acts discretely on $\H^3$ and $T$ is a fundamental domain for the action. Throughout the paper we call all images of $T$ under the action of $H$ {\em fundamental tetrahedra}.

\begin{definition}[Tetrahedral graph, tetrahedral tiling]
\label{tetrahedral_graph}
Denote by $\mathcal T$ the tiling of $\H^3$ by tetrahedra $hT$, $h\in H$, and denote by $\mathcal T_0$,
$\mathcal T_1$, $\mathcal T_2$, and  $\mathcal T_3$,
 the sets of all vertices, edges, faces, and tetrahedra themselves of the tetrahedra in the tiling respectively.
\end{definition}

The following proposition is due to Bianchi~\cite{B}.

\begin{prop}
\label{prop_Bi}  
The set of vertices of $\mathcal T$ is $\QQ(\sigma)$. The symmetry group  of $\mathcal T$ is the Bianchi group $Bi(3)=\PGL_2(\Z[\sigma])\rtimes \langle \tau \rangle$, where $\tau$ is the complex conjugation. The group  $\PGL_2(\Z[\sigma])$ of orientation-preserving symmetries of $\mathcal T$  acts transitively  on $\mathcal T_0$ and  $\mathcal T_1$.

\end{prop}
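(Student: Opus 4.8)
The plan is to work throughout in the upper half-space model $\H^3=\{(z,t):z\in\C,\ t>0\}$, whose ideal boundary is $\CC$ and whose full isometry group is $PSL_2(\C)\rtimes\langle\tau\rangle$: here $PSL_2(\C)$ acts by the Poincar\'e extensions of the M\"obius maps $z\mapsto(az+b)/(cz+d)$, and $\tau$ acts as complex conjugation $z\mapsto\bar z$. The one genuinely geometric input I would isolate first is that the regular ideal tetrahedron $T$ has all six dihedral angles equal to $\pi/3$. Since $\pi/3$ is a submultiple of $\pi$, Poincar\'e's polyhedron theorem applies to the reflections in the four faces of $T$ and yields the backbone of the statement: $H$ is discrete, $T$ is a strict fundamental domain, exactly six tetrahedra close up around each edge (as $6\cdot\pi/3=2\pi$), and $H$ permutes the tetrahedra $hT$ simply transitively. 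In particular $\mathcal T$ is an honest tessellation. This is, in essence, Bianchi's theorem, and verifying these hypotheses is the main obstacle; everything afterwards is bookkeeping.

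Next I would locate the reflection group inside $Bi(3)=\PGL_2(\Z[\sigma])\rtimes\langle\tau\rangle$. The face of $T$ spanned by $0,1,\infty$ lies in the vertical plane over $\R$, so its reflection is exactly $\tau$; the reflection in the face $0,\sigma,\infty$ is $z\mapsto\sigma^2\bar z$, and the remaining two face reflections (one more vertical plane, and the hemisphere through $0,1,\sigma$) are likewise orientation-reversing maps of the form $(\text{M\"obius})\circ\tau$ whose matrices, after clearing denominators, lie in $GL_2(\Z[\sigma])$; here one uses $\bar\sigma=1-\sigma\in\Z[\sigma]$, so that $\tau$ preserves $\Q(\sigma)$. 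Thus $H\le Bi(3)$. Since the vertices of $\mathcal T$ are precisely $H\cdot\{0,1,\sigma,\infty\}$ and $Bi(3)$ preserves $\QQ(\sigma)$, this already gives $\mathcal T_0\subseteq\QQ(\sigma)$.

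For the reverse inclusions and the symmetry-group identification I would argue in both directions. First, $Bi(3)\subseteq\mathrm{Sym}(\mathcal T)$: because $\Z[\sigma]$ is Euclidean, $\PGL_2(\Z[\sigma])$ is generated by the translations $z\mapsto z+1$, $z\mapsto z+\sigma$, the inversion $z\mapsto-1/z$, and the scaling $z\mapsto\sigma z$, and each of these visibly preserves $\mathcal T$ (the translations and the scaling preserve the triangular lattice $\Z[\sigma]$ that forms the link of $\infty$, while the inversion interchanges $0$ and $\infty$); together with $\tau\in H$ this shows $Bi(3)\le\mathrm{Sym}(\mathcal T)$. Conversely, by the simple transitivity of $H$ on tetrahedra from the first paragraph, every symmetry factors as $\mathrm{Sym}(\mathcal T)=H\cdot\mathrm{Stab}(T)$; and $\mathrm{Stab}(T)$ realises the symmetries of the regular ideal tetrahedron, i.e. the full group $S_4$ of permutations of $0,1,\sigma,\infty$, each of which is induced by an element of $Bi(3)$. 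Hence $\mathrm{Sym}(\mathcal T)\le Bi(3)$, and combining the two inclusions gives $\mathrm{Sym}(\mathcal T)=Bi(3)$, with $\PGL_2(\Z[\sigma])$ its orientation-preserving part.

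Finally, the transitivity statements follow from the Euclidean property of $\Z[\sigma]$. Writing any point of $\Q(\sigma)$ as an irreducible fraction $p/q$, a Bézout identity gives $a,b\in\Z[\sigma]$ with $pb-qa$ a unit, so $\left(\begin{smallmatrix}p&a\\ q&b\end{smallmatrix}\right)$ defines an element of $\PGL_2(\Z[\sigma])$ carrying $\infty$ to $p/q$; thus $\PGL_2(\Z[\sigma])$ acts transitively on $\QQ(\sigma)$. Since $\PGL_2(\Z[\sigma])\le\mathrm{Sym}(\mathcal T)$ and $\infty\in\mathcal T_0$, this both completes the equality $\mathcal T_0=\QQ(\sigma)$ and proves transitivity on $\mathcal T_0$. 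For $\mathcal T_1$ I would move one endpoint of a given edge to $\infty$ by transitivity on vertices, and then use that the stabiliser of $\infty$, consisting of the affine maps $z\mapsto uz+b$ with $u$ a unit and $b\in\Z[\sigma]$, acts transitively on the neighbours of $\infty$ --- these being exactly the lattice $\Z[\sigma]$ --- whence transitivity on edges. The one point to watch throughout is the passage from $PSL_2(\Z[\sigma])$ to $\PGL_2(\Z[\sigma])$: one must record the extra orientation-preserving symmetry $z\mapsto\sigma z$ coming from the non-square unit $\sigma$, which is what makes the orientation-preserving symmetry group all of $\PGL_2$ rather than just the Bianchi group $PSL_2(\Z[\sigma])$.
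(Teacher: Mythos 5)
The paper does not actually prove this proposition: it is stated with the attribution ``due to Bianchi'' and used as a black box throughout, so your write-up is necessarily a different route --- namely, a self-contained derivation from Poincar\'e's polyhedron theorem. The overall architecture is correct and is the standard one: dihedral angles $\pi/3$ give discreteness of $H$ and simple transitivity on tiles; the factorisation $\mathrm{Sym}(\mathcal T)=H\cdot\mathrm{Stab}(T)$ with $\mathrm{Stab}(T)\cong S_4$ pins down the symmetry group; B\'ezout in the Euclidean ring $\Z[\sigma]$ gives transitivity on vertices, and the affine stabiliser of $\infty$ gives transitivity on edges. What the citation buys the paper is brevity; what your argument buys is that every later use of the proposition (Propositions 3.6, 3.9, 3.11) rests on something checkable. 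Three points deserve more care. First, Poincar\'e's theorem for a polyhedron with ideal vertices also requires the cusp condition: the link of each ideal vertex of $T$ is a Euclidean $(\pi/3,\pi/3,\pi/3)$ triangle, which is a Euclidean Coxeter simplex, so the horospherical cross-sections do tile --- this should be said, since it is part of ``verifying the hypotheses''. Second, ``visibly preserves $\mathcal T$'' is doing hidden work: preserving the lattice $\Z[\sigma]$ in the link of $\infty$ does not by itself imply preserving the three-dimensional tessellation. The clean criterion is that an isometry $g$ preserves the orbit tessellation $H\cdot T$ as soon as $gT$ is a tile, because then $gHg^{-1}$ is generated by the face reflections of the tile $gT$, hence equals $H$; one then checks directly that each of your four generators sends $T=(0,1,\sigma,\infty)$ to a tile (e.g.\ $z\mapsto -1/z$ sends it to $(0,\sigma^2,-1,\infty)$, a rotate of $T$ about the edge $0\infty$). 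Third, for the hemispherical face $(0,1,\sigma)$ the reflection composed with $\tau$ has the na\"ive matrix $\left(\begin{smallmatrix}1+\sigma&0\\3&-(2-\sigma)\end{smallmatrix}\right)$ of determinant $-3$, which is not a unit; one must factor out the ramified prime $1+\sigma$, using $(1+\sigma)^2=3\sigma$ and $2-\sigma=(1+\sigma)\bar\sigma$, to land in $GL_2(\Z[\sigma])$ --- so the operation is removing a common factor, not ``clearing denominators''. None of these is a fatal gap, but each is exactly the kind of step a referee would ask you to write out.
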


\subsection{Edges in $\mathcal T$ and det-length}
\label{sec-det}

First, we recall the notion of irreducible fractions.

\begin{definition}[Irreducible fractions in $\QQ(\sigma)$]
  A {\em fraction} is an expression $\frac{p}{q}$, $p,q\in \Z[\sigma]$.
  A fraction $\frac{p}{q}$ is {\it irreducible} if for any $k\in \Z[\sigma]$ such that $p=kp',q=kq'$ with $p',q'\in \Z[\sigma]$ one has $|k|=1$.

\end{definition}

Note that the ring $\Z[\sigma]$ has six units, i.e. the six powers of $\sigma$.
Thus, if the fraction $p/q\in \QQ(\sigma)$ is irreducible, then
the other five irreducible fractions  $p\sigma^{i}/q\sigma^{i}$  for $i=1,\ldots, 5$ represent the same element of $\QQ(\sigma)$.

\vspace{2mm}

\begin{remark}  
  \label{irreducible}
It is an easy observation that $\PGL_2(\Z\[\sigma\])$ takes irreducible fractions to irreducible ones.
\end{remark}

\begin{definition}[Det-length]
 \label{det-length}  
Given  two irreducible fractions $z_i=\frac{p_i}{q_i}$, $p_i,q_i\in \Z[\sigma]$, $i=1,2$, define {\it det-length} $l(z_1,z_2)$ by
$$l(z_1,z_2)=|\det \begin{pmatrix} p_1& p_2\\q_1&q_2 \end{pmatrix}|.
$$

\end{definition}

\begin{remark}
 Once restricted to the real line, the $\det$-length coinsides with the integer trigonometric function of sine (see, e.g.,~\cite{K}).
\end{remark}
 
Although the following proposition is well known, we give a proof for completeness.

\begin{prop}
  \label{det-l}
  The group $\PGL_2(\Z[\sigma])$ preserves the det-length $l$.

\end{prop}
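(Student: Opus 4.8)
The plan is to reduce the statement to the multiplicativity of the determinant together with the fact that every unit of $\Z[\sigma]$ has absolute value $1$. First I would fix an element $M=\begin{pmatrix} a & b \\ c & d\end{pmatrix}\in \PGL_2(\Z[\sigma])$, choosing a representative with entries in $\Z[\sigma]$ and $\det M\in\Z[\sigma]^\times$, and recall that its action on $\QQ(\sigma)$ sends a fraction $z=p/q$, viewed as a column vector with entries $p$ and $q$, to $M(z)=(ap+bq)/(cp+dq)$. By Remark~\ref{irreducible}, $M$ carries irreducible fractions to irreducible ones, so the det-length of the images is well-defined and there is something to compute.

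Next I would write out the key matrix identity. Given two irreducible fractions $z_i=p_i/q_i$, the numerators and denominators of their images assemble into
$$
\begin{pmatrix} ap_1+bq_1 & ap_2+bq_2 \\ cp_1+dq_1 & cp_2+dq_2 \end{pmatrix}
= \begin{pmatrix} a & b \\ c & d \end{pmatrix}\begin{pmatrix} p_1 & p_2 \\ q_1 & q_2 \end{pmatrix}.
$$
Taking determinants and then absolute values, multiplicativity of $\det$ yields
$$
l\bigl(M(z_1),M(z_2)\bigr)=|\det M|\cdot l(z_1,z_2).
$$
It then remains only to observe that $|\det M|=1$: since $\det M$ is a unit of $\Z[\sigma]$, it is one of the six powers of $\sigma=e^{i\pi/3}$, all of which lie on the unit circle. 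Hence $|\det M|=1$ and $l$ is preserved.

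The computation itself is routine, so the one point I would be careful to address is the well-definedness of $l$, which is really the crux of the matter. An irreducible fraction is only specified up to multiplication of its numerator and denominator by a unit, and an element of $\PGL_2(\Z[\sigma])$ is only specified up to a central scalar; each of these ambiguities multiplies the relevant $2\times 2$ determinant by a product of units of $\Z[\sigma]$. Because every such unit has modulus $1$, none of these choices affects the absolute value of the determinant, so both the definition of $l$ and the identity above are independent of all representatives chosen. This is precisely why the definition of det-length takes the absolute value, and it is the same mechanism that forces invariance under $\PGL_2(\Z[\sigma])$.
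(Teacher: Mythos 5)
Your proof is correct and follows essentially the same route as the paper's: express the images as a matrix product, apply multiplicativity of the determinant, and use that $\det M$ is a unit of $\Z[\sigma]$ and hence has modulus $1$. The extra care you take over well-definedness (independence of the chosen representatives of the fractions and of $M$) is a sound addition but does not change the argument.
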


\begin{proof}
Let  an element of $\PGL_2(\Z[\sigma])$ be represented by $M= \begin{pmatrix} a&b\\c&d \end{pmatrix}\in GL_2(\Z[\sigma])$.  Let $p/q$ and $m/n$ be two irreducible fractions, $p,q,m,n\in \Z[\sigma]$. Then
 \begin{multline*}
 l(M \begin{pmatrix} p\\q \end{pmatrix},M \begin{pmatrix} m\\n \end{pmatrix})=
  |\det  \begin{pmatrix} ap+bq&am+bn\\cp+dq&cm+dn \end{pmatrix}|=\\
  |\det \begin{pmatrix} a&b\\c&d \end{pmatrix}||\det \begin{pmatrix} p&m\\q&n \end{pmatrix}|=1\cdot l(\begin{pmatrix} p\\q \end{pmatrix}, \begin{pmatrix} m\\n \end{pmatrix}),
  \end{multline*}
which shows that the det-length is preserved.

\end{proof}

 Now, we give a  complete description of the edges in $\mathcal T$.

\begin{prop}
\label{edges}
Let $\frac{p_i}{q_i}$, $p_i,q_i\in\Z[\sigma] $, $i=1,2$, be two irreducible fractions. Then the line connecting $\frac{p_1}{q_1}$ to  $\frac{p_2}{q_2}$ is an edge of some tetrahedron $hT$, $h\in H$, if and only if
$l(\frac{p_1}{q_1},\frac{p_2}{q_2})=1.$

\end{prop}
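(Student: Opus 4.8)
The plan is to prove Proposition~\ref{edges} by reducing to the single distinguished edge of the fundamental tetrahedron $T$ and then transporting the statement by the symmetry group, using the det-length invariance already established in Proposition~\ref{det-l}.

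First I would establish the ``only if'' direction. Suppose the line connecting $\frac{p_1}{q_1}$ and $\frac{p_2}{q_2}$ is an edge of some tetrahedron $hT$. By Proposition~\ref{prop_Bi}, the group $\PGL_2(\Z[\sigma])$ acts transitively on the set of edges $\mathcal T_1$, so there is an element $M\in\PGL_2(\Z[\sigma])$ carrying this edge to a fixed reference edge of $T$ itself. The tetrahedron $T$ has vertices $0,1,\sigma,\infty$; I would pick the edge from $0=\frac{0}{1}$ to $\infty=\frac{1}{0}$, for which the det-length is $\left|\det\bigl(\begin{smallmatrix}0&1\\1&0\end{smallmatrix}\bigr)\right|=1$. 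Since $M$ takes irreducible fractions to irreducible ones (Remark~\ref{irreducible}) and preserves det-length (Proposition~\ref{det-l}), we get $l(\frac{p_1}{q_1},\frac{p_2}{q_2})=1$.

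Conversely, for the ``if'' direction, suppose $l(\frac{p_1}{q_1},\frac{p_2}{q_2})=1$, i.e.\ $|p_1q_2-p_2q_1|=1$. Because $\Z[\sigma]$ has exactly six units and the determinant is a unit, I can rescale the representatives by a unit so that $p_1q_2-p_2q_1=1$ exactly; then the matrix $\bigl(\begin{smallmatrix}p_1&p_2\\q_1&q_2\end{smallmatrix}\bigr)$ lies in $SL_2(\Z[\sigma])\subset\PGL_2(\Z[\sigma])$. This matrix sends the reference vertices $\frac{1}{0}=\infty$ and $\frac{0}{1}=0$ to $\frac{p_1}{q_1}$ and $\frac{p_2}{q_2}$ respectively, hence it carries the reference edge $(0,\infty)$ of $T$ onto the line joining $\frac{p_1}{q_1}$ and $\frac{p_2}{q_2}$. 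Since symmetries in $\PGL_2(\Z[\sigma])$ map $\mathcal T$ to itself and permute its edges, the image is again an edge of $\mathcal T$, i.e.\ an edge of some $hT$.

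The main obstacle I anticipate is the bookkeeping around units: I must be careful that ``irreducible'' fractions are only well defined up to multiplication of numerator and denominator by one of the six powers of $\sigma$, and that choosing the unit to normalise the determinant to $+1$ is compatible with the $\PGL_2$ (rather than $SL_2$) framework. Concretely, the determinant $p_1q_2-p_2q_1$ is a priori only determined up to a unit once we fix the fractions, so I would verify that its \emph{absolute value} being $1$ is the invariant quantity and that some choice of representatives realises an honest element of $GL_2(\Z[\sigma])$ with unit determinant, which suffices for membership in $\PGL_2(\Z[\sigma])$. Apart from this, both directions are immediate consequences of transitivity on $\mathcal T_1$ together with the invariance of $l$.
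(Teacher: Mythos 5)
Your proof is correct, but it routes around the paper's argument in a way worth noting. The paper reduces only to the case where one endpoint is $\infty$ (using transitivity of $\PGL_2(\Z[\sigma])$ on vertices plus invariance of $l$), and then handles \emph{both} directions of that base case at once by a direct geometric inspection: the determinant with $\infty=1/0$ equals $|q_2|$, which is a unit iff $p_2/q_2\in\Z[\sigma]$, while one ``sees directly from the tiling'' that the neighbours of $\infty$ are exactly the points of $\Z[\sigma]$. You instead split the two directions: for ``only if'' you invoke the transitivity of $\PGL_2(\Z[\sigma])$ on the edge set $\mathcal T_1$ (a stronger part of Proposition~\ref{prop_Bi} than the paper uses here) to carry the edge onto $(0,\infty)$ and apply Proposition~\ref{det-l}; for ``if'' you normalise the determinant to $1$ by a unit and observe that $\bigl(\begin{smallmatrix}p_1&p_2\\q_1&q_2\end{smallmatrix}\bigr)\in\SL_2(\Z[\sigma])$ is a symmetry of $\mathcal T$ sending the edge $(\infty,0)$ of $T$ to the desired geodesic. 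The trade-off: your version is more algebraic and entirely avoids the geometric claim about the link of $\infty$ in the tiling, but it leans on edge-transitivity, whereas the paper gets by with vertex-transitivity at the price of one explicit geometric observation. Two small points to make airtight: you should note that in $\Z[\sigma]$ an element of absolute value $1$ is necessarily one of the six units (this is what lets you rescale the determinant to $+1$), and that ``$\SL_2(\Z[\sigma])\subset\PGL_2(\Z[\sigma])$'' should be read as the image under the natural projection; neither affects the validity of the argument.
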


\begin{proof}
  First, suppose that $p_1/q_1=\infty$. As $p_1/q_1$ is irreducible, we have $q_1=0$ and $p_1= \sigma^k $ for $k\in\{0,1,\dots,5\}$. So,
$$ |\det  \begin{pmatrix} p_1& p_2\\q_1&q_2 \end{pmatrix}|=|\det  \begin{pmatrix} 1& p_2\\0&q_2 \end{pmatrix}|=|q_2|,
$$
which implies that the determinant is a unit if and only if $p_2/q_2\in \Z[\sigma]$.
On the other hand,
one can see directly from the tiling of $\H^3$ by ideal tetrahedra that  the points in $\QQ(\sigma)$ connected to $\infty$ by an  edge of some tetrahedron $hT$, $h\in H$, are precisely ones lying in $\Z[\sigma]$.

Next, consider arbitrary irreducible fractions $p_1/q_1$ and $p_2/q_2$. According to Proposition~\ref{prop_Bi}, there exists  an element $f\in \PGL_2(\Z[\sigma])$ taking $p_1/q_1\in\QQ(\sigma)$ to $\infty$. 
In view of Remark~\ref{irreducible}, the consideration above implies that the statement holds for $f(p_1/q_1)$ and $f(p_2/q_2)$. Since $f$ preserves the det-length (Proposition~\ref{det-l}), we conclude that the statement holds for $p_1/q_1$ and  $p_2/q_2$ as well.

\end{proof}

\begin{remark}
  \label{arrangement}
  Consider the hyperbolic plane $\Pi$ in $\H^3$ containing the line ${\mathrm {Im}}  (z)=0$, and let $\mathcal F$ be the classical Farey graph lying in $\Pi$. Propositions~\ref{det-l} and~\ref{edges} imply that $\mathcal T$ can also be constructed as the orbit of $\mathcal F$ under the action of the Bianchi group $Bi(3)$ (this follows closely the construction of {\em Schmidt arrangement} for any imaginary quadratic field, see~\cite{St}). Based on that, the description of faces and fundamental tetrahedra of $\mathcal T$ (up to the action of the Bianchi group) can be deduced from~\cite{Sch}. In the rest of this section we give an explicit description of   faces and fundamental tetrahedra of $\mathcal T$  in terms of {\em Farey addition}.    

  \end{remark}

\subsection{Faces in $\mathcal T$ and  symmetric Farey addition}
\label{sec-faces}
\subsubsection{Symmetric Farey addition in $\mathcal T$}

As we have already mentioned in Section~\ref{sec_Farey_add}, 
for the adjacent vertices $\frac{p}{q},\frac{r}{s}\in \QQ$ of the classical Farey graph
the third  point adjacent to both (and lying between them) is given by the Farey addition $\frac{p}{q}\oplus \frac{r}{s}= \frac{p+ r}{q+ s}$. The other vertex of the Farey graph adjacent to both  $\frac{p}{q}$ and $\frac{r}{s}$ is given by  $\frac{p}{q}\ominus \frac{r}{s}=\frac{p- r}{q- s}$. 

\vspace{2mm}

Define  the {\it symmetric Farey sum} of  $\frac{p}{q},\frac{r}{s}\in \QQ$ as
 $$
 \frac{p}{q}\oslash \frac{r}{s}= \Big\{ \frac{p+ r}{q+ s},  \frac{p- r}{q- s}  \Big\}.
 $$
 The symmetric Farey sum provides the set of all vertices that form all fundamental triangles with the given edge joining  $\frac{p}{q}$ and $\frac{r}{s}$.
 
A symmetric Farey sum has a natural generalisation to $\QQ(\sigma)$.

\begin{definition}
Let $p/q$ and $r/s$ be irreducible fractions in $\QQ(\sigma)$.
The {\it symmetric Farey sum} of $p/q$ and $r/s$ is the following set
$$
\frac{p}{q}\oslash \frac{r}{s}= \Big\{ \frac{p+\sigma^i r}{q+\sigma^i s} \in  \QQ(\sigma)\mid  i=0,1,2,3,4,5\Big\}.
$$

\end{definition}

\begin{remark}
Note that the resulting set does not depend on the choice of irreducible fractions representing  $p/q$ and $r/s$.
The definition may look asymmetric (the second summand multiplied by $\sigma^i$ while the first is not), however, it is symmetric in   $\QQ(\sigma)$.

\end{remark}

\subsubsection{Faces in  $\mathcal T$}

By the construction, all the faces in $\mathcal T$ are triangles. Below, we describe them  in terms of 
symmetric Farey summation.

\begin{prop}
\label{farey triangles}
Three points  $\alpha,\beta, \gamma \in  \Q(\sigma)$ are the vertices of a triangle in the graph $\mathcal T$
 if and only if $l(\alpha,\beta)=1$ and 
$$
\gamma \in \alpha\oslash \beta.
$$
\end{prop}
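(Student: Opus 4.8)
The plan is to reduce the general statement to a single normalised case by exploiting the transitivity of the symmetry group, exactly as in the proof of Proposition~\ref{edges}. First I would observe that the condition $l(\alpha,\beta)=1$ is precisely the condition, established in Proposition~\ref{edges}, that $\alpha$ and $\beta$ span an edge of $\mathcal T$; so the assertion to be proved is that, \emph{given} an edge $\alpha\beta$, the third vertices $\gamma$ of the faces of $\mathcal T$ containing this edge are exactly the six elements of $\alpha\oslash\beta$. By Proposition~\ref{prop_Bi} the group $\PGL_2(\Z[\sigma])$ acts transitively on $\mathcal T_1$, and by Proposition~\ref{det-l} it preserves det-length; moreover, as I would check, the symmetric Farey sum is $\PGL_2(\Z[\sigma])$-equivariant in the sense that $f(p/q)\oslash f(r/s)=f(p/q\oslash r/s)$ for $f\in\PGL_2(\Z[\sigma])$ (this follows because applying the matrix to the column vectors $(p+\sigma^i r, q+\sigma^i s)^{\mathsf T}$ commutes with forming those linear combinations). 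Hence it suffices to verify the claim for one convenient representative edge.

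The natural normalisation is to move the edge $\alpha\beta$ to the edge $\infty\,0$, i.e.\ $\alpha=\infty=1/0$ and $\beta=0/1$, using a suitable $f\in\PGL_2(\Z[\sigma])$. For this edge the faces of $\mathcal T$ containing it are read off directly from the tessellation: the faces of the fundamental tetrahedra sharing the edge from $\infty$ to $0$ have as their third vertex the neighbours of both $\infty$ and $0$ in $\mathcal T$. By the argument already used in Proposition~\ref{edges}, the vertices joined to $\infty$ are exactly $\Z[\sigma]$, and among these the ones also joined to $0$ are those $\gamma=p/1$ with $l(0/1,p/1)=|{-p}|=1$, i.e.\ the six units $\gamma=\sigma^i$, $i=0,\dots,5$. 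I would then check that these six units are precisely the elements of $\tfrac10\oslash\tfrac01=\{\tfrac{0+\sigma^i\cdot1}{1+\sigma^i\cdot0}\}=\{\sigma^i\}$, matching the definition. This handles one direction; for the converse one notes that a triangle of $\mathcal T$ through the edge $\infty\,0$ must have its third vertex adjacent to both endpoints, so it must lie in this list, giving equality of the two sets rather than mere inclusion.

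The one point needing genuine care — and the step I expect to be the main obstacle — is the geometric input that around each edge of $\mathcal T$ there are \emph{exactly} six faces (equivalently, six fundamental tetrahedra), so that the count of six third-vertices is complete and no spurious faces are missed. This is the three-dimensional analogue of the two-triangle picture in the classical Farey case, and it reflects the dihedral angle $\pi/3$ of the regular ideal tetrahedron, whose reflection group $H$ therefore places six copies of $T$ around each edge. I would justify this by appealing to the structure of $H$ as the reflection group of the regular ideal tetrahedron (the dihedral angle being $\pi/3$), so that the link of an edge is tiled by six copies of the fundamental tetrahedron; combined with transitivity on $\mathcal T_1$ this shows every edge has exactly six incident faces. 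Once that is in hand, the equivariance and the explicit computation at the edge $\infty\,0$ complete the proof, since six distinct admissible third-vertices $\{\sigma^i\}$ exactly exhaust the six faces, and transporting back by $f^{-1}$ transfers the conclusion to the arbitrary edge $\alpha\beta$.
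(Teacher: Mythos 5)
Your proof is correct and follows essentially the same route as the paper's: normalise the edge to $\infty\,0$ using transitivity of $\PGL_2(\Z[\sigma])$ on edges, observe that any third vertex of a face must have det-length $1$ to both endpoints and hence be one of the six units $\sigma^i$, and identify this set of units with $\alpha\oslash\beta$. The only difference is that you explicitly justify the converse inclusion (that all six units genuinely occur as third vertices of faces) by counting the six fundamental tetrahedra around an edge via the dihedral angle $\pi/3$, a point the paper's terser proof leaves implicit in its appeal to Proposition~\ref{edges} and the tiling.
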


\begin{remark}
There are precisely 6 triangles adjacent to any edge of $\mathcal T$.
\end{remark}

\begin{proof}
Without loss of generality (keeping in mind Proposition~\ref{prop_Bi}) we set $\alpha=\frac{1}{0}$ and $\beta=\frac{0}{1}$.
Then let $\gamma=\frac{p}{q}$.

By Proposition~\ref{edges} we have $l(\alpha,\gamma)=1=l(\beta,\gamma)$, and therefore $q=\sigma^i$ and  $p=\sigma^j$ for some $i,j$.
It is clear that the sets $\{\frac{\sigma^j}{\sigma^i}\mid  i,j=0,\dots,5\}$ and $ \alpha\oslash \beta$ coincide.
\end{proof}

\subsection{Fundamental tetrahedra in $\mathcal T$}
We now describe quadruples of vertices of 
 the fundamental tetrahedra in $\mathcal T$.

 \begin{prop}
\label{prop-tetr}   
Four points  $\alpha,\beta, \gamma, \delta\in  \QQ(\sigma)$ are the vertices of one fundamental tetrahedron in $\mathcal T$
if and only if they are of the form
$$
\frac{p}{q}, \quad \frac{r}{s}, \quad \frac{p+\sigma^ir}{q+\sigma^i s}, \quad \frac{p+\sigma^{i+1}r}{q+\sigma^{i+1} s}
$$
for some $p,q,r,s$ satisfying $l(\frac{p}{q}, \frac{r}{s})=1$, where $i\in\Z$ is considered modulo $6$.
\end{prop}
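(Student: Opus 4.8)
The plan is to reduce the statement to the already-established description of faces via Proposition~\ref{farey triangles}, and then to use the transitivity of the symmetry group to normalise to a convenient configuration. First I would fix the standard fundamental tetrahedron $T$ with vertices $(0,1,\sigma,\infty)$ and note that, by Proposition~\ref{prop_Bi}, the group $\PGL_2(\Z[\sigma])$ acts transitively on $\mathcal T_1$, hence I may assume the edge $\alpha\beta$ is the edge $\frac{1}{0}\,\frac{0}{1}$ joining $\infty$ and $0$; the remaining two vertices $\gamma,\delta$ of a fundamental tetrahedron containing this edge must then each form a face with $\alpha\beta$, so by Proposition~\ref{farey triangles} they lie in $\alpha\oslash\beta=\{\sigma^j/\sigma^i\}$, i.e. they are among the six neighbours $1,\sigma,\sigma^2,\dots,\sigma^5$ of both $0$ and $\infty$.

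Second, I would identify which pairs $(\gamma,\delta)$ from these six points actually bound a fundamental tetrahedron together with $\infty$ and $0$, as opposed to merely a pair of faces sharing the edge. Geometrically the six faces around the edge $0\,\infty$ are the vertical half-planes through $0$ and $\sigma^k$, and the regular ideal tetrahedra $hT$ sharing this edge are exactly those cut out by two \emph{consecutive} such half-planes; that is, $\{\gamma,\delta\}=\{\sigma^i,\sigma^{i+1}\}$ with $i$ taken modulo $6$. This is the step where I expect the main content to sit: one must argue that consecutive (and only consecutive) pairs give a genuine tetrahedron of the tiling. I would justify this by observing that the dihedral angle of a regular ideal tetrahedron along an edge is $\pi/3$, so precisely six tetrahedra fit around each edge, their cross-sections being the six sectors between the successive half-planes; equivalently, $\{\sigma^i,\sigma^{i+1}\}$ together with $0,\infty$ is the image of $T=(0,1,\sigma,\infty)$ under a rotation fixing the edge $0\,\infty$, which lies in $\PGL_2(\Z[\sigma])$.

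Third, having established the normalised form $\{\infty,0,\sigma^i,\sigma^{i+1}\}$, I would transport it back by an arbitrary $f\in\PGL_2(\Z[\sigma])$ and check that applying the M\"obius map $z\mapsto\frac{az+b}{cz+d}$ to $\frac{p}{q}=\infty$, $\frac{r}{s}=0$ (after a suitable relabelling so that $\frac{p}{q},\frac{r}{s}$ are the two images of $\infty,0$) produces exactly the four fractions
$$
\frac{p}{q},\quad \frac{r}{s},\quad \frac{p+\sigma^i r}{q+\sigma^i s},\quad \frac{p+\sigma^{i+1}r}{q+\sigma^{i+1}s}.
$$
Indeed, the point $\sigma^i=\frac{\sigma^i}{1}$ maps to $\frac{a\sigma^i+b}{c\sigma^i+d}$, and writing $\frac{p}{q}=f(\infty)=\frac{a}{c}$ and $\frac{r}{s}=f(0)=\frac{b}{d}$ shows this equals $\frac{p+\sigma^i r}{q+\sigma^i s}$ up to the overall unit ambiguity of irreducible representatives; the condition $l(\frac{p}{q},\frac{r}{s})=1$ is automatic since $f$ preserves det-length (Proposition~\ref{det-l}) and $l(\infty,0)=1$. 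The converse direction is immediate: any quadruple of the displayed form is the $f$-image of $\{\infty,0,\sigma^i,\sigma^{i+1}\}$ for the $f$ sending $\infty\mapsto\frac pq$, $0\mapsto\frac rs$, which exists in $\PGL_2(\Z[\sigma])$ precisely because $l(\frac pq,\frac rs)=1$, and hence is a fundamental tetrahedron.

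The routine parts are the M\"obius-image computation and the bookkeeping of the six units $\sigma^i$; the only genuinely geometric input, and thus the main obstacle, is pinning down that the fundamental tetrahedra around a fixed edge correspond exactly to \emph{consecutive} pairs $\{\sigma^i,\sigma^{i+1}\}$. I would handle that either by the dihedral-angle count sketched above or, more algebraically, by exhibiting the order-six rotation about the edge $0\,\infty$ as an explicit element of $\PGL_2(\Z[\sigma])$ (namely $z\mapsto\sigma z$) and noting it cyclically permutes the six tetrahedra sharing that edge while fixing the edge, so that all six arise from $T$ and correspond to the six consecutive pairs.
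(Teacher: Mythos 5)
Your argument is correct, and its overall architecture is the same as the paper's: reduce to a standard configuration by transitivity, classify the tetrahedra there, and transport back by a M\"obius map, with the $l(\frac pq,\frac rs)=1$ condition coming for free from invariance of the det-length. The difference is in the middle step. The paper normalises a \emph{face} to $(\frac10,\frac01,\frac11)$ and pins down the fourth vertex purely algebraically: Proposition~\ref{edges} applied to the edges $\alpha\delta$, $\beta\delta$, $\gamma\delta$ forces $\delta=\sigma^{\pm1}$ via the identity $|\sigma^i-\sigma^j|=1\iff i\equiv j\pm1\pmod 6$, and then an element of $\PGL_2(\Z[\sigma])$ carries the standard face to an arbitrary one. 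You instead normalise an \emph{edge} and classify all six tetrahedra around it at once, which is a pleasant global picture, but your ``only consecutive pairs'' step imports an external geometric fact (the dihedral angle of the regular ideal tetrahedron is $\pi/3$) that the paper never establishes; note that you could avoid this entirely by applying Proposition~\ref{edges} to the edge $\sigma^i\sigma^j$ of the putative tetrahedron, exactly as the paper does, while your order-six rotation $z\mapsto\sigma z$ already handles the sufficiency direction. Your attention to the unit bookkeeping (the image of $\sigma^i$ being $\frac{p+\sigma^{-i}r}{q+\sigma^{-i}s}$ up to a unit, which still yields consecutive exponents) is a detail the paper glosses over by choosing the transporting matrix to be $\begin{pmatrix} p&\sigma^i r\\ q&\sigma^i s\end{pmatrix}$ directly.
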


\begin{proof}
First, we check  that the two fundamental tetrahedra containing the triangle  $\alpha=\frac{1}{0}$, $\beta=\frac{0}{1}$, $\gamma=\frac{1}{1}$ are exactly of the required shape.
Indeed,  the condition of  Proposition~\ref{edges} for the edges $(\alpha,\delta)$ and  $(\beta,\delta)$ imply
$\delta=\frac{\sigma^i}{\sigma^j}$. The condition for   $(\gamma,\delta)$ implies that $|\sigma^i-\sigma^j|=1$, which is satisfied if and only if $i=j\pm 1$  (modulo $6$). So, either $\delta=\frac{\sigma}{1}$ or  $\delta=\frac{1}{\sigma}$.
This means that $(\alpha,\beta,\gamma,\delta)$ coincides with either  $(\frac{1}{0},\frac{0}{1},\frac{1+0\cdot \sigma^0}{0+1\cdot \sigma^0},\frac{1+0\cdot \sigma^{-1}}{0+1\cdot \sigma^{-1}})$ or $(\frac{1}{0},\frac{0}{1},\frac{1+0\cdot \sigma^0}{0+1\cdot \sigma^0},\frac{1+0\cdot \sigma}{0+1\cdot \sigma})$.

By Proposition~\ref{farey triangles} any triangle in $\mathcal T$ can be written as
$(\frac{p}{q},  \frac{r}{s},  \frac{p+\sigma^ir}{q+\sigma^i s})$.
This triangle can be obtained from $(\frac{1}{0}, \frac{0}{1},\frac{1}{1})$ by applying a transformation
$\begin{pmatrix}
  p&\sigma^ir\\
  q&\sigma^is\\
\end{pmatrix}
\in \PGL_2(\Z[\sigma])$.
This map takes $\frac{\sigma}{1}$ and  $\frac{1}{\sigma}$ to  $\frac{p+\sigma^{i+1}r}{q+\sigma^{i+1} s}$ and  $\frac{p+\sigma^{i-1}r}{q+\sigma^{i-1} s}$ respectively.
So, we get the required statement (up to swapping $\gamma$ and $\delta$).
\end{proof}

Proposition~\ref{prop-tetr}  can be reformulated in the following way.

\begin{cor}\label{tetraT}
Four points  $\alpha,\beta, \gamma, \delta\in  \QQ(\sigma)$ are the vertices of one fundamental tetrahedron in $\mathcal T$
if and only if they are of the form
$$
\frac{p}{q},
\quad
\frac{r}{s},
\quad
\frac{p+r}{q+s},
\quad \hbox{and} \quad 
\frac{p+\sigma r}{q+ \sigma s},
$$
where $|ps-qr|=1$.
\end{cor}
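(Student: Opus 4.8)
The plan is to deduce Corollary~\ref{tetraT} directly from Proposition~\ref{prop-tetr}, since the two statements describe the same set of fundamental tetrahedra but parametrised differently. Proposition~\ref{prop-tetr} says the vertices are $\frac{p}{q},\frac{r}{s},\frac{p+\sigma^i r}{q+\sigma^i s},\frac{p+\sigma^{i+1}r}{q+\sigma^{i+1}s}$ for any $i$ modulo $6$, subject to $l(\frac{p}{q},\frac{r}{s})=1$. The corollary is precisely the special case $i=0$. So the forward direction (corollary form gives a fundamental tetrahedron) is immediate: given the four points in the corollary with $|ps-qr|=1$, set $i=0$ in Proposition~\ref{prop-tetr}.

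The substantive point is the reverse: every tetrahedron arising in Proposition~\ref{prop-tetr} with a general $i$ can be rewritten in the $i=0$ shape by absorbing the factor $\sigma^i$ into a new choice of representatives. First I would observe that $\frac{r}{s}$ and $\frac{\sigma^i r}{\sigma^i s}$ are the same element of $\QQ(\sigma)$, since $\sigma^i$ is a unit of $\Z[\sigma]$ (the remark after Definition~\ref{det-length} records that there are six units, the powers of $\sigma$). Hence I set $r'=\sigma^i r$ and $s'=\sigma^i s$, so that $\frac{r'}{s'}$ still represents the vertex $\frac{r}{s}$ and remains irreducible. Then the four vertices of Proposition~\ref{prop-tetr} become
$$
\frac{p}{q},\quad \frac{r'}{s'},\quad \frac{p+r'}{q+s'},\quad \frac{p+\sigma r'}{q+\sigma s'},
$$
which is exactly the shape claimed in the corollary. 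It only remains to check the determinant condition: $l(\frac{p}{q},\frac{r'}{s'})=|ps'-qr'|=|\sigma^i||ps-qr|=|ps-qr|=1$, since $|\sigma^i|=1$. Thus the normalisation $|ps-qr|=1$ is preserved, and the two descriptions coincide.

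The main (and only genuine) obstacle is bookkeeping about representatives: one must be careful that replacing $(r,s)$ by $(\sigma^i r,\sigma^i s)$ does not change the \emph{point} $\frac{r}{s}\in\QQ(\sigma)$, does not affect irreducibility, and leaves the vertices $\frac{p+\sigma^i r}{q+\sigma^i s}$ and $\frac{p+\sigma^{i+1}r}{q+\sigma^{i+1}s}$ literally equal (not merely projectively equal) to $\frac{p+r'}{q+s'}$ and $\frac{p+\sigma r'}{q+\sigma s'}$ — which they are, termwise. No genuine geometry is needed beyond Proposition~\ref{prop-tetr}; the corollary is a clean reparametrisation, so I would keep the argument to a few lines.
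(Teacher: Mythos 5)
Your proposal is correct and takes essentially the same route as the paper: the paper simply states Corollary~\ref{tetraT} as a reformulation of Proposition~\ref{prop-tetr} without written proof, and your substitution $(r',s')=(\sigma^i r,\sigma^i s)$ --- checking that it preserves the point of $\QQ(\sigma)$, irreducibility, and the condition $|ps-qr|=1$ --- is exactly the bookkeeping that justifies that reformulation.
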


\section{Relations on $\lambda$-lengths}
\label{Relations on lambda-lengths}

In this section we derive some relations on $\lambda$-lengths which can be considered as $3$-dimensional analogues of the Ptolemy relation. We start by recalling Penner's definition of  $\lambda$-lengths and one of the proofs of the hyperbolic version of the Ptolemy relation. Then we continue by using similar ideas in $3$-dimensional setting.

\subsection{$\lambda$-lengths and Ptolemy relation}
\label{l-length-intro}

The following definition was introduced by Penner~\cite{P}.

\begin{definition}[$\lambda$-length]
  Given two points $A,B\in \partial \H^n$ together with the choice of horoballs  $h_A$ and $h_B$ centred at $A$ and $B$ respectively, the     {\em $\lambda$-length} $\lambda_{AB}$ of the segment $AB$  is defined as $\exp(d/2)$, where $d$ is the (signed) distance between the horospheres $h_A$ and $h_B$.
Here $d=0$ when the horoballs  $h_A$ and $h_B$ are tangent and $d<0$  when the horoballs intersect non-trivially (in the latter case $d$ is defined as negative distance $d(a,b)$ between the intersection points $a=AB\cap h_A$ and $b=AB\cap h_B$  of the line $AB$ with the horoballs).

\end{definition}

The computation in the next example is a particular case of~\cite[Corollary~4.2]{P2}. 

\begin{example}[Computing $\lambda$-lengths]
  \label{z^2/2h}
  Consider the upper halfplane model of the hyperbolic plane $\H^2$ with complex coordinate $z$.

\begin{itemize}
\item[(a)]  Let $A=\infty, B=0$, let $h_A$ be given by equation  ${\mathrm {Im}}  (z)=1$ (horizontal line) and $h_B$ be given by (Euclidean) circle of radius $r$  tangent to the absolute  ${\mathrm {Im}}  (z)=0$, see Figure~\ref{inv}, left.
  Then $$d=d(i,2ri)=
  \ln\frac{1}{2r}, $$ and thus
  $$\lambda_{AB} = \exp({d/2})=
  \frac{1}{\sqrt{2r}}.$$

\item[(b)] Let $A=0$, $B=z$, let $h_A$ be given by Euclidean circle of radius $1/2$ and $h_B$ be given by Euclidean circle of radius $r$, then $\lambda_{AB}=\frac{|z|}{\sqrt{2r}}$.
This is easy to check using inversion with respect
 to the unit circle centred at the origin and applying the result of (a), see Figure~\ref{inv} middle and right.

\item[(c)] Result of (b) implies Ptolemy relation for ideal quadrilaterals in $\H^2$ (see Example~\ref{ex_Pt} below).
\end{itemize}
\end{example}

\begin{figure}[!h]
\begin{center}
 \epsfig{file=./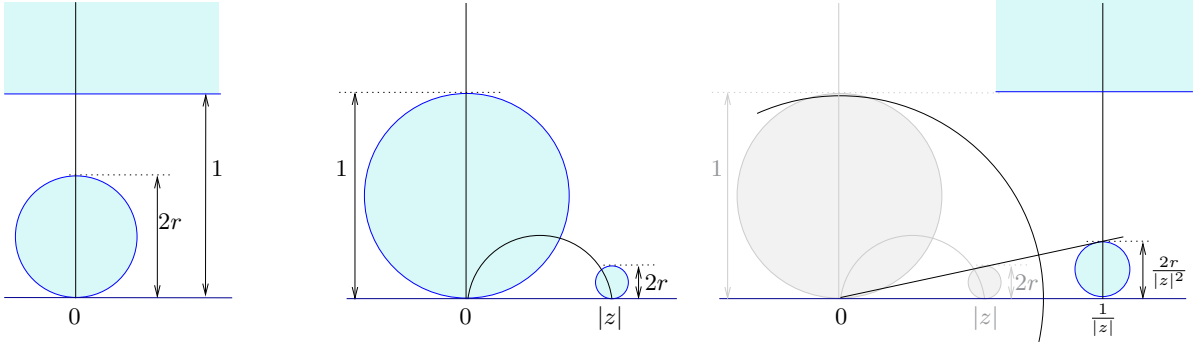,width=0.99\linewidth}
 \put(-372,63){\scriptsize $1$}
 \put(-392,43){\scriptsize $2r$}
 \put(-426,7){\scriptsize $0$}
 \put(-325,63){\scriptsize $1$}
 \put(-208,20){\scriptsize $2r$}
 \put(-278,7){\scriptsize $0$}
 \put(-226,7){\scriptsize $|z|$}
 \put(-183,63){\scriptsize {\color{Gray} $1$}}
 \put(-68,20){\scriptsize  {\color{Gray} $2r$}}
 \put(-136,7){\scriptsize $0$}
 \put(-84,7){\scriptsize \color{Gray} $|z|$}
 \put(-40,7){\scriptsize  $\frac{1}{|z|}$}
 \put(-17,25){\scriptsize  $\frac{2r}{|z|^2}$}
 \caption{To Example~\ref{z^2/2h}: (a) on the left, (b) in the middle, and inversion with respect to the unit circle on the right.}
\label{inv}
\end{center}
\end{figure}

\begin{example}[Ptolemy relation]
 \label{ex_Pt}
 It was shown by Penner in~\cite{P} that $\lambda$-lengths satisfy the  Ptolemy relation:
 
 {\it Given an ideal quadrilateral $ABCD$ $($i.e. $A,B,C,D\subset \partial \H^2$$)$ with any choice of horocycles at points $A,B,C,D$, the $\lambda$-lengths of sides and diagonals of $ABCD$  satisfy}
  \begin{equation}
    \label{Pt}
   \lambda_{AC} \lambda_{BD} =\lambda_{AB} \lambda_{CD}+ \lambda_{BC} \lambda_{AD}.
  \end{equation}

We prove this  relation in three steps:

\begin{itemize}
\item[a:] {\it Reducing to the quadrilateral  $(0,1,\infty, x)\subset \H^2$.} By using the isometry group  of $\H^2$
  we can assume that a quadrilateral $ABCD$ has vertices $0,1,\infty, x$, we will denote it by $(0,1,\infty, x)\subset \H^2$.

\item[b:] {\it Choosing the horocycles.} Notice that a choice of horocycles does not affect validity of~(\ref{Pt}).
Indeed,
 changing a horocycle at one vertex $Z\in\{A,B,C,D\}$  changes the length $d$ of every edge incident to $Z$ by the same number $\gamma$, and hence the corresponding $\lambda$-lengths are multiplied by the same number $e^{\gamma/2}$, which preserves~(\ref{Pt}) as the relation is homogeneous. In particular, we may assume that three of the horocycles are mutually tangent.

\item[c:] {\it Proof for  quadrilateral $(0,1,\infty, x)\subset \H^2$, with three mutually tangent horocycles.}
It is now sufficient to show~(\ref{Pt}) for the
 ideal quadrilateral $(0,1,\infty, x)\subset \H^2$, with $x\in \R$, $0<x<1$, and the horocycles chosen as follows (see also Figure~\ref{pt}, right). Let ${\mathrm {Im}} (z)=1$ be the horocycle at $\infty$, $|z-i/2|=1/2$ and  $|z-1-i/2|=1/2$  be the horocycles at $0$ and $1$ respectively, and  let $|z-x-ir|=r$ be the horocycle at $x$. Then the three of the horocycles are mutually tangent, so that $\lambda_{0,\infty}=\lambda_{1,\infty}=\lambda_{01}=1$. We can also compute using the result of Example~\ref{z^2/2h} (b):
  $$ \lambda_{0,x}=\frac{x}{\sqrt{2r}}, \qquad  \lambda_{1,x}=\frac{1-x}{\sqrt{2r}}, \qquad  \lambda_{\infty,x}=\frac{1}{\sqrt{2r}},
$$
so that we get $\lambda_{0,x}+\lambda_{x,1}=\lambda_{x,\infty}$, which is exactly the Ptolemy relation for the quadrilateral
$(0,1,\infty, x)$.

\end{itemize}

\end{example}

\begin{figure}[!h]
\begin{center}
  \epsfig{file=./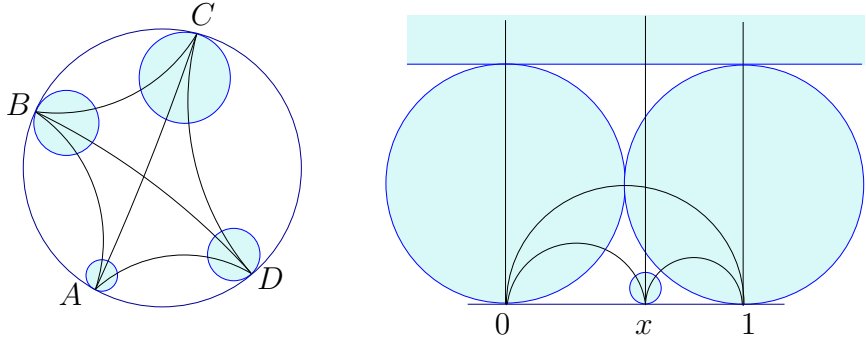,width=0.7\linewidth}
 \put(-140,-10){$0$}
 \put(-87,-10){$x$}
 \put(-47,-10){$1$}
 \put(-230,7){$D$}
 \put(-305,1){$A$}
 \put(-325,73){$B$}
 \put(-255,107){$C$}
\caption{Ptolemy Theorem and one of its proof}
\label{pt}
\end{center}
\end{figure}

\begin{remark}[$\lambda$-lengths and det-lengths]
\label{rem:lambda=det}  
One can observe that det-lengths $l(u,v)$ between points $u,v\in \QQ$ (see Definition~\ref{det-length})
satisfy Ptolemy relation (e.g. by combining results of~\cite{CC} and~\cite{MGOT}). 
Computing $\lambda$-length between points of $\QQ$ with respect to Ford circles shows that both $\lambda$-lengths and det-lengths of the sides of fundamental triangles of the Farey tessellation are equal to $1$. Applying Ptolemy relation iteratively we conclude that
$$ \lambda_{u,v}=l(u,v)
$$
for every $u,v\in \QQ$.
\end{remark}  

\subsection{Ptolemy relation in $\mathcal T$}
\label{sigma-pt}

\begin{definition}[Standard horospheres at $\mathcal T_0$: counterpart of Ford circles]
  Consider the fundamental tetrahedron $T=(0,1,\sigma,\infty)$ of $\mathcal T$, choose four pairwise tangent horospheres at its vertices (i.e. horospheres represented by three balls of radia $1/2$ and a horizontal plane at the height 1). Notice that any isometry of $T$ takes the four horospheres to the same four horospheres.
Now, we can use the action of $\PGL_2(\Z[\sigma])$ to choose a horosphere at each point of   $\mathcal T_0=\QQ(\sigma)$ (so that at every fundamental tetrahedron the four horospheres are mutually tangent). This choice of horospheres at $\QQ(\sigma)$ will be called {\it standard}.

\end{definition}

\begin{remark}
Definition of standard horospheres is equivalent to the definition of {\em Ford spheres} given by Northshield~\cite{N}.
  
  \end{remark}

In this section, we  assume by default the standard choice of horospheres at $\mathcal T_0$.
 This assumption simplifies the reasonings and does not affect the statements (similarly to step (b) in Example~\ref{ex_Pt}), as the  relations considered below are all homogeneous.
 The only exception is  Theorem~\ref{lambda=l}, where the standard choice of horospheres is essential.

\begin{theorem}
\label{1}
Let $A_1A_2A_3A_4$ be a fundamental tetrahedron with vertices in  $\QQ(\sigma)$,  choose any $X\in  \QQ(\sigma)$ distinct from $A_i$.
Let $\lambda_i=\lambda_{XA_i}$ be the $\lambda$-length of $XA_i$, $i=1,\dots,4$.
Then
\begin{equation}
\label{lambda4}
\sum\limits_{i=1}^4 \lambda_i^4=\sum\limits_{1\le i<j\le 4} \lambda_i^2\lambda_j^2.
\end{equation}

\end{theorem}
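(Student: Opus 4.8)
The plan is to reduce to a normalized configuration using the transitivity of $\PGL_2(\Z[\sigma])$ and then compute the four $\lambda$-lengths explicitly. By Proposition~\ref{prop_Bi}, the group $\PGL_2(\Z[\sigma])$ acts transitively on the fundamental tetrahedra, so I may assume $A_1A_2A_3A_4=(0,1,\sigma,\infty)$, the distinguished fundamental tetrahedron $T$. Since the standard horospheres are preserved by this action, the four horospheres at $A_1,\dots,A_4$ are then the three Euclidean balls of radius $1/2$ tangent to $\widehat{\mathbb C}$ at $0,1,\sigma$ together with the horizontal plane $\{\mathrm{Im}\, w=1\}$ (reading the second complex coordinate in the upper half-space model of $\H^3$). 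The fifth point $X=p/q$ carries its own standard horosphere, but by the homogeneity remark preceding the theorem the identity~(\ref{lambda4}) is unchanged under rescaling the horosphere at $X$, so I am free to pick any convenient ball there.

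The key computational step is a three-dimensional analogue of Example~\ref{z^2/2h}(b): for $X=w\in\mathbb C=\partial\H^3$ with a ball of Euclidean radius $r$ at $X$ and a ball of radius $1/2$ at a point $a\in\mathbb C$, the $\lambda$-length is $\lambda_{Xa}=|w-a|/\sqrt{2r}$, and for the plane at height $1$ over $\infty$ it is $\lambda_{X\infty}=1/\sqrt{2r}$. These follow exactly as in the planar case by applying a hyperbolic isometry of $\H^3$ (inversion in a sphere) to reduce to the case $X=\infty$; the signed-distance computation is formally identical because it only involves the vertical coordinate. Writing $w=p/q$, I would thus obtain, up to the common factor $1/\sqrt{2r}$,
\begin{equation*}
\lambda_1\sim |w|,\qquad \lambda_2\sim|w-1|,\qquad \lambda_3\sim|w-\sigma|,\qquad \lambda_4\sim 1 .
\end{equation*}

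Substituting these into~(\ref{lambda4}) and clearing the common factor $1/(2r)^2$, the claim becomes a purely algebraic identity in $w$: the sum of fourth powers $|w|^4+|w-1|^4+|w-\sigma|^4+1$ must equal the sum over the six pairs of the products of squared moduli. I expect the main obstacle to be organizing this verification cleanly rather than its difficulty; the identity should hold precisely because $0,1,\sigma$ are the vertices of an equilateral triangle (equivalently, because $1+\sigma+\sigma^2=\ldots$ and $|1-\sigma|=1$ encode the regular-simplex geometry), so the cross terms $\mathrm{Re}(\bar w\cdot\!(\cdots))$ should cancel by symmetry. Writing $|w-a|^2=|w|^2-2\,\mathrm{Re}(\bar a w)+|a|^2$ and expanding, I would group the terms by their degree in $w$ and use the relations among $\{0,1,\sigma\}$; the degree-four and degree-zero parts should match directly, and the mixed real parts should collapse using $1+\sigma$ and $|\sigma|=|1-\sigma|=1$. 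As the excerpt notes that a second, synthetic proof via the Soddy–Gosset theorem on five mutually tangent spheres is also available, this algebraic route is the more elementary of the two and is what I would write out first.
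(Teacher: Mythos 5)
Your proposal is correct and follows essentially the same route as the paper: reduce by an isometry to the tetrahedron $(0,1,\sigma,\infty)$, use the formula of Example~\ref{z^2/2h}(b) (which transfers verbatim to $\H^3$) to get $\lambda_i\sim|z-A_i|$ for $i=1,2,3$ and $\lambda_4\sim 1$, and then verify the resulting homogeneous algebraic identity in $z$, which the paper likewise leaves as a direct expansion using $a_i=(z-A_i)(\bar z-\bar A_i)$. The identity does check out (it reduces to $3\sum a_i^2=(\sum a_i)^2$, and the excess terms cancel precisely because $|z|^2=z\bar z$), so your plan is sound.
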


\begin{proof}
 The settings of the theorem are illustrated in Figure~\ref{ind}, left. Applying an isometry,
we may assume that $A_1A_2A_3A_4=(0,1,\sigma,\infty)$ and that $X=z\in \QQ(\sigma)$.
Let the horosphere at $z$ be represented by Euclidean sphere of radius $r/2$. Then  from Example~\ref{z^2/2h}(b) we get that
$$ \lambda_i= \frac{|z-A_i|}{\sqrt{2r}}, \  i=1,2,3, \qquad  \qquad \lambda_4=\frac{1}{\sqrt{2r}}.
$$
It is now sufficient to check that the values $a_i=|z-A_i|^2, i=1,2,3$ and $a_4=1$ satisfy the equation
$$ \sum\limits_{i=1}^4 a_i^2=\sum\limits_{1\le i<j\le 4} a_ia_j.
$$
This  is a straightforward computation by using the following substitutions: $a_1=z\bar z$, $a_2=(z-1)(\bar z-1)$, and $a_3=(z-\sigma)(\bar z-\bar \sigma)$.

\end{proof}

\subsection{\bf $\lambda$-lengths and Soddy-Gosset theorem}
As was noted to the authors by  Arthur Baragar and Ian Whitehead, 
Equation~\ref{lambda4}
can be considered as a corollary of
the Soddy-Gosset theorem  relating the radia of five mutually tangent  spheres in the Euclidean space  (see e.g.~\cite{LMW}). Given $n+2$ mutually tangent spheres in $\R^n$ of radia $r_i$, $i=1,\dots, n+2$, denote $k_i=1/r_i$. Then the Soddy-Gosset theorem  asserts that
  \begin{equation}
  \label{SG}
  (\sum\limits_{i=1}^{n+2} k_i)^2=n\sum\limits_{i=1}^{n+2} k_i^2.
\end{equation}

The plane version of this theorem (4 mutually tangent circles in $\E^2$) is called the  Descartes Circle theorem. We will use the version with $n=3$ and five spheres, one of which is of infinite radius, and hence has $k_5=0$. Equation~\ref{SG} is then reduced to
 \begin{equation}
  \label{k}
  \sum\limits_{1\le i< j\le 4} k_ik_j=\sum\limits_{i=1}^{4} k_i^2,
\end{equation}
cf. Equation~\ref{lambda4}.

Now, consider the configuration of points $A_1,A_2,A_3,A_4$ and $X$ as in Theorem~\ref{1}: here, $A_1A_2A_3A_4$ is a fundamental tetrahedron of $\mathcal T$ (or any other regular ideal tetrahedron, i.e. an image of the fundamental tetrahedron under any isometry of $\H^3$) and $X$ is any point on the absolute of $\H^3$. After applying an isometry we may assume that $X=\infty$ (we use the upper halfspace model with coordinates $\{ (z,t) \ | \ z\in \C, t\in \R_+ \}$).

Next, we choose four mutually tangent horospheres  centred at points $A_1,A_2,A_3,A_4$ (this is possible as $A_1A_2A_3A_4$ is a regular ideal tetrahedron). The four horospheres together with the plane representing the absolute play the role of the five mutually tangent spheres in the Soddy-Gosset theorem. Let $r_i$ be the radius of the horosphere centred at $A_i$. Applying the Soddy-Gosset theorem as in Equation~\ref{k}, we get
 \begin{equation}
  \label{h}
  \sum\limits_{1\le i< j\le 4} \frac{1}{r_i}\frac{1}{r_j}=\sum\limits_{i=1}^{4} \frac{1}{r_i^2}.
\end{equation}

We also choose the horosphere $h_X$ centred at $X$ as the plane given by $t=1$  and denote by $\lambda_i$ the $\lambda$-lengths of $A_iX$. In view of the computation in Example~\ref{z^2/2h}(a) we have
$$\lambda_i=\frac{1}{\sqrt{2r_i}},$$
so, replacing $1/r_i$ by $2\lambda_i^2$ in Equation~\ref{h} we get Equation~\ref{lambda4}.

\subsection{Det-lengths and $\lambda$-lengths}

In this section, we show that det-lengths between points of $\QQ(\sigma)$  coincide with $\lambda$-lengths with respect to standard horospheres.

We start by showing that det-lengths satisfy the counterpart of Relation~\ref{lambda4}.

\begin{theorem}
\label{2}
For points  $A_1A_2A_3A_4$ and $X$ as in Theorem~\ref{1}, define $l_i:=l_{XA_i}$. Then
$$
 \sum\limits_{i=1}^4 l_i^4=\sum\limits_{1\le i<j\le 4} l_i^2l_j^2.
$$

\end{theorem}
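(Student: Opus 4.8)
The plan is to mirror the proof of Theorem~\ref{1}, observing that once the tetrahedron is normalised, the det-lengths $l_i$ are simply a common scalar multiple of the $\lambda$-lengths $\lambda_i$ computed there; since the identity to be proved is homogeneous, it transfers immediately from $\lambda$-lengths to det-lengths. Concretely, I would first use that $\PGL_2(\Z[\sigma])$ preserves det-lengths (Proposition~\ref{det-l}) together with transitivity from Proposition~\ref{prop_Bi} to apply an isometry sending $A_1A_2A_3A_4$ to $(0,1,\sigma,\infty)$. Because $X$ is distinct from all $A_i$ and the only vertex carried to $\infty$ is $A_4$, the image of $X$ is a finite point $z=p/q$ with $p,q\in\Z[\sigma]$, $q\neq 0$, where $z=p/q$ is read as a complex number.

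Next I would compute the four det-lengths directly from Definition~\ref{det-length}, using the irreducible representatives $0=0/1$, $1=1/1$, $\sigma=\sigma/1$ and $\infty=1/0$, which gives $l_1=|p|$, $l_2=|p-q|$, $l_3=|p-\sigma q|$ and $l_4=|q|$. Substituting $p=qz$ yields $l_i=|q|\,|z-A_i|$ for $i=1,2,3$ and $l_4=|q|$. Comparing with the proof of Theorem~\ref{1}, where $\lambda_i=|z-A_i|/\sqrt{2r}$ for $i=1,2,3$ and $\lambda_4=1/\sqrt{2r}$, I obtain the uniform proportionality $l_i=|q|\sqrt{2r}\,\lambda_i$ for all $i=1,\dots,4$, with a single common factor independent of $i$.

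The final step is homogeneity: the relation $\sum_{i=1}^4 l_i^4=\sum_{1\le i<j\le 4} l_i^2l_j^2$ is homogeneous of degree $4$, so multiplying every $l_i$ by the common constant $|q|\sqrt{2r}$ leaves it unchanged. It therefore reduces to the corresponding identity for the $\lambda_i$, which is exactly Theorem~\ref{1}. Equivalently, one may bypass $\lambda$-lengths altogether and note that the values $a_i:=l_i^2=|q|^2|z-A_i|^2$ (with $a_4=|q|^2$) are $|q|^2$ times the quantities $|z-A_i|^2$ (and $1$) for which $\sum a_i^2=\sum_{i<j}a_ia_j$ was already verified at the end of the proof of Theorem~\ref{1} via the substitutions $a_1=z\bar z$, $a_2=(z-1)(\bar z-1)$, $a_3=(z-\sigma)(\bar z-\bar\sigma)$; homogeneity of degree $2$ in the $a_i$ then finishes the argument. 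I expect no genuine obstacle here, since the core computation is inherited from Theorem~\ref{1} and the only new inputs are det-length invariance and scale invariance of the target relation; the single point deserving care is the bookkeeping at the vertex $\infty$, where $l_4=|q|$ plays the role of the normalised value $a_4=1$.
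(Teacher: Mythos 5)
Your proposal is correct and follows essentially the same route as the paper: normalise the tetrahedron to $(0,1,\sigma,\infty)$, compute $l_1=|p|$, $l_2=|p-q|$, $l_3=|p-\sigma q|$, $l_4=|q|$, and verify the homogeneous identity. The only (harmless) difference is that you reduce to the computation already done in the proof of Theorem~\ref{1} via the proportionality $l_i^2=|q|^2 a_i$ and homogeneity, whereas the paper re-verifies the identity directly from the expressions $|p|^2=p_1^2+p_2^2+p_1p_2$ and their analogues.
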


\begin{proof}
  As in the proof of Theorem~\ref{1},  we  assume that $A_1A_2A_3A_4=(0,1,\sigma,\infty)$ and that $X=z\in \Q(\sigma)$.
  We can write  $z=\frac{p}{q}=\frac{p_1+p_2\sigma}{q_1+q_2\sigma}$, $p_1,p_2,q_1,q_2\in\Z$, and hence obtain
$$  l_1=l_{0,z}=|p|,  \qquad   l_2=l_{1,z}=|p-q|, \qquad   l_3=l_{\sigma,z}=|p-q\sigma|, \qquad   l_4=l_{\infty,z}=|q|.
$$
Taking into account that $|p|^2=p\bar p= p_1^2+p_2^2 +p_1p_2$  (and that there are similar expressions for $|q|^2,|p-q|^2$ and $|p-\sigma q|^2$), one can easily check the identity claimed in the theorem.

\end{proof}

\begin{remark}
\label{det-pt}
Theorem~\ref{2} is a  counterpart of the 2-dimensional statement mentioned in Remark~\ref{rem:lambda=det}.

\end{remark}

\begin{remark} Since the equations above are homogeneous,
  Theorem~\ref{2} is not affected when the fraction for $z$ is not irreducible.
Indeed, when we change the horosphere (resp. multiplying the numerator and denominator by the same factor), each of the summands in the sum is multiplied by the same factor.

\end{remark}

\begin{remark}
\label{two roots}
The relation in Theorems~\ref{1}  is quadratic with respect to $\lambda_4^2$. It has two positive roots, these roots have the following geometrical meaning. Let $A_4'$ be the point obtained from $A_4$ by the reflection with respect to the plane $A_1A_2A_3$ (i.e. $A_1A_2A_3A_4'$ is the fundamental tetrahedron adjacent to $A_1A_2A_3A_4$ along  $A_1A_2A_3$). We assume that $A_4'$ lies in the same half-space with respect to $A_1A_2A_3$ as $X$ while $A_4$ lies in the other half-space, see Figure~\ref{ind}, middle.

Denote $\lambda_4'=\lambda_{XA_4'}$ the corresponding $\lambda$-length. Then $\lambda_4$ and $\lambda_4'$ are the two roots of the quadratic  equation in Theorem~\ref{1} (as in Theorem~\ref{1}, we do not make any assumptions on the position of point $X$).

\end{remark}

\begin{figure}[!h]
\begin{center}
  \epsfig{file=./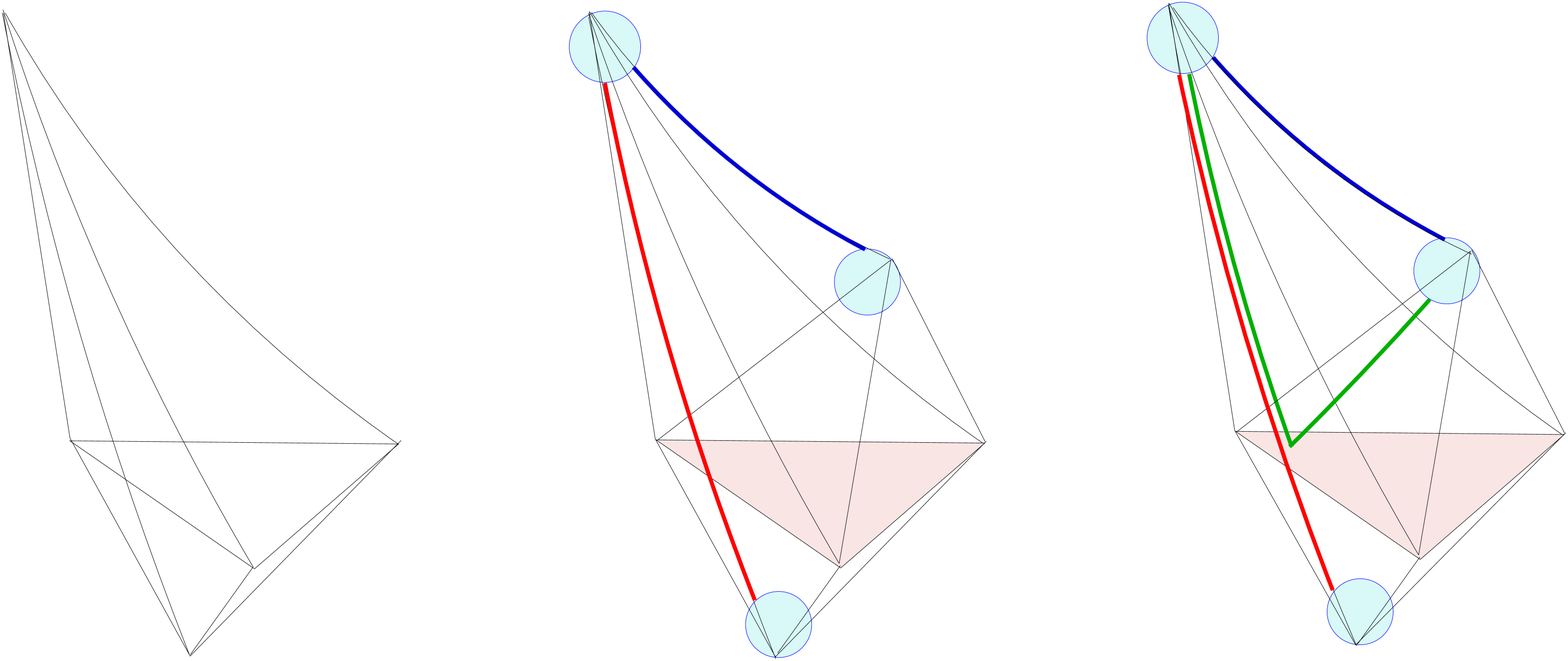,width=0.99\linewidth}
 \put(-460,183){$X$}
 \put(-295,183){$X$}
 \put(-445,60){$A_1$}
 \put(-335,60){$A_2$}
 \put(-375,20){$A_3$}
 \put(-395,-5){$A_4$}
 \put(-225,-6){$A_4=Y$}
 \put(-195,117){$A_4'=Y'$}
 \put(-77,55){$P$}
  \put(-136,183){$X_0$}
\put(-25,117){$X_{i}$}
 \put(-120,60){$X_{i+1}$}
 \put(-0,60){$X_{i+2}$}
 \put(-35,25){$X_{i+3}$}
 \put(-59,-6){$X_{i+4}$}
\put(-452,120){\color{Plum} $\lambda_1$}
 \put(-400,120){\color{Plum}$\lambda_2$}
 \put(-417,100){\color{Plum}$\lambda_3$}
 \put(-405,23){\color{Plum}$\lambda_4$}
 \put(-235,23){\color{red}$\lambda_4$}
 \put(-239,143){\color{blue}$\lambda_4'$}
\caption{Notation for Theorems~\ref{1},~\ref{lambda=l}, and~\ref{lin}}
\label{ind}
\end{center}
\end{figure}

Remark~\ref{two roots} gives rise to the following theorem.

\begin{theorem}
  \label{lambda=l}
  Let $X,Y\in \QQ(\sigma)$ be two  irreducible fractions. Let the standard horosphere be chosen at every point of $\QQ(\sigma)$.
Then $\lambda_{XY}=l_{XY}$.

\end{theorem}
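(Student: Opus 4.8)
The plan is to prove the statement by induction on the combinatorial distance between $X$ and $Y$ in the tetrahedral graph $\mathcal T$, using the two Ptolemy-type relations (Theorems~\ref{1} and~\ref{2}) that have already been established. Both $\lambda$-lengths (with respect to standard horospheres) and det-lengths satisfy the same quadratic relation
$$
\sum_{i=1}^4 f_i^4 = \sum_{1\le i<j\le 4} f_i^2 f_j^2,
$$
so if I can show they agree on a base case and propagate agreement along a path of edge-flips, they must coincide everywhere.

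First I would verify the base case: the edges of $\mathcal T$ have both $\lambda$-length and det-length equal to $1$. For det-lengths this is immediate from Proposition~\ref{edges}, and for $\lambda$-lengths it follows from the definition of standard horospheres, since along every edge of a fundamental tetrahedron the two chosen horospheres are tangent, giving signed distance $0$ and hence $\lambda = e^{0} = 1$. Next I would set up the induction. Fix $X$ and argue that any $Y \in \QQ(\sigma)$ can be reached from a neighbour $A_4$ of $X$ by a sequence of tetrahedra sharing a common face, as in the configuration of Remark~\ref{two roots}: given a fundamental tetrahedron $A_1A_2A_3A_4$ with $X$ in the opposite half-space from $A_4$, the relation of Theorem~\ref{1} is a quadratic in $\lambda_4^2$ whose two roots are $\lambda_{XA_4}$ and $\lambda_{XA_4'}$, where $A_4'$ is the reflection of $A_4$ across the face $A_1A_2A_3$. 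The crucial observation (this is where Remark~\ref{two roots} does the work) is that the coefficients of this quadratic depend only on $\lambda_1, \lambda_2, \lambda_3$, i.e.\ on the $\lambda$-lengths from $X$ to the three shared vertices.

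The inductive step then runs as follows. Suppose by induction that $\lambda_{XA_i} = l_{XA_i}$ for $i=1,2,3$ and for the closer of the two vertices $A_4, A_4'$. Theorem~\ref{1} forces $\{\lambda_{XA_4}, \lambda_{XA_4'}\}$ to be precisely the two roots of a quadratic determined by $\lambda_1^2,\lambda_2^2,\lambda_3^2$, while Theorem~\ref{2} forces $\{l_{XA_4}, l_{XA_4'}\}$ to be the roots of the \emph{same} quadratic (since $l_i = \lambda_i$ for $i=1,2,3$ by induction). Because one root coincides (the closer vertex, by induction) and the product and sum of the roots of a quadratic are determined, the remaining root must also coincide, giving $\lambda_{XA_4} = l_{XA_4}$ for the farther vertex. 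Since $\PGL_2(\Z[\sigma])$ acts transitively on $\mathcal T_1$ and both quantities are $\PGL_2(\Z[\sigma])$-invariant (Proposition~\ref{det-l} for $l$, and isometry-invariance for $\lambda$), every vertex is reachable from an edge incident to $X$ by such a chain of reflections, completing the induction.

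The main obstacle I anticipate is justifying that the two roots of the quadratic are genuinely distinct and both realised geometrically, so that ``matching one root forces the other'' is valid rather than degenerate; I would need to confirm that $A_4 \ne A_4'$ and that both produce positive values, and handle the possibility that $X$ itself is a vertex of one of the intermediate tetrahedra. A secondary technical point is ensuring the chain of face-adjacent tetrahedra connecting the base edge to an arbitrary $Y$ actually exists and stays within $\mathcal T$ — this should follow from connectivity of the tiling and the transitivity in Proposition~\ref{prop_Bi}, but it must be stated carefully so that the induction is well-founded on a genuine notion of distance. Once distinctness of roots and existence of the connecting chain are secured, the rest is the bookkeeping of Vieta's formulas applied simultaneously to the identical quadratics coming from Theorems~\ref{1} and~\ref{2}.
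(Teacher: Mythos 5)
Your central mechanism is exactly the one the paper uses for the generic case: Theorems~\ref{1} and~\ref{2} give the same quadratic in the unknown squared length, Remark~\ref{two roots} identifies its two roots as the lengths to $A_4$ and to its reflection $A_4'$, and Vieta plus the already-established value at $A_4'$ pins down the root at $A_4$. (Your worry about the roots being distinct is not actually an obstacle: if the quadratic has a repeated root the conclusion is immediate, and if not, each of $\lambda_{XA_4}$ and $l_{XA_4}$ is forced to be the root different from the common value $\lambda_{XA_4'}=l_{XA_4'}$.) However, two genuine gaps remain. First, the induction is not well-founded as you set it up: combinatorial distance in $\mathcal T$, or an arbitrary chain of face-adjacent tetrahedra, does not guarantee that the three shared vertices $A_1,A_2,A_3$ and the reflected vertex $A_4'$ are all ``closer'' to $X$ than $A_4$ is, which is precisely what the Vieta step needs. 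The paper instead inducts on the number of fundamental tetrahedra whose \emph{interiors} the geodesic $XY$ meets, takes $T_Y$ to be the tetrahedron of that chain containing $Y$, and observes that the geodesics from $X$ to the other three vertices of $T_Y$, and to the reflected vertex $Y'$, each meet strictly fewer tetrahedra; that monotonicity is the content that makes the induction run, and it has to be proved (or at least asserted) for a geodesic-based count, not a graph-theoretic one.

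Second, you omit the coplanar case. If the geodesic $XY$ lies in a hyperbolic plane containing faces of $\mathcal T$ (and this also arises unavoidably inside the general induction, since one of the auxiliary geodesics $XY_i$ may be coplanar even when $XY$ is not), the tetrahedron-counting induction degenerates: the geodesic runs along faces rather than through interiors. The paper handles this by a separate two-dimensional induction within the induced Farey triangulation of that plane, using the ordinary Ptolemy relation for the quadrilateral $X,Y_1,Y_2,Y$ (Remarks~\ref{rem:lambda=det} and~\ref{det-pt}); note that the 2D Ptolemy relation is \emph{linear} in the unknown, so no root ambiguity arises there. Your argument needs this case both as a base layer and as a fallback inside the inductive step; without it the chain of deductions can stall on a coplanar auxiliary geodesic.
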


\begin{proof}
  We consider two cases: either the geodesic $XY$  lies in a plane containing a face of a fundamental tetrahedron of $\mathcal T$ or it does not.

  First, suppose that $XY$ lies in a plane $\Pi$ containing a face of  a fundamental tetrahedron of $\mathcal T$. The tessellation $\mathcal T$ induces Farey triangulation on $\Pi$. Denote by $S$ the set  of triangles whose interior is intersected by the geodesic $XY$, let $N=|S|$. We  proceed by induction on $N$.
  If $N=0$, then  $XY$ is an edge of some triangle (and edge of $\mathcal T$)  and  $l_{XY}=1=\lambda_{XY}$  (by Proposition~\ref{edges}  and since the horospheres are mutually tangent). Otherwise, $N\ge 2$.

Let $T_Y\in S$ be the triangle with vertex $Y$. Denote by $Y_1$ and $Y_2$ the other two vertices of $T_Y$. Then the set of triangles intersected by geodesics $XY_1$ and $XY_2$ are subsets of $S$ not containing $T_Y$, so their det-lengths coincide with $\lambda$-lengths by the induction assumption.

Consider the quadrilateral with vertices $X, Y_1,Y_2,Y$.  As we have seen above, for all its edges  det-lengths coincide with $\lambda$-lengths, as well as for $Y_1Y_2$. Then both $\lambda_{XY}$ and $l_{XY}$ satisfy the same Ptolemy relation (see Remark~\ref{det-pt}), and thus coincide as well.

  Now, suppose that $XY$ does not lie in the plane of a face of any fundamental tetrahedron of $\mathcal T$, the proof is very similar to the two-dimensional case. Denote by $S$ the set  of (closed) fundamental tetrahedra intersected by $XY$ in an interior point of $\H^3$, let $N=|S|$. Again, we  proceed by induction on $N$.

  Let $T_Y\in S$ be the fundamental tetrahedron with vertex $Y$. Denote by $Y_1,Y_2,Y_3$ the other three vertices of $T_Y$. For every $i=1,2,3$ either the set of tetrahedra intersected by the geodesic $XY_i$ is a subset of $S$ not containing $T_Y$, or $XY_i$ belongs to the plane of a face of some fundamental tetrahedron. By the induction assumption (and the two-dimensional case), det-lengths of all $XY_i$ coincide with their $\lambda$-lengths.

  Consider  the triangular bipyramid with vertices $X$, $Y$ and $Y_i$. For all its edges  det-lengths coincide with $\lambda$-lengths. By Theorems~\ref{1} and~\ref{2},
  the values $\lambda_{XY}$ and $l_{XY}$ satisfy the same quadratic equation, and we are left to show that it should be the same root.

Let $T_Y'$ be  the fundamental tetrahedron adjacent to $T_Y$ along the face $Y_1Y_2Y_3$, and let $Y'$ be its fourth vertex. From  Remark~\ref{two roots}  we know that $\lambda_{XY'}$ and $l_{XY'}$ are also the roots of the same quadratic equation as  $\lambda_{XY}$ and $l_{XY}$. By the inductive assumption,   $\lambda_{XY'}=l_{XY'}$ (as $XY'$ intersect a smaller number of tetrahedra than $N$). This implies that each of
  $\lambda_{XY}$ and $l_{XY}$ is the {\it  other} root of the same quadratic equation, which implies
  $\lambda_{XY}=l_{XY}$.

\end{proof}

  \begin{remark}
Theorem~\ref{lambda=l} implies that   the set of all $\lambda$-lengths of arcs with  ends at $\QQ(\sigma)$ coincides with the set of absolute values of all elements of  $\Z[\sigma]$.

\end{remark}

\begin{remark}
  \label{radius}
    There is another, purely computational, approach to Theorem~\ref{lambda=l}. Given an irreducible fraction $p/q\in\QQ(\sigma)$, the radius of the standard horosphere at $p/q$ is equal to $1/2|q|^2$ (see e.g.~\cite{R} by Rieger). Using Example~\ref{z^2/2h}(a), one can see that the $\lambda$-length between $p/q$ and $\infty=1/0$ is equal to the corresponding det-length. Now, every point of  $\QQ(\sigma)$ can be taken to $\sigma^k/0$ by an element of $PSL_2(\Z[\sigma])$, and as both   $\lambda$-length and det-length are invariant, the result follows.   

\end{remark}

\subsection{Linear relation }

Consider a geodesic $\gamma$ connecting two points $X$ and $Y$ in $\QQ(\sigma)$. Consider all fundamental tetrahedra crossed by $\gamma$. Denote $X=X_0$, let $X_1,X_2,X_3$ the other three vertices of the first tetrahedron crossed by $\gamma$, and $X_i>3$ the vertices added on the way when tetrahedra are attached one by one along $\gamma$.

\begin{theorem}
\label{lin}
  Let $b_i=\lambda^2_{X_0X_i}$. Then
  \begin{itemize}
   \item[(a)]
     $b_i+b_{i+4}=b_{i+1}+b_{i+2}+b_{i+3}$.
   \item[(b)] $b_i\in \Z$.

   \item[(c)] $b_i<b_{i+4}$.

  \end{itemize}
\end{theorem}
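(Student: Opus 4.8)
The plan is to treat the three parts separately: (a) and (b) follow quickly from results already established, while (c) carries the genuine geometric content. For (a), the point is that $X_i,\dots,X_{i+4}$ form a bipyramid of the kind in Remark~\ref{two roots}. By construction the consecutive fundamental tetrahedra crossed by $\gamma$ are $T_i=\{X_i,X_{i+1},X_{i+2},X_{i+3}\}$ and $T_{i+1}=\{X_{i+1},X_{i+2},X_{i+3},X_{i+4}\}$; they share the face $\{X_{i+1},X_{i+2},X_{i+3}\}$, and since adjacent fundamental tetrahedra of $\mathcal T$ are exchanged by the reflection in their common face, $X_{i+4}$ is precisely the reflection of $X_i$ in the plane $\Pi_i$ through $X_{i+1},X_{i+2},X_{i+3}$. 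Applying Theorem~\ref{1} to the fundamental tetrahedron $X_{i+1}X_{i+2}X_{i+3}X_i$ with distinguished point $X_0$ and reading the resulting identity as a quadratic equation with unknown $\lambda_{X_0X_i}^2$, Remark~\ref{two roots} identifies its two roots as $b_i$ and $b_{i+4}$; Vieta's formula for their sum is exactly $b_i+b_{i+4}=b_{i+1}+b_{i+2}+b_{i+3}$, which is (a).

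For (b), Theorem~\ref{lambda=l} gives $\lambda_{X_0X_i}=l_{X_0X_i}=|ps-rq|$ for representatives $X_0=p/q$, $X_i=r/s$ with $p,q,r,s\in\Z[\sigma]$, so $b_i=|ps-rq|^2$ is the squared modulus of an Eisenstein integer. Writing $ps-rq=a+b\sigma$ with $a,b\in\Z$ and using $\sigma+\bar\sigma=\sigma\bar\sigma=1$ gives $b_i=a^2+ab+b^2\in\Z$, and $b_i\ge 1$ because $X_0\ne X_i$.

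Part (c) is where the direction of travel along $\gamma$ must enter, and I expect it to be the main obstacle: relation (a) is invariant under reversing the sequence $(b_j)$, so it alone cannot distinguish an increasing subsequence from a decreasing one, and some genuinely geometric input about orientation is needed. Using transitivity of $\PGL_2(\Z[\sigma])$ on $\QQ(\sigma)$ (Proposition~\ref{prop_Bi}), I normalise so that $X_0=\infty$, which keeps all horospheres standard; then $b_j=|q_j|^2=1/(2r_j)$, where $r_j$ is the Euclidean radius of the Ford sphere at $X_j$ (Remark~\ref{radius}, Example~\ref{z^2/2h}). Let $R_i$ be the reflection in $\Pi_i$, an element of the symmetry group of $\mathcal T$; it preserves $\lambda$-lengths and standard horospheres and interchanges $X_i$ with $X_{i+4}$. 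Hence $b_{i+4}=\lambda_{X_0X_{i+4}}^2=\lambda_{R_iX_0,\,X_i}^2$, where $R_iX_0=R_i(\infty)$ is the Euclidean centre $c_i$ of the circle $C_i=\partial\Pi_i$.

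It then remains a short computation with the Euclidean formula $\lambda_{AB}^2=|A-B|^2/(4r_Ar_B)$. Writing $\rho_i$ for the radius of $C_i$, the image under $R_i$ of the horizontal plane representing $h_{X_0}$ is the Ford sphere at $c_i$ of radius $\rho_i^2/2$, while the Ford sphere at $X_i$ has radius $1/(2b_i)$; substituting yields the clean ratio $b_{i+4}/b_i=|c_i-X_i|^2/\rho_i^2$. Because $T_i$ is crossed by $\gamma$ before the face $\Pi_i$, its free apex $X_i$ lies strictly on the unbounded ($\infty$-)side of $\Pi_i$, i.e.\ strictly outside $C_i$, so $|c_i-X_i|>\rho_i$ and therefore $b_{i+4}>b_i$. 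The one step that needs care is confirming that $X_i$ genuinely lies outside $C_i$ --- equivalently, that consecutive fundamental tetrahedra are stacked monotonically along $\gamma$ with their free apexes on opposite sides of the shared face --- which I would extract from the way $\gamma$ enters each tetrahedron through one face and leaves through another.
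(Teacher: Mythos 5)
Your proof is correct; part (a) coincides with the paper's argument (Theorem~\ref{1} plus Remark~\ref{two roots} plus Vieta), while (b) and (c) take genuinely different routes. For (b) the paper never invokes Theorem~\ref{lambda=l}: it simply iterates the recurrence (a) starting from $b_1=b_2=b_3=1$ and $b_4=3$ (the nonzero root of $x^2=3x$), which keeps the step self-contained; your route via $b_i=|ps-rq|^2=a^2+ab+b^2$ is equally valid and gives $b_i\ge 1$ for free. For (c) the paper argues synthetically: using the reflection in $X_{i+1}X_{i+2}X_{i+3}$ and the point $P=X_{i+1}X_{i+2}X_{i+3}\cap X_0X_{i+4}$, the distance from $h_{X_0}$ to $h_{X_{i+4}}$ equals the length of the broken path $h_{X_0}\to P\to h_{X_i}$, which strictly exceeds the distance from $h_{X_0}$ to $h_{X_i}$ by the triangle inequality. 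You instead conjugate $X_0$ to $\infty$ and convert the same reflection into the explicit ratio $b_{i+4}/b_i=|c_i-X_i|^2/\rho_i^2$, trading the triangle inequality for a computation with Ford-sphere radii (your radius $\rho_i^2/2$ for the image of the plane at height $1$ is right). Both arguments hinge on the identical geometric input that you correctly single out as the delicate point: $X_0$ and $X_i$ lie on the same side of the plane through $X_{i+1},X_{i+2},X_{i+3}$. The paper asserts this without justification; your reason --- $\gamma$ meets that plane exactly once, so its initial segment together with the endpoint $X_0$ stays in the half-space containing the interior of $T_i$, whose ideal boundary is the component of $\CC\setminus C_i$ containing the free apex $X_i$ --- is sound and, if anything, more explicit than the original. (Both proofs also rely on the paper's implicit indexing convention that consecutive tetrahedra along $\gamma$ are $X_iX_{i+1}X_{i+2}X_{i+3}$ and $X_{i+1}X_{i+2}X_{i+3}X_{i+4}$; you are no worse off than the paper on that point.)
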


\begin{proof}
  In view of Theorem~\ref{1}, the values $b_i$ and $b_{i+4}$ are the two roots of the following quadratic equation
  $$ x^2-x(b_{i+1}+b_{i+2} +b_{i+3}) + b_{i+1}^2+b_{i+2} ^2+b_{i+3}^2+b_{i+2}b_{i+3}+b_{i+1}b_{i+3} +b_{i+1}b_{i+2}=0,
  $$
  which implies  $b_i+b_{i+4}=b_{i+1}+b_{i+2}+b_{i+3}$, which settles part (a).

  Applying equation  (a) iteratively we see that $b_i\in \Z$ for all $i$, as $b_1=b_2=b_3=1$ and $b_4=3$ (where the latter follows from Theorem~\ref{1}), which proves (b).

  To prove (c), notice that $X_0$ is separated from  $X_{i+4}$ by the plane $X_{i+1}X_{i+2}X_{i+3}$, and $X_i$ is the reflection image of   $X_{i+4}$ with respect to $X_{i+1}X_{i+2}X_{i+3}$. To compare $\lambda_{X_0X_{i+4}}$ with  $\lambda_{X_0X_{i}}$ notice, that the distance from the horosphere at $X_0$ to the horosphere at $X_{i+4}$ is equal to the sum of the distance from the horosphere at $X_0$ to the point $P= X_{i+1}X_{i+2}X_{i+3}\cap X_0X_{i+4}$   and the distance from $P$  to the horosphere at $X_i$, see Figure~\ref{ind}, right. The latter sum is larger than the distance from horosphere at $X_0$ to the horosphere at $X_i$, which implies   $b_{i+4}>b_i$.

\end{proof}

\begin{remark}
  Equation (a)  of Theorem~\ref{lin} is a counterpart of the following relation in the classical Farey graph:
  $$b_i+b_{i+3}=2(b_{i+1}+b_{i+2}), $$
where $b_i=\lambda^2_{X_0X_i}$ and triangles $X_iX_{i+1}X_{i+2}$ and  $X_{i+1}X_{i+2}X_{i+3}$ are fundamental ones.
\end{remark}

\subsection{3D Ptolemy relation: general formula } In this section we generalise relation (\ref{lambda4})  to any
pair of adjacent ideal tetrahedra in $\H^3$.

\begin{theorem}
\label{10}  
Let $X_1,\dots,X_5\in \CC=\partial \H^3$ be $5$ distinct points. Suppose that there are horospheres chosen in these points. Let $\lambda_{ij}=\lambda_{X_iX_j}$.  Then
$$ \sum\limits_{i<j} \lambda_{ij}^4 \lambda_{kl}^2\lambda_{lm}^2\lambda_{mk}^2= \sum\limits_{\text{{\rm cycles} $(ijklm)$}} \lambda_{ij}^2\lambda_{jk}^2\lambda_{kl}^2\lambda_{lm}^2\lambda_{mi}^2,$$
where all indices $i,j,k,l,m$ are distinct.

\end{theorem}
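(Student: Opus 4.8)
The plan is to reduce the statement to a purely Euclidean computation by normalising the configuration with an isometry and then invoking the relations already established for a single pair of adjacent tetrahedra. First I would use the isometry group of $\H^3$ to send one of the five points, say $X_5$, to $\infty$ in the upper half-space model, and choose its horosphere to be the horizontal plane $t=1$. By the homogeneity remark (cf. step (b) of Example~\ref{ex_Pt}), the relation is invariant under changing horospheres, since every $\lambda$-length incident to a vertex scales uniformly and one checks that both sides of the identity are homogeneous of the same degree in each vertex's horosphere parameter. Hence I may choose each horosphere freely. With $X_5=\infty$, Example~\ref{z^2/2h}(a)--(b) turns every $\lambda_{i5}$ into $\lambda_{i5}=1/\sqrt{2r_i}$ and every $\lambda_{ij}$ (for $i,j\le 4$) into $|z_i-z_j|/\sqrt{2r_ir_j}$, where $z_i\in\C$ is the Euclidean coordinate of $X_i$ and $r_i$ the radius of its horosphere. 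This converts the entire identity into a polynomial identity in the six Euclidean distances $|z_i-z_j|$ among four points $z_1,z_2,z_3,z_4$ in the plane $\C=\R^2$.

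The key observation is that four points in $\R^2$ are never in ``general position'' in $\R^3$: their mutual squared distances satisfy a single algebraic relation, namely the vanishing of the Cayley--Menger determinant
$$
\det\begin{pmatrix}
0 & 1 & 1 & 1 & 1\\
1 & 0 & d_{12} & d_{13} & d_{14}\\
1 & d_{12} & 0 & d_{23} & d_{24}\\
1 & d_{13} & d_{23} & 0 & d_{34}\\
1 & d_{14} & d_{24} & d_{34} & 0
\end{pmatrix}=0,
\qquad d_{ij}=|z_i-z_j|^2.
$$
Expanding this $5\times 5$ determinant produces exactly a relation among the quantities $d_{ij}$, and after substituting $d_{ij}=2r_ir_j\,\lambda_{ij}^2$ and $\lambda_{i5}^2=1/(2r_i)$ I expect the expansion to match the claimed identity term by term. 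The strategy is therefore: expand the Cayley--Menger determinant, clear denominators by multiplying through by an appropriate product of the $r_i$, and identify the monomials $\lambda_{ij}^4\lambda_{kl}^2\lambda_{lm}^2\lambda_{mk}^2$ on the left with the diagonal contributions $d_{ij}^2$ and the cyclic monomials $\lambda_{ij}^2\lambda_{jk}^2\lambda_{kl}^2\lambda_{lm}^2\lambda_{mi}^2$ on the right with the off-diagonal products.

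Alternatively, and perhaps more cleanly, I would derive the identity directly from Theorem~\ref{1} without appeal to tetrahedra being fundamental: the computation in the proof of Theorem~\ref{1} only used that $A_1A_2A_3A_4$ is a regular ideal tetrahedron, but the underlying algebraic fact $\sum a_i^2=\sum_{i<j}a_ia_j$ with $a_i=|z-A_i|^2$ holds for \emph{any} four points $A_i$ lying on a circle (equivalently, any four concyclic points in $\C$), since four concyclic points impose one relation. The honest general statement for five arbitrary boundary points is the full Cayley--Menger relation, which specialises to Theorem~\ref{1} exactly when the four finite points are concyclic. I expect the main obstacle to be bookkeeping: verifying that the signs and multiplicities produced by the determinant expansion agree precisely with the symmetric sum ``over cycles $(ijklm)$'' as written, and confirming that the factor $\lambda_{kl}^2\lambda_{lm}^2\lambda_{mk}^2$ on the left is produced with the correct coefficient after the substitution clears all the $r_i$-denominators uniformly. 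Once the degrees in each $r_i$ are checked to balance (which they must, by the homogeneity already noted), the identity becomes a finite verification that I would carry out by direct expansion.
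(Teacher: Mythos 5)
Your main route is sound but genuinely different from the paper's. The paper proves the statement via Lemma~\ref{formula}: it sends \emph{three} of the points to $0,1,\infty$, uses homogeneity to choose horospheres so that five of the ten $\lambda$-lengths equal $1$, computes the remaining five from Example~\ref{z^2/2h}, and verifies the resulting polynomial identity directly. You instead send only $X_5$ to $\infty$ and reduce to the vanishing of the Cayley--Menger determinant of $z_1,\dots,z_4\in\C$. This works, and in fact is essentially the argument the paper only sketches in the remark following the theorem (attributed to Izmestiev), phrased there via the Gram matrix of isotropic vectors. The cleanest way to finish your computation is to note that after substituting $d_{ij}=c\,r_ir_j\lambda_{ij}^2$ and $1=c\,r_i\lambda_{i5}^2$ (the constant $c$ is irrelevant by homogeneity), the Cayley--Menger matrix becomes $D\widetilde MD$ with $D$ diagonal and $\widetilde M$ the symmetric matrix with zero diagonal and off-diagonal entries $\lambda_{ab}^2$, $a,b\in\{1,\dots,5\}$; so the relation is $\det\widetilde M=0$. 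Expanding over derangements of $\{1,\dots,5\}$, the $24$ five-cycles (sign $+1$) give twice the right-hand sum and the $20$ permutations of type $(2,3)$ (sign $-1$) give twice the left-hand sum, which is exactly the claimed identity. Your approach buys symmetry in all five points and an immediate generalisation to $n+2$ points in $\partial\H^n$; the paper's proof is a shorter brute-force verification after a stronger normalisation.

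One concrete error in your aside: the claim that $\sum a_i^2=\sum_{i<j}a_ia_j$ with $a_i=|z-A_i|^2$, $a_4=1$, holds for \emph{any} four concyclic points is false. Take $A_1=0$, $A_2=1$, $A_3=2$, $A_4=\infty$ and $z=i$: then $(a_1,a_2,a_3,a_4)=(1,2,5,1)$, giving $\sum a_i^2=31$ but $\sum_{i<j}a_ia_j=25$. Concyclicity alone does not suffice; Theorem~\ref{1} uses that $A_1A_2A_3A_4$ is a \emph{regular} ideal tetrahedron, which makes all six $\lambda_{A_iA_j}$ equal to $1$, and only then does the general five-point relation collapse to that form. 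This does not affect your primary argument, which does not rely on the aside.
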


We use notation as in Figure~\ref{notation}. The theorem is an immediate corollary of the following lemma.

\begin{lemma}
\label{formula}
  Given two adjacent ideal tetrahedra as in Figure~\ref{notation}, one has
  \begin{multline*}\lambda^4\prod\limits_{i}t_i^2+  \sum\limits_{i} d_i^4t_i^2s_j^2s_k^2 + \sum\limits_{k} d_i^2d_j^2t_k^2s_k^4 + \lambda^2\sum\limits_i d_i^2t_i^4s_i^2\\= \sum\limits_k d_i^2d_j^2s_k^2(t_i^2s_i^2+t_j^2s_j^2) + \lambda^2\sum\limits_i d_i^2t_i^2(t_j^2s_j^2+t_k^2s_k^2).
    \end{multline*}
\end{lemma}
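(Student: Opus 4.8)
The plan is to prove Lemma~\ref{formula} and then obtain Theorem~\ref{10} as its symmetric closure. The geometric setup fixes two adjacent ideal tetrahedra sharing a common face, so there are exactly five points on $\partial\H^3$; the notation from Figure~\ref{notation} evidently assigns $\lambda$ to the $\lambda$-length of the shared diagonal (the edge joining the two apexes), $d_i$ to the three $\lambda$-lengths from one apex to the shared face, $t_i$ to the three $\lambda$-lengths from the other apex to the shared face, and $s_i$ to the three $\lambda$-lengths along the shared triangular face. First I would normalise by an isometry of $\H^3$, sending one apex to $\infty$ so that its horosphere becomes a horizontal plane $t=1$; by the homogeneity argument of step (b) in Example~\ref{ex_Pt} (changing a horosphere scales all incident $\lambda$-lengths by a common factor and every term in the claimed identity is homogeneous of the same degree in the horospheres at each point) I may also rescale the remaining four horospheres freely. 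The key computational tool is Example~\ref{z^2/2h}(b): with the apex at $\infty$ the four finite points become $z_1,\dots,z_4\in\C$ and, choosing the horosphere at $z_a$ of Euclidean radius $\rho_a$, every $\lambda$-length becomes $\lambda_{z_az_b}=|z_a-z_b|/\sqrt{2\rho_a}\sqrt{2\rho_b}$, reducing all the $\lambda$'s to Euclidean distances and radii.

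The heart of the argument is that the identity of Lemma~\ref{formula} is precisely the Soddy--Gosset / Descartes relation in disguise, reorganised to encode two adjacent tetrahedra rather than one. Concretely, the two quadruples $\{X_1,X_2,X_3,X_4\}$ and $\{X_1,X_2,X_3,X_5\}$ (sharing the face $X_1X_2X_3$) each give rise, via the correspondence $\lambda_i=1/\sqrt{2r_i}$ from Section~4.3, to a configuration of four mutually tangent horospheres together with the horosphere at the apex. The plan is to write down the Soddy--Gosset relation (\ref{k}) for each of the two tetrahedra, using Theorem~\ref{1} in the normalised form $\sum_i\lambda_i^4=\sum_{i<j}\lambda_i^2\lambda_j^2$ for the quadruple with apex $\infty$, and then clear the common-face data so that both relations are expressed in the invariant variables $\lambda, d_i, t_i, s_i$ rather than in terms of a chosen apex. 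Because det-lengths obey exactly the same relation (Theorem~\ref{2}), and by Theorem~\ref{lambda=l} the two notions agree on $\QQ(\sigma)$, it is enough to verify the polynomial identity once; I would do this by the same substitution trick as in the proof of Theorem~\ref{1}, putting the apex at $\infty$, placing the three shared vertices at $0,1,\sigma$ (or at general complex coordinates), and expanding both sides as polynomials in $z\bar z$, $(z-1)(\bar z-1)$, $(z-\sigma)(\bar z-\bar\sigma)$ and their cross terms.

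For Theorem~\ref{10} itself, once Lemma~\ref{formula} is established the deduction is formal: the lemma expresses the vanishing of a single alternating combination attached to the distinguished pair of adjacent tetrahedra, and summing (or recognising the symmetric orbit) over the choices of which diagonal plays the role of $\lambda$ produces the fully symmetric expression $\sum_{i<j}\lambda_{ij}^4\lambda_{kl}^2\lambda_{lm}^2\lambda_{mk}^2=\sum_{\text{cycles}}\lambda_{ij}^2\lambda_{jk}^2\lambda_{kl}^2\lambda_{lm}^2\lambda_{mi}^2$. The main obstacle I anticipate is purely bookkeeping rather than conceptual: correctly matching the index conventions of Figure~\ref{notation} (which $i,j,k$ pairing goes with which of $d,t,s$, and the cyclic role of the apex indices) so that the normalised Soddy--Gosset relations assemble into exactly the displayed polynomial, and then confirming that the symmetrisation collects each monomial with the right multiplicity. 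The underlying geometry — two regular ideal tetrahedra glued along a face, five mutually relevant horospheres, and the Descartes circle relation — does all the real work; the effort is in verifying that the homogeneous polynomial identity balances term by term, which I would carry out by a direct expansion after the $\infty$-normalisation, exactly as in Theorems~\ref{1} and~\ref{2}.
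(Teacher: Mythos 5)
Your overall skeleton (apply a M\"obius transformation, exploit multi-homogeneity to choose convenient horospheres, reduce everything to Example~\ref{z^2/2h}(b), verify a polynomial identity) is the same as the paper's, and your homogeneity observation is correct. But the step you call the ``heart of the argument'' does not work. The Soddy--Gosset relation~(\ref{k}) and Theorem~\ref{1} apply only to \emph{regular} ideal tetrahedra: four mutually tangent horospheres at the vertices of an arbitrary ideal tetrahedron do not exist in general (tangency of all pairs imposes six conditions on four radii), and the relation $\sum\lambda_i^4=\sum\lambda_i^2\lambda_j^2$ simply fails for a non-regular tetrahedron --- indeed Lemma~\ref{formula} is precisely the correction of that relation by the edge lengths of the tetrahedron, so deriving it from two instances of Theorem~\ref{1} is circular. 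Likewise Theorem~\ref{2} and Theorem~\ref{lambda=l} concern det-lengths of points of $\QQ(\sigma)$ with standard horospheres and give you nothing for five arbitrary points of $\CC$ with arbitrary horospheres. Your proposed normalisation is also over-determined: you cannot send the apex to $\infty$ \emph{and} the three shared vertices to $0,1,\sigma$, since $PSL_2(\C)$ is only triply transitive; consequently the verification cannot be a polynomial identity in a single variable $z$. The paper instead sends $X_1,X_2,X_3$ to $0,1,\infty$, leaves $X_4,X_5$ at two free parameters $z,w\in\C$, takes the three horospheres at $0,1,\infty$ mutually tangent and the horospheres at $z,w$ tangent to the one at $\infty$; this makes five of the ten $\lambda$-lengths equal to $1$ and the other five equal to $|z|,|z-1|,|w|,|w-1|,|z-w|$, after which the identity is checked directly in $z,\bar z,w,\bar w$.

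A smaller point: no symmetrisation ``over the choices of which edge plays the role of $\lambda$'' is needed to pass to Theorem~\ref{10}. The ten left-hand terms and twelve right-hand terms of Lemma~\ref{formula} already constitute the full symmetric identity of Theorem~\ref{10}, merely rewritten in the bipyramid labelling of Figure~\ref{notation} ($\lambda$ the apex--apex chord, $t_i$ the equatorial triangle, $s_i$ and $d_i$ the two cones); the corollary is a relabelling, not an orbit sum.
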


\begin{proof}
 Let $X_1,X_2,\dots,X_5$ be vertices of the polyhedron in   Figure~\ref{notation} (it is not important for the proof how they are matched to vertices).
  First, we apply the action of $PSL_2(\C)$ on $\H^3$ to map $X_1,X_2,X_3$ to $0,1,\infty$ (this does not change any of $\lambda_{ij}$). Suppose that $X_4$ and $X_5$ are mapped to points $z,w\in \C$. Notice that the choice of horospheres does not affect whether the equation is true or not (since the equation is homogeneous). So, we can take three tangent horospheres at $0,1,\infty$ and the two other horospheres tangent to the horosphere at $\infty$. This makes five of the $\lambda$-lengths equal to 1. The other five we compute
 using the formula from Example~\ref{z^2/2h}, and check that the obtained expressions satisfy the equation.

\end{proof}

\begin{figure}[!h]
\begin{center}
  \epsfig{file=./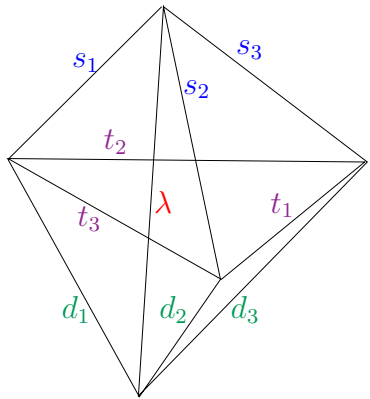,width=0.3\linewidth}
\put(-37,70){\color{Plum}$t_1$}
\put(-100,94){\color{Plum}$t_2$}
\put(-110,65){\color{Plum}$t_3$}
\put(-50,130){\color{blue}$s_3$}
\put(-70,115){\color{blue}$s_2$}
\put(-112,125){\color{blue} $s_1$}
\put(-52,30){\color{ForestGreen}$d_3$}
\put(-79,30){\color{ForestGreen}$d_2$}
\put(-116,30){\color{ForestGreen}$d_1$}
\put(-81,70){\color{red}$\lambda$}
\caption{Notation for Lemma~\ref{formula}: $\lambda$-lengths of edges}
\label{notation}
\end{center}
\end{figure}

\begin{remark}
It was noted to the authors by Ivan Izmestiev that   Theorem~\ref{10}, as well as
   similar formulae relating $\lambda$-lengths between $n+2$ points at the boundary of $n$-dimensional hyperbolic space for any $n\ge 4$, can be obtained as follows. Given an isotropic vector $u$ in the hyperboloid model of $\H^n$, every horosphere centred at $u$ can be written as $h_{u,c}=\{ x\in \H^n \mid \langle x,c u\rangle =-1/\sqrt 2\}$ for some $c\in \R_+$. Then for two points $u,v\in \partial H^n$ the $\lambda$-length  $\lambda_{u,v}$ with respect to the horospheres $h_{u,a}$ and $h_{v,b}$   
can be written as $\lambda_{u,v}=\sqrt{-\langle au,bv\rangle}$, see~\cite{P2}. Now, the formula for $n+2$ points $\{u_i\}$ can be obtained by expanding the determinant of the matrix $(\langle c_iu_i,c_ju_j\rangle)$, which vanishes since the vectors are linearly dependent. 

\end{remark}

\section{$\SL_2$-tilings over $\Z[\sigma]$}
\label{SL_2-tilings}

Taking their origin in Conway-Coxeter frieze patterns~\cite{C,CC}, $\SL_2$-tilings were introduced in~\cite{ARS} and became a topic of a rapidly growing field of studies connecting combinatorics, geometry, cluster algebras and many related domains, see the review by Morier-Genoud~\cite{MG} concerning the connections.
It was shown by Short~\cite{Sh} that all $SL_2$-tilings over $\Z$ can be classified in terms of  pairs of paths on the Farey graph $\mathcal F$.

In this section, we provide a classification of $SL_2$-tilings with entries in  $\Z[\sigma]$ in terms of pairs of paths in $\mathcal T$, generalising the results of~\cite{Sh}.

\subsection{Normalised  paths}

\begin{definition}
  
We say that $(v_i)_{i=k}^{n}$ for $k\in \{-\infty,0\}$ and $n\in \Z_+\cup \{+ \infty\}$  is a
{\it path} on $\T$ if $v_iv_{i+1}$ is an edge of $\mathcal T$ for every $i$ satisfying $k\le i <n$.

\end{definition}

We will abuse notation by  writing $(v_i)$ if the path is bi-infinite.

\begin{definition}
\label{irreducible_path}
A path $(v_i)_{i=k}^{n}$  represented by irreducible fractions $v_i=p_i/q_i$ for all $i$  is called  {\it normalised}
if 
$$
  \det
\begin{pmatrix} p_i& p_{i+1} \\ q_i&q_{i+1}
\end{pmatrix}
=1
$$
for all admissible $i$.

\end{definition}

\begin{remark}\label{18-similarity}
  Given an irreducible fraction $p_0/q_0$ representing $v_0\in \QQ(\sigma)$
  (so that we fix one of the six fractions of the form $\sigma^{k}p_0/\sigma^{k}q_0$),
  there is a unique fraction $p_1/q_1$ representing $v_1\in \QQ(\sigma)$ and satisfying  the condition for a normalised path given in Definition~\ref{irreducible_path}. Similarly  each $p_i/q_i$ in the normalised path  $(v_i)$ can be uniquely reconstructed from $p_0/q_0$. Therefore, for every nontrivial path  there are precisely six different normalisations
  (obtained from one of them by multiplying both $p_i$ and $q_i$ by $\sigma^{(-1)^i\cdot k}$).

\end{remark}

\subsection{ $\SL_2$-tilings}

\begin{definition}[$\SL_2$-tiling]
  
  Let $M=(m_{ij})$ be a bi-infinite matrix, $i,j\in \Z$, $m_{ij}\in R$, where $R$ is an  integral domain.
  $M$ is an {\it $\SL_2$-tiling} if
$$ \begin{pmatrix} m_{i,j} & m_{i,j+1} \\ m_{i+1,j} & m_{i+1,j+1}\end{pmatrix} \in \SL_2(R)$$
for any $i,j\in \Z$.

An $\SL_2$-tiling $M$ is {\it tame} if
$$\det \begin{pmatrix} m_{i,j} & m_{i,j+1}&  m_{i,j+2}\\ m_{i+1,j} & m_{i+1,j+1}& m_{i+1,j+2}\\m_{i+2,j} & m_{i+2,j+1}& m_{i+2,j+2}\end{pmatrix}=0$$
for any $i,j\in \Z$.

\end{definition}

Given two  bi-infinite normalised paths  $(u_i)$ and $(v_j)$, where $u_i=p_i/q_i$,  $v_j=r_j/s_j$, consider the numbers
\begin{equation}
\label{sl}
  m_{ij}= m(u_i,v_j)=\det \begin{pmatrix} p_i &r_j \\ q_i&s_j\end{pmatrix}.
\end{equation}

\begin{prop}[\cite{Sh}]
\label{sh} If $R=\Z$,
  the bi-infinite matrix $(m_{ij})$ is a tame $\SL_2$-tiling. Moreover,   $($\ref{sl}$)$ provides a bijection between
  tame $\SL_2$-tilings  $($modulo multiplication of all entries by $-1$$)$ and  pairs of bi-infinite normalised paths in the Farey graph $\mathcal F$ $($considered up to simultaneous action of $\SL_2(\Z)$$)$.

\end{prop}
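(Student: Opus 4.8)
The plan is to recast the whole correspondence in linear-algebra terms. Write the paths as column vectors $U_i=\binom{p_i}{q_i}$ and $V_j=\binom{r_j}{s_j}$ in $\Z^2$, so that $m_{ij}=\det(U_i\,|\,V_j)=U_i^\top J V_j$ with $J=\begin{pmatrix}0&1\\-1&0\end{pmatrix}$, and the normalisation condition becomes $\det(U_i\,|\,U_{i+1})=\det(V_j\,|\,V_{j+1})=1$. To show $(m_{ij})$ is an $\SL_2$-tiling I would apply the Grassmann--Plücker relation for planar vectors, $\det(a|b)\det(c|d)-\det(a|c)\det(b|d)+\det(a|d)\det(b|c)=0$, with $a=U_i,\,b=U_{i+1},\,c=V_j,\,d=V_{j+1}$; this gives $m_{ij}m_{i+1,j+1}-m_{i,j+1}m_{i+1,j}=\det(U_i|U_{i+1})\det(V_j|V_{j+1})=1$. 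Tameness is then immediate from the factorisation $m_{ij}=U_i^\top JV_j$: the $3\times3$ consecutive block equals $PQ$, where $P$ has rows $U_i^\top,U_{i+1}^\top,U_{i+2}^\top$ and $Q$ has columns $JV_j,JV_{j+1},JV_{j+2}$, a product of a $3\times2$ and a $2\times3$ matrix and hence of rank at most two, so its determinant vanishes.

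Next I would check that the map descends to the stated quotients. Every $g\in\SL_2(\Z)$ satisfies the symplectic identity $g^\top Jg=J$, so the simultaneous action $U_i\mapsto gU_i,\ V_j\mapsto gV_j$ leaves each $m_{ij}=U_i^\top JV_j$ unchanged. The only remaining freedom in normalising a single path is an overall sign, which multiplies the whole tiling by $-1$; this is exactly the equivalence modulo $\pm1$ imposed on tilings. Thus the map is well defined from pairs of normalised paths up to simultaneous $\SL_2(\Z)$ to tame $\SL_2$-tilings up to sign.

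The substance is bijectivity. For surjectivity I would start from a tame tiling $(m_{ij})$ and reconstruct the paths. Restricting two consecutive rows, the $\SL_2$-condition says the corresponding pair of column-restrictions is unimodular; combining this with the vanishing of consecutive $3\times3$ minors and Cramer's rule (the relevant $2\times2$ determinants being $\pm1$) yields a three-term recurrence $C_{j+2}=\gamma_jC_{j+1}+\delta_jC_j$ on the full columns, with integer coefficients independent of the row index. Propagating it in both directions writes every column as a $\Z$-combination of $C_0,C_1$, so $m_{ij}=a_jm_{i0}+b_jm_{i1}$ with $(a_j,b_j)\in\Z^2$; then $U_i=\binom{m_{i0}}{m_{i1}}$ and $V_j=\binom{-b_j}{a_j}$ satisfy $m_{ij}=\det(U_i|V_j)$, the $\SL_2$-condition forces $\det(U_i|U_{i+1})=1$ and, via Plücker again, $\det(V_j|V_{j+1})=1$, and primitivity of the $U_i$ and $V_j$ follows from these unit determinants. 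For injectivity I would show any factorisation equals this canonical one up to a single element of $\SL_2(\Z)$: writing $U_i^{\mathrm{can}}=\binom{\det(U_i|V_0)}{\det(U_i|V_1)}$ exhibits $U_i^{\mathrm{can}}=AU_i$ with $A\in\SL_2(\Z)$ the matrix whose rows are $(J^\top V_0)^\top$ and $(J^\top V_1)^\top$, and the symplectic identity $A^\top JA=J$ forces the very same $A$ to carry each $V_j$ to $V_j^{\mathrm{can}}$; hence two pairs producing the same tiling differ by a simultaneous $\SL_2(\Z)$-action.

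The main obstacle I anticipate is the surjectivity step, namely upgrading the hypothesis that \emph{consecutive} $3\times3$ minors vanish into a genuine global rank-two factorisation over $\Z$: one must verify that the recurrence coefficients are both row-independent and integral, that the propagation actually reaches every column (in both directions), and that the reconstructed integer vectors are primitive and consistently oriented, so that they assemble into normalised paths rather than arbitrary integer sequences. The remaining delicacy is the sign bookkeeping, ensuring that the overall-sign ambiguity in normalising each of the two paths matches precisely the $\pm1$ equivalence on tilings and introduces no further identifications.
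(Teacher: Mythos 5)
Your argument is correct and is essentially the approach taken in the paper: the proposition itself is quoted from~\cite{Sh} without proof, but the paper's proof of its Eisenstein-integer generalisation (Propositions~\ref{paths->tiling} and~\ref{sl-scalar}, Lemmas~\ref{isom->equiv} and~\ref{inj}, Theorems~\ref{sigma-sh} and~\ref{sh-det}) follows exactly your plan --- multiplicativity of the determinant for the $\SL_2$ condition, the rank-two factorisation for tameness, reconstruction of the paths from two consecutive rows for surjectivity, and reduction to a canonical factorisation for injectivity --- merely phrased via the scalar product $p_ir_j+q_is_j$ instead of the determinant, the two being interchanged by replacing one path $(r_j/s_j)$ with $(-s_j/r_j)$ (Remark~\ref{det-scal}). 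On the one genuinely delicate point, which you correctly flag, you are in fact more explicit than the paper: the passage from vanishing of consecutive $3\times3$ minors to ``every row is an integral combination of two fixed adjacent rows'' is asserted there in an unlabelled remark after Proposition~\ref{paths->tiling}, whereas your Cramer-plus-uniqueness propagation supplies the missing argument (note that the coefficient of $C_j$ in your recurrence is forced to be $-1$ by the unit $2\times2$ determinants, so the recurrence indeed propagates over $\Z$ in both directions).
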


\subsection{$\SL_2$-tilings over $\Z[\sigma]$  and pairs of paths in $\T$}
The goal of this section is to prove a counterpart of Proposition~\ref{sh} based on the paths in $\T$ (see Theorems~\ref{sigma-sh} and~\ref{sh-det}).

\begin{definition}
Consider two bi-infinite normalised paths $(p_i/q_i)$
and $(r_j/s_j)$ in $\T$.
The
{\it scalar product of two paths}
is a matrix $(m_{i,j})$ where
$$
m_{i,j}=p_ir_j+q_is_j
$$
for all  $i$ and $j$. In this case we  write  $(m_{i,j})= (p_i/q_i)\cdot(r_j/s_j).$
\end{definition}

\begin{remark}
  In terms of matrices,  the scalar product of two paths can be understood as  a matrix multiplication:
  $$
  (m_{i,j})=\begin{pmatrix} ...&...\\  p_{i-1} &q_{i-1} \\ p_i & q_i \\ p_{i+1}& q_{i+1}\\ ...&....\end{pmatrix}
  \begin{pmatrix} ...& r_{j-1}&r_j&r_{j+1}&... \\...& s_{j-1}&s_j&s_{j+1}&... \\ \end{pmatrix}.
  $$

\end{remark}

\begin{prop}
\label{paths->tiling}
The scalar product $(m_{i,j})$ of two  bi-infinite normalised paths $(p_i/q_i)$
and $(r_j/s_j)$ in $\T$ forms a tame $\SL_2(\Z[\sigma])$-tiling.

\end{prop}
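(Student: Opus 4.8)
The plan is to verify the two defining conditions of a tame $\SL_2(\Z[\sigma])$-tiling directly from the matrix factorisation
$$(m_{i,j})=\begin{pmatrix} \vdots & \vdots \\ p_i & q_i \\ p_{i+1} & q_{i+1} \\ \vdots & \vdots \end{pmatrix}\begin{pmatrix} \cdots & r_j & r_{j+1} & \cdots \\ \cdots & s_j & s_{j+1} & \cdots \end{pmatrix}.$$
This is the three-dimensional analogue of the argument behind Proposition~\ref{sh}, and the normalisation condition of Definition~\ref{irreducible_path} is exactly what makes it work.

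First I would check the $\SL_2$-condition. For any $i,j$ the $2\times 2$ block is
$$\begin{pmatrix} m_{i,j} & m_{i,j+1} \\ m_{i+1,j} & m_{i+1,j+1}\end{pmatrix}=\begin{pmatrix} p_i & q_i \\ p_{i+1} & q_{i+1}\end{pmatrix}\begin{pmatrix} r_j & r_{j+1} \\ s_j & s_{j+1}\end{pmatrix}.$$
By the Cauchy--Binet formula (or simply multiplicativity of the determinant), its determinant is the product of the determinants of the two factors. Since both paths are normalised, each factor has determinant $1$, so the block lies in $\SL_2(\Z[\sigma])$. This uses nothing beyond Definition~\ref{irreducible_path} and the fact that $p_i,q_i,r_j,s_j\in\Z[\sigma]$.

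Next I would check tameness. The key structural observation is that $(m_{i,j})$ is, by construction, a product of an $\infty\times 2$ matrix and a $2\times\infty$ matrix, hence has rank at most $2$. Therefore every $3\times 3$ minor vanishes: the relevant $3\times 3$ submatrix factors as a product of a $3\times 2$ and a $2\times 3$ matrix, and by Cauchy--Binet its determinant is a sum of products of $2\times 2$ minors of the factors, but each summand contains a $3\times 3$ determinant from one of the rank-$2$ factors — more cleanly, the $3\times 3$ minor is a sum over pairs of columns of $2\times 2$ minors, and a direct application of Cauchy--Binet to a product through a rank-$2$ intermediate space forces it to be zero. I would phrase this simply: the three rows of any such $3\times 3$ block all lie in the two-dimensional row space spanned by the columns of the right factor, so they are linearly dependent and the determinant is $0$.

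The main (and essentially only) obstacle is bookkeeping rather than genuine difficulty: one must be careful that the entries are taken from a \emph{fixed} normalisation, so that the determinant-$1$ conditions hold with the correct sign conventions, and that the Cauchy--Binet step is legitimate over the integral domain $\Z[\sigma]$ (it is, since it is a polynomial identity in the matrix entries). Everything else is a formal consequence of expressing $(m_{i,j})$ as a rank-$\le 2$ product, so no serious computation is required.
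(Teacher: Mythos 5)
Your proposal is correct and follows essentially the same route as the paper: the $\SL_2$-condition via multiplicativity of the determinant applied to the $2\times 2$ block factorisation (the paper points to the computation in Proposition~\ref{det-l}), and tameness from the observation that $(m_{i,j})$ is a product of two rank-two matrices, so every $3\times 3$ minor vanishes. The Cauchy--Binet digression in your tameness paragraph is unnecessary — your final linear-dependence sentence is the whole argument — but nothing is missing.
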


\begin{proof}
 A direct calculation shows that the scalar product forms an  $\SL_2(\Z[\sigma])$-tiling (cf. the proof of Proposition~\ref{det-l}).
  The tameness follows from the fact that $(m_{i,j})$ is a product of two (infinite) matrices of rank two.

\end{proof}

\begin{remark}
Note that tameness is equivalent to the requirement that every row of matrix $(m_{i,j})$ is a linear combination of two adjacent rows, say $(m_{0,j})$ and $(m_{1,j})$.

\end{remark}

\begin{example}
  \label{ex}
  Consider two periodic paths in ${\mathcal T}$: $(u_i)=(\dots, \infty,0,1,\sigma,\infty,\dots)$, and $(v_j)=(\dots,0,\sigma^2,-1,\infty, 0,\dots)$. Normalise the paths as follows:
  $$u_0=\frac10,\ u_1=\frac01,\ u_2=\frac{-1}{-1},\ u_3=\frac{\bar \sigma}{-\sigma},\ \text{and}\quad u_{i+4}=\frac{\sigma^{{(-1)}^{i+1}}}{\sigma^{{(-1)}^{i+1}}}u_i;$$
  $$v_0=\frac0{-\bar\sigma},\ v_1=\frac{\sigma}{\bar\sigma},\ v_2=\frac{-1}{1},\ v_3=\frac{-1}{0},\ \text{and}\quad v_{i+4}=\frac{\sigma^{{(-1)}^{i}}}{\sigma^{{(-1)}^{i}}}v_i.$$
  The initial four values and the recursive formulae define the (bi-infinite) normalised paths uniquely. By taking the scalar product, we obtain a tiling shown in Fig.~\ref{tiling-ex}. The tiling has a block structure with the initial block of size $4\times 4$ shown in Fig.~\ref{tiling-ex}, with
  $$m_{i+4k,j+4l}=\sigma^{l-k}m_{i,j}.$$
  Note that we get a counterpart of an antiperiodic $SL_2$-tiling considered in~\cite[Section 3.3]{MGOT}. The reason for this is the periodicity of the paths $(u_i)$ and $(v_j)$.

  \end{example}

  \begin{figure}[!h]
    \begin{center}
      $$      
      \begin{array}{c|ccccccc}
    & \ldots&\frac{0}{-\bar\sigma}& \frac{\sigma}{\bar\sigma} &\frac{-1}{1} & \frac{-1}{0} & \frac{0}{-1}& \ldots \phantom{\int\limits_{A_A}^{A}}\\
       \hline
    \vdots & \ddots&&& \vdots&&& \iddots\\    
\frac{1}{0}     &&\cellcolor{blue!7} 0&\cellcolor{blue!7} \sigma &\cellcolor{blue!7} -1 &\cellcolor{blue!7} -1 & 0&    \phantom{\int\limits^{-}}\\
 \frac{0}{1}      & &\cellcolor{blue!7}-\bar\sigma &\cellcolor{blue!7} \bar \sigma&\cellcolor{blue!7} 1&\cellcolor{blue!7} 0& -1&                \phantom{\int\limits^{A}}\\
 \frac{-1}{-1}    & \ldots &\cellcolor{blue!7}\bar\sigma &\cellcolor{blue!7}-1 &\cellcolor{blue!7} 0 &\cellcolor{blue!7} 1& 1& \ldots            \phantom{\int\limits^{A}}\\
 \frac{\bar\sigma}{-\sigma}     & &\cellcolor{blue!7}1 &\cellcolor{blue!7} 0 &\cellcolor{blue!7} -1  &\cellcolor{blue!7} -\bar\sigma & \sigma&    \phantom{\int\limits^{A}}\\
\frac{\bar\sigma}{0}     & &0& 1& -\bar\sigma & -\bar\sigma & 0&          \phantom{\int\limits^{A}}\\
   \vdots  &  \iddots& && \vdots&&&\ddots\\    
    \end{array}
    $$   
\caption{$SL_2(\Z[\sigma]$)-tiling  as a scalar product of two normalised paths in $\mathcal T$. The shaded region is the initial block, all shifted blocks are obtained from that one by multiplication by powers of $\sigma$ (see Example~\ref{ex})}
\label{tiling-ex}
\end{center}
\end{figure}

\begin{prop}\label{sl-scalar}
Any tame $\SL_2(\Z[\sigma])$-tiling is a scalar product of two normalised paths.
\end{prop}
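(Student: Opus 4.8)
The plan is to invert the scalar product construction of Proposition~\ref{paths->tiling}: starting from a tame $\SL_2(\Z[\sigma])$-tiling $(m_{i,j})$, I would reconstruct two normalised paths $(p_i/q_i)$ and $(r_j/s_j)$ whose scalar product is the given tiling. The key structural input is the remark following Proposition~\ref{paths->tiling}: tameness means every row of $(m_{i,j})$ is a linear combination of any two adjacent rows. Equivalently, since the $2\times 2$ minors of consecutive rows and columns are units (all equal to $\pm 1$, in fact powers of $\sigma$ after a sign/unit normalisation), the infinite matrix has rank two in a strong sense, and one expects to factor it as a product of a ``tall'' rank-two matrix (the row vectors $(p_i,q_i)$) and a ``wide'' rank-two matrix (the column vectors $(r_j,s_j)$).

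Concretely, I would first fix two adjacent columns, say columns $0$ and $1$, and use the $\SL_2$ condition $m_{i,0}m_{i+1,1}-m_{i,1}m_{i+1,0}=1$ together with tameness to define the left factor: set $(p_i,q_i)$ to be a vector built from the entries $m_{i,0}$ and $m_{i,1}$. The first step is to show that the resulting sequence $(p_i/q_i)$, read as fractions in $\QQ(\sigma)$, is a normalised path in $\mathcal T$, i.e. that consecutive vectors $(p_i,q_i)$ and $(p_{i+1},q_{i+1})$ have determinant $1$; this follows directly from the $\SL_2$-tiling condition on the relevant $2\times 2$ block and from Proposition~\ref{edges}, which guarantees that determinant $1$ is exactly the condition for an edge of $\mathcal T$. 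I would similarly extract the right factor $(r_j,s_j)$ from two adjacent rows.

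The heart of the argument, and the step I expect to be the main obstacle, is verifying that the two extracted paths actually reproduce \emph{all} entries of the tiling via $m_{i,j}=p_ir_j+q_is_j$, not merely the two reference rows and columns used to define them. Here tameness does the real work: because every row lies in the two-dimensional span of the two reference rows, and the coefficients of that linear combination are precisely recorded by the left factor, the full matrix factors as claimed. I would make this precise by writing the tameness-induced $3\times 3$ vanishing determinants as the assertion that the map $j\mapsto (m_{i,j})_i$ lands in a rank-two space, choose a basis adapted to the reference rows/columns, and check that the change-of-basis coefficients match $(p_i,q_i)$ and $(r_j,s_j)$ on the nose. One subtlety to handle carefully is the choice of units (powers of $\sigma$) and signs: the reconstruction is only canonical up to the six-fold normalisation ambiguity described in Remark~\ref{18-similarity}, so I would fix a normalisation of the reference row and column at the outset and track how the unit scaling propagates, ensuring the final identity $m_{i,j}=p_ir_j+q_is_j$ holds with the correct normalisation throughout.

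Once the factorisation $m_{i,j}=p_ir_j+q_is_j$ is established with $(p_i/q_i)$ and $(r_j/s_j)$ both normalised paths in $\mathcal T$, the statement follows, since this is exactly the definition of the scalar product of the two paths. This mirrors the proof of Proposition~\ref{sh} over $\Z$ from~\cite{Sh}, with the extra bookkeeping coming only from the larger unit group of $\Z[\sigma]$.
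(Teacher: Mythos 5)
Your proposal is correct and follows essentially the same route as the paper: extract one normalised path from two adjacent rows (or columns), use tameness plus the unit determinant of a reference $2\times 2$ block to express every other row as a $\Z[\sigma]$-linear combination, and read the second normalised path off the coefficients. The paper streamlines the ``change of basis'' step you anticipate as the main obstacle by simply declaring $(p_0/q_0,p_1/q_1)=(1/0,0/1)$, so that $r_j=m_{0,j}$, $s_j=m_{1,j}$ and the identity $m_{i,j}=p_ir_j+q_is_j$ holds by construction, leaving only the normalisation of $(p_i/q_i)$ to verify; note also that the relevant $2\times 2$ determinants are exactly $1$ by the $\SL_2$ condition, so the unit/sign bookkeeping you worry about does not actually arise here.
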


\begin{proof}
  Let $(m_{ij})$ be a  tame $\SL_2(\Z[\sigma])$-tiling.
  First, we construct the path $(r_j/s_j)$.
  Set $(p_0/q_0,p_1/q_1)=(1/0,0/1)$. Then we can set $r_j=m_{0,j}, s_j=m_{1,j}$. This would imply that condition
  $m_{i,j}=p_ir_j+q_is_j$ is satisfied for $i=0$ and all $j$.  Notice that  since
  $$\det \begin{pmatrix}m_{0,j}&m_{0,j+1}\\m_{1,j}&m_{1,j+1}\\ \end{pmatrix}=1,$$
 each fraction $r_j/s_j=m_{0,j}/m_{1,j}$ is irreducible, and by the same reason the path  $(r_j/s_j)$ is normalised.

 Now, we use the path  $(r_j/s_j)$ and the values  $(p_0/q_0,p_1/q_1)=(1/0,0/1)$ to construct $(p_i/q_i)$ for all $i$.
 We know from tameness of  $(m_{i,j})$ that every row of $(m_{i,j})$ is a linear combination of the rows $(m_{0,j})$ and $(m_{1,j})$. Denote by $\alpha_i$ and $\beta_i$ the coefficients of this linear combination for the $i$-th row, i.e.
 $$m_{i,j}=\alpha_i m_{0,j}+\beta_i m_{1,j}.$$
 The values $\alpha_i$ and $\beta_i$ are uniquely defined from  
 \begin{equation}\label{alpha-beta}
 \begin{pmatrix} \alpha_i & \beta_i\\\end{pmatrix}
 \begin{pmatrix} m_{0,0}&m_{0,1}\\ m_{1,0}&m_{1,1}\end{pmatrix}=
 \begin{pmatrix} m_{i,0}&m_{i,1}\end{pmatrix},
 \end{equation}
 moreover,  $\alpha_i,\beta_i\in \Z[\sigma]$ as $\det \begin{pmatrix} m_{0,0}&m_{0,1}\\ m_{1,0}&m_{1,1}\end{pmatrix}=1$.
We set $p_i=\alpha_i$ and $q_i=\beta_i$. As   $r_j=m_{0,j}$ and $s_j=m_{1,j}$, this implies that $m_{i,j}=p_im_{0,j}+q_im_{1,j}=p_ir_j+q_is_j$ for all $i,j$.

We are left to check that the constructed path $(p_i/q_i)$ is normalised.
Notice that (\ref{alpha-beta}) implies that $\alpha_i/\beta_i$ is an irreducible fraction.
Also, notice that
$$
 \begin{pmatrix} \alpha_i&\beta_i\\ \alpha_{i+1} & \beta_{i+1}\\\end{pmatrix}
 \begin{pmatrix} m_{0,0}&m_{0,1}\\ m_{1,0}&m_{1,1}\end{pmatrix}=
 \begin{pmatrix}  m_{i,0}&m_{i,1}\\m_{i+1,0}&m_{i+1,1}\end{pmatrix}
$$
implies that $\det \begin{pmatrix} \alpha_i&\beta_i\\ \alpha_{i+1} & \beta_{i+1}\end{pmatrix}=1$, and hence, the path is normalised.

\end{proof}

\begin{remark}
\label{unique for 10 01}
Notice that once the initial values  $(p_0/q_0,p_1/q_1)=(1/0,0/1)$ are fixed, the sequences $p_i/q_i$ and $s_j/r_j$ are uniquely determined by $(m_{i,j})$. 

\end{remark}

\begin{remark}\label{sl2-normalised}
An easy computation shows that the action of $\SL_2(\Z[\sigma])$ on paths  takes normalised paths to normalised ones (cf. the proof of Proposition~\ref{det-l}).

\end{remark}  

\begin{lemma}\label{isom->equiv}
  Let $p_i/q_i$ and $r_j/s_j$ be two normalised paths. Let $A\in \SL_2(\Z[\sigma])$  and consider
  $$ \begin{pmatrix} p_i'\\ q_i'\end{pmatrix} =A \begin{pmatrix} p_i\\ q_i\end{pmatrix},  \qquad
     \begin{pmatrix} r_j'\\ s_j'\end{pmatrix} =(A^T)^{-1} \begin{pmatrix} r_j\\ s_j\end{pmatrix}.
     $$
     Let $M$ and $M'$ be  $\SL_2(\Z[\sigma])$-tilings given by
     $$M= ((p_i/q_i)\cdot (r_j/s_j)) \quad \text{and} \quad  M'= ((p_i'/q_i')\cdot (r_j'/s_j')).$$
     Then  $M$ and $M'$ coincide.

\end{lemma}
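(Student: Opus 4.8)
The plan is to compute $M$ and $M'$ entrywise and show they agree, using the fact that the scalar product is literally a matrix product together with the transpose-inverse relation between the two transformations. First I would recall that the $(i,j)$-entry of $M$ is the scalar product $m_{ij}=p_ir_j+q_is_j = \begin{pmatrix} p_i & q_i\end{pmatrix}\begin{pmatrix} r_j\\ s_j\end{pmatrix}$. The point of choosing $A$ on the left paths and $(A^T)^{-1}$ on the right paths is precisely that this scalar product is invariant: the transformation on the row vector and the transformation on the column vector cancel.

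The key computation is then short. Writing $v_i=\begin{pmatrix} p_i\\ q_i\end{pmatrix}$ and $w_j=\begin{pmatrix} r_j\\ s_j\end{pmatrix}$, the entry of $M$ is $m_{ij}=v_i^T w_j$, while the entry of $M'$ is
$$
m'_{ij}=(v_i')^T w_j' = (A v_i)^T (A^T)^{-1} w_j = v_i^T A^T (A^T)^{-1} w_j = v_i^T w_j = m_{ij}.
$$
Since this holds for every pair $(i,j)$, the two tilings coincide entry by entry, hence $M=M'$. I would present this as the core of the argument.

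A couple of small points need checking to make the statement fully meaningful rather than obstacles in the genuine sense. One should note that the transformed sequences $(p_i'/q_i')$ and $(r_j'/s_j')$ are again normalised paths, so that $M'$ is a legitimate scalar product of normalised paths; this is exactly Remark~\ref{sl2-normalised} for the $A$-action, and the analogous statement for $(A^T)^{-1}$ follows from the same computation since $(A^T)^{-1}\in \SL_2(\Z[\sigma])$ (its determinant equals $(\det A)^{-1}=1$). I would also remark that $A$ acting on the left paths as column vectors and $(A^T)^{-1}$ acting on the right paths as column vectors is the natural way to realise the simultaneous $\SL_2(\Z[\sigma])$-action that appears in the classification theorems, which is the conceptual reason this invariance is the right statement to isolate.

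Honestly there is no real obstacle here: the whole content is the cancellation $A^T(A^T)^{-1}=\mathrm{Id}$, and the lemma is essentially a bookkeeping statement that the scalar-product construction is equivariant under this paired action. The only care required is to make sure the transpose-inverse is placed on the correct side so that the cancellation actually occurs, and to confirm that the resulting sequences are still normalised so that the phrase ``$\SL_2(\Z[\sigma])$-tilings given by'' is justified.
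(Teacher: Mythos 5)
Your proof is correct and is essentially identical to the paper's: the paper also writes the scalar product as a matrix product and cancels $A^T(A^T)^{-1}=\mathrm{Id}$, just displayed with the infinite matrices rather than entrywise. Your added check that the transformed paths remain normalised is a reasonable remark (the paper delegates it to Remark~\ref{sl2-normalised}) but does not change the argument.
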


\begin{proof} This is a direct computation:
  \begin{multline*}
(m_{ij}')= \begin{pmatrix} ...&...\\  p_{i-1}' &q_{i-1}' \\ p_i' & q_i' \\ p_{i+1}'& q_{i+1}'\\ ...&....\end{pmatrix}
  \begin{pmatrix} ...& r_{j-1}'&r_j'&r_{j+1}'&... \\...& s_{j-1}'&s_j'&s_{j+1}'&...  \end{pmatrix} = \\
   \begin{pmatrix} ...&...\\  p_{i-1} &q_{i-1} \\ p_i & q_i \\ p_{i+1}& q_{i+1}\\ ...&....\end{pmatrix} A^T (A^T)^{-1}
   \begin{pmatrix} ...& r_{j-1}&r_j&r_{j+1}&... \\...& s_{j-1}&s_j&s_{j+1}&...  \end{pmatrix}
      =(m_{ij}).
  \end{multline*}
\end{proof}

\begin{remark}\label{rem18}
  Notice that different normalisations of the paths may lead to different $\SL_2(\Z[\sigma])$-tilings. More precisely,
if $p_i$ and $q_i$ are multiplied by $\sigma^{(-1)^i k}$ (cf. Remark~\ref{18-similarity}), then $i$-th row of  $(m_{i,j})$ is multiplied by  $\sigma^{(-1)^i k}$.
Similarly, changing the normalisation of $(r_j/s_j)$ affects   columns of  $(m_{i,j})$.
This leads to $36$ tame $\SL_2(\Z[\sigma])$-tilings, generically $18$ of them are distinct tilings (as a simultaneous multiplication of  columns and rows by $-1$ preserves the tiling).
\end{remark}

\begin{definition}\label{def-equiv}
We call $\SL_2(\Z[\sigma])$-tilings obtained from different normalisations of the same pair of paths {\it equivalent}.

\end{definition}

\begin{lemma}\label{inj}
  Let  $((p_i/q_i),(r_j/s_j))$ and  $((p_i'/q_i'),(r_j'/s_j'))$ be two pairs of normalised paths and
  $M=(p_i/q_i)\cdot(r_j/s_j)$, $M'=(p_i'/q_i')\cdot(r_j'/s_j')$ be their scalar products.

  If $M$ is equivalent to $M'$ then there exists  a matrix  $A\in \SL_2(\Z[\sigma])$ and  normalisations   $ p_i''/ q_i''$  and  $\ r_j''/ s_j''$ of  $p_i'/q_i'$ and    $r_j'/s_j'$ respectively  such that
  $$ \begin{pmatrix}  p_i''\\  q_i''\end{pmatrix} =A \begin{pmatrix} p_i\\ q_i\end{pmatrix},  \qquad
     \begin{pmatrix}  r_j''\\  s_j''\end{pmatrix} =(A^T)^{-1} \begin{pmatrix} r_j\\ s_j\end{pmatrix}.
     $$

\end{lemma}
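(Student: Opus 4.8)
The plan is to reverse the construction of Proposition~\ref{sl-scalar}, which shows how to recover a canonical pair of paths from a tiling. The key observation is that the tiling $M$ (together with its equivalence class) determines the pair of paths essentially uniquely, up to the ambiguities coming from choosing a basis for the two-dimensional column space and from the $\sigma$-rescaling built into the equivalence relation. Since $M$ and $M'$ are equivalent, they are the same tiling after rescaling rows by $\sigma^{(-1)^i k}$ and columns by $\sigma^{(-1)^j l}$ (together with the global $\pm 1$); I would first absorb these rescalings by replacing $(p_i'/q_i')$ and $(r_j'/s_j')$ with the appropriately renormalised representatives $(p_i''/q_i'')$ and $(r_j''/s_j'')$, so that after this step $M = (p_i''/q_i'')\cdot(r_j''/s_j'')$ as honest matrices, not merely up to equivalence.

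Once the two scalar products agree as matrices, I would invoke the uniqueness already established in Remark~\ref{unique for 10 01}: if we fix the initial pair $(p_0/q_0,p_1/q_1)=(1/0,0/1)$, then the tiling determines both paths uniquely. The idea is to transport both of our pairs to this normal form. Concretely, let $B = \begin{pmatrix} p_0 & q_0 \\ p_1 & q_1 \end{pmatrix}$ and $B'' = \begin{pmatrix} p_0'' & q_0'' \\ p_1'' & q_1'' \end{pmatrix}$; both lie in $SL_2(\Z[\sigma])$ because the paths are normalised. Acting on the first path by $A = (B'')^{-1}B$ (and correspondingly by $(A^T)^{-1}$ on the second, as in Lemma~\ref{isom->equiv}) moves the $(p_i/q_i)$-path so that its initial data becomes $(1/0,0/1)$; by Lemma~\ref{isom->equiv} the scalar product is unchanged, so it still equals $M$. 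Doing the analogous transport to the primed pair lands both pairs of paths in the situation governed by Remark~\ref{unique for 10 01}, where the common tiling $M$ forces them to coincide row-by-row and column-by-column. Tracking the composite of these transport matrices back through $A$ and $(A^T)^{-1}$ then produces the single matrix $A\in SL_2(\Z[\sigma])$ relating the original $(p_i/q_i)$ to the renormalised $(p_i''/q_i'')$, which is exactly the claimed statement.

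The main subtlety I anticipate is bookkeeping the two independent sources of ambiguity simultaneously and confirming they are compatible with the single transformation $A$ and its contragredient $(A^T)^{-1}$. The $\sigma$-rescalings act on rows and columns separately, whereas the $SL_2(\Z[\sigma])$-action couples the two paths in a dual fashion; I must verify that after absorbing the rescalings into the choice of normalisation $p_i''/q_i''$ and $r_j''/s_j''$, the residual discrepancy really is realised by a genuine $A\in SL_2$ acting as in Lemma~\ref{isom->equiv}, rather than by some map lying only in $GL_2$ or $PGL_2$. The normalisation condition $\det\begin{pmatrix} p_i & p_{i+1}\\ q_i & q_{i+1}\end{pmatrix}=1$ is precisely what pins the determinant of the transport matrix to $1$, so the key is to use that both paths are normalised to guarantee $A$ and its contragredient both have determinant one and integer Eisenstein entries. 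I would close by checking that the global $\pm 1$ ambiguity from Remark~\ref{rem18} is already subsumed into the allowed reparametrisations, so no extra case analysis is needed.
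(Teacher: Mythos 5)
Your proposal follows the paper's proof essentially verbatim: absorb the equivalence of $M$ and $M'$ into a choice of normalisations $p_i''/q_i''$, $r_j''/s_j''$ so that the scalar products agree as matrices, transport both pairs to the normal form with initial data $(1/0,0/1)$ via Lemma~\ref{isom->equiv}, invoke the uniqueness of Remark~\ref{unique for 10 01} to force the transported pairs to coincide, and take $A$ to be the composite of the two transports (the paper's $A=Y^{-1}X$). The only blemish is the interim formula $A=(B'')^{-1}B$, which as written neither sends the initial data to $(1/0,0/1)$ nor matches your row convention for $B$, but since you correctly describe the final $A$ as the composite of the two normalising transports, this is a notational slip rather than a gap.
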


\begin{proof}
First, as $M=(p_i/q_i)\cdot (r_j/s_j)$ is equivalent to $M'=(p_i'/q_i')\cdot (r_j'/s_j')$, there exists  normalisations
  $p_i''/ q_i''$  and  $ r_j''/ s_j''$ of  $p_i'/q_i'$ and    $r_j'/s_j'$  such that $M=( p_i''/q_i'')\cdot ( r_j''/ s_j'')$.

  Next, let $X,Y\in \SL_2(\Z[\sigma])$ be matrices such that
  $$ X\begin{pmatrix} p_0 & p_1 \\ q_0 & q_1 \end{pmatrix} =\begin{pmatrix}1&0 \\ 0& 1 \end{pmatrix},  \qquad
 Y\begin{pmatrix} p_0'' & p_1'' \\ q_0'' &  q_1'' \end{pmatrix} =\begin{pmatrix}1&0 \\ 0& 1 \end{pmatrix}.
 $$
 Consider the following two pairs of sequences $\big((\bar p_i/\bar q_i), (\bar r_j/\bar s_j)\big)$ and
 $\big(({\bar p_i}''/{\bar  q_i}''), ({\bar  r_j}''/{\bar s_j}'')\big)$ given by:
$$
\begin{pmatrix} \bar p_i \\ \bar q_i \end{pmatrix}= X \begin{pmatrix}  p_i \\  q_i \end{pmatrix}, \qquad
\begin{pmatrix} \bar r_j \\ \bar s_j \end{pmatrix}= (X^T)^{-1} \begin{pmatrix}  r_j \\  s_j \end{pmatrix}
$$
and
$$
\begin{pmatrix} {\bar p_i}'' \\ {\bar q_i}'' \end{pmatrix}= Y \begin{pmatrix}  p_i'' \\  q_i'' \end{pmatrix}, \qquad
\begin{pmatrix} {\bar r_j}'' \\ {\bar s_j}'' \end{pmatrix}= (Y^T)^{-1} \begin{pmatrix}  r_j'' \\  s_j'' \end{pmatrix}.
$$
By Remark~\ref{unique for 10 01}  these two pairs of sequences coincide, so we get the condition of the lemma
satisfied for $A=Y^{-1}X$.

\end{proof}

\begin{theorem}\label{sigma-sh}
The scalar product provides  a bijection between equivalence classes of the tame  $\SL_2(\Z[\sigma])$-tilings  and
pairs of paths in $\T$ considered up to simultaneous action of $\SL_2(\Z[\sigma])$ by $A\in \SL_2(\Z[\sigma])$ on one of the paths and $(A^T)^{-1}$ on the other.

\end{theorem}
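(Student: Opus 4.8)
The plan is to assemble the map and then verify it is well-defined, surjective, and injective, drawing entirely on the sequence of lemmas already established. First I would fix the candidate map $\Phi$ sending a pair of paths $\big((u_i),(v_j)\big)$ in $\T$ to the equivalence class of the tiling $(p_i/q_i)\cdot(r_j/s_j)$, where $p_i/q_i$ and $r_j/s_j$ are any choice of normalised representatives of the two paths. By Proposition~\ref{paths->tiling} this scalar product is a tame $\SL_2(\Z[\sigma])$-tiling, so the target of $\Phi$ is correct; the work is to show that $\Phi$ descends to the stated equivalence classes and is a bijection.

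Next I would check well-definedness with respect to the two equivalence relations. Independence of the choice of normalisation is precisely the content of Remark~\ref{rem18} together with Definition~\ref{def-equiv}: different normalisations of the same pair of paths produce tilings related by multiplication of rows and columns by powers of $\sigma$, hence equivalent tilings. Invariance under the simultaneous action is Lemma~\ref{isom->equiv}, which shows that replacing $(p_i/q_i)$ by $A(p_i/q_i)^T$ and $(r_j/s_j)$ by $(A^T)^{-1}(r_j/s_j)^T$ yields the identical tiling. Here I would invoke Remark~\ref{sl2-normalised} to observe that the action of $\SL_2(\Z[\sigma])$ carries normalised paths to normalised paths, so the normalised representatives of the transformed geometric paths are exactly $A(p_i/q_i)^T$ and $(A^T)^{-1}(r_j/s_j)^T$, and Lemma~\ref{isom->equiv} then applies verbatim.

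Surjectivity is immediate from Proposition~\ref{sl-scalar}, which exhibits, for any tame tiling $(m_{ij})$, two normalised paths whose scalar product is $(m_{ij})$, so every equivalence class of tilings lies in the image of $\Phi$. For injectivity I would suppose that $\Phi$ sends two pairs of paths to the same class, i.e.\ that $M=(p_i/q_i)\cdot(r_j/s_j)$ and $M'=(p_i'/q_i')\cdot(r_j'/s_j')$ are equivalent. Lemma~\ref{inj} then supplies a matrix $A\in\SL_2(\Z[\sigma])$ and normalisations of the second pair with $(p_i'',q_i'')^T=A(p_i,q_i)^T$ and $(r_j'',s_j'')^T=(A^T)^{-1}(r_j,s_j)^T$, which says exactly that the two underlying geometric path-pairs differ by the simultaneous action of $A$ and $(A^T)^{-1}$. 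Hence they represent the same class in the domain, and $\Phi$ is injective.

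The main obstacle here is bookkeeping rather than any new idea: one must keep careful track of the distinction between a geometric path (a sequence of vertices of $\T$, considered up to the group action) and its six normalisations, and confirm that the two equivalence relations — the $\SL_2(\Z[\sigma])$-action on paths and the normalisation-equivalence of tilings from Definition~\ref{def-equiv} — are matched so that Lemmas~\ref{isom->equiv} and~\ref{inj}, which are phrased for \emph{normalised} paths, descend correctly to geometric paths and to classes of tilings. All the genuine computational content has already been isolated into the preceding propositions and lemmas, so the proof of the theorem itself is a short synthesis of well-definedness, surjectivity, and injectivity.
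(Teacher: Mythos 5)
Your proposal is correct and follows essentially the same route as the paper: well-definedness via Definition~\ref{def-equiv} and Lemma~\ref{isom->equiv}, surjectivity via Proposition~\ref{sl-scalar}, and injectivity via Lemma~\ref{inj}. The extra care you take in distinguishing geometric paths from their normalisations (via Remark~\ref{sl2-normalised} and Remark~\ref{rem18}) is sound bookkeeping that the paper leaves implicit, but it does not change the argument.
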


\begin{proof}
 By Proposition~\ref{paths->tiling} the scalar product maps any pair of normalised paths in $\T$ to a  tame  $\SL_2(\Z[\sigma])$-tiling. We will prove that this map provides the required bijection.

First, notice that by Definition~\ref{def-equiv} different normalisations of the paths lead to  equivalent   $\SL_2(\Z[\sigma])$-tilings.
Next, by Lemma~\ref{isom->equiv} pairs of paths equivalent under the action of  $\SL_2(\Z[\sigma])$ result in the same  $\SL_2(\Z[\sigma])$-tiling. This implies that the scalar product is a well-defined map from
pairs of paths in $\T$  (up to simultaneous action of $\SL_2(\Z[\sigma])$) to equivalence classes of  $\SL_2(\Z[\sigma])$-tilings. This map is surjective by Proposition~\ref{sl-scalar} and injective by Lemma~\ref{inj}.

\end{proof}

\begin{remark}
\label{det-scal}
  In~\cite{Sh}, a similar bijection was constructed for $SL_2(\Z)$-tilings using determinant instead of the scalar product, i.e. using the map $((p_i/q_i),(r_j/s_j)) \mapsto (m_{i,j})$ given by
$$
m_{i,j}=p_is_j-q_ir_j.
$$
Our construction can also be formulated in these terms. Namely,  replace the path $(r_j,s_j)$ by the path $(s_j,-r_j)$. Then the scalar product is exactly replaced by computing the determinant. Notice that the transformation  $(r_j,s_j) \to (-s_j,r_j)$ is given  by the map
$z\to -1/z$, or, in other words, by the action of  $\begin{pmatrix} 0&-1\\ 1&0\end{pmatrix}\in \SL_2(\Z)$ applied to the path   $(r_j,s_j)$.

Note that the construction with determinant is also invariant under isometries, i.e. given a pair of normalised paths   $((p_i/q_i),(r_j/s_j))$ and   a matrix $A\in  \SL_2(\Z[\sigma])$, the paths  $(A(p_i/q_i),A(r_j/s_j))$ obtained by simultaneous action of $A$ on both paths define the same
 $\SL_2(\Z[\sigma])$-tiling as  $((p_i/q_i),(r_j,s_j))$ (cf. Lemma~\ref{isom->equiv}).

 \smallskip

Furthermore, given standard horospheres at all points of $\QQ(\sigma)$,
 the absolute value of the element $m_{i,j}=p_is_j-q_ir_j$ of the constructed  $\SL_2(\Z[\sigma])$-tiling is the $\lambda$-length of the arc connecting  $p_i/q_i$ with $r_j/s_j$ (see Theorem~\ref{lambda=l}).

\end{remark}

In view of Remark~\ref{det-scal}, the result of  Theorem~\ref{sigma-sh} can be reformulated as follows.

\begin{theorem}\label{sh-det}
The map   $((p_i/q_i),(r_j/s_j)) \mapsto (m_{i,j}=p_is_j-q_ir_j)$ provides a bijection between equivalence classes of  tame  $\SL_2(\Z[\sigma])$-tilings  and
pairs of paths in $\T$ considered up to simultaneous action of $\SL_2(\Z[\sigma])$  on both paths.

\end{theorem}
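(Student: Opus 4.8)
The plan is to deduce Theorem~\ref{sh-det} from Theorem~\ref{sigma-sh} by reparametrising the second path, exactly along the lines indicated in Remark~\ref{det-scal}. Recall that the scalar product of $(p_i/q_i)$ and $(r_j/s_j)$ is $p_ir_j+q_is_j$, whereas the map in Theorem~\ref{sh-det} produces $m_{i,j}=p_is_j-q_ir_j$. Setting $S=\begin{pmatrix}0&-1\\1&0\end{pmatrix}\in\SL_2(\Z[\sigma])$, the fraction $r_j/s_j$ is sent by $S$ to the vector $(-s_j,r_j)^T$, and the scalar product of $(p_i/q_i)$ with the transformed path $S\cdot(r_j/s_j)$ equals $-m_{i,j}$; since a global sign change lies within the equivalence of tilings (it is the renormalisation of Remark~\ref{rem18} with $k=3$), the determinant tiling of $(P,Q)$ and the scalar-product tiling of the $S$-transformed pair agree as equivalence classes. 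As $S\in\SL_2(\Z[\sigma])$, applying $S$ vertexwise to a normalised path again yields a normalised path (Remark~\ref{sl2-normalised}), so the operation $\Phi\colon\big((p_i/q_i),(r_j/s_j)\big)\mapsto\big((p_i/q_i),\,S\cdot(r_j/s_j)\big)$ is a bijection of the set of ordered pairs of normalised paths, with inverse obtained by applying $S^{-1}$ to the second path.

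The heart of the argument is the elementary $\SL_2$ identity
\begin{equation*}
(A^T)^{-1}=S\,A\,S^{-1},\qquad A\in\SL_2(\Z[\sigma]),
\end{equation*}
which is verified by a direct $2\times 2$ computation. Using it, I would check that $\Phi$ intertwines the two group actions appearing in the two statements: writing $A\cdot_d$ for the simultaneous action $(P,Q)\mapsto(AP,AQ)$ of Theorem~\ref{sh-det} and $A\cdot_s$ for the action $(P,Q)\mapsto\big(AP,(A^T)^{-1}Q\big)$ of Theorem~\ref{sigma-sh}, one has
\begin{equation*}
\Phi\big(A\cdot_d(P,Q)\big)=\big(AP,\,S\cdot(AQ)\big)=\big(AP,\,(SA)Q\big)=\big(AP,\,((A^T)^{-1}S)Q\big)=A\cdot_s\Phi(P,Q),
\end{equation*}
where the third equality is exactly the displayed identity. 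Thus $\Phi$ is an $\SL_2(\Z[\sigma])$-equivariant bijection carrying $\cdot_d$-orbits bijectively onto $\cdot_s$-orbits, and hence it descends to a bijection between pairs of paths modulo simultaneous $A$-action and pairs of paths modulo the $\big(A,(A^T)^{-1}\big)$-action.

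Finally I would assemble the pieces. By Theorem~\ref{sigma-sh} the scalar product induces a bijection from pairs of normalised paths modulo $\cdot_s$ onto equivalence classes of tame $\SL_2(\Z[\sigma])$-tilings. Composing this with the bijection of orbit spaces induced by $\Phi$ identifies the determinant map of Theorem~\ref{sh-det}, which by the first paragraph equals the scalar product precomposed with $\Phi$ (up to the global sign absorbed into the equivalence), as a bijection from pairs of normalised paths modulo $\cdot_d$ onto the same set of equivalence classes of tilings. I do not anticipate a serious obstacle: the only genuine content is the conjugation identity $(A^T)^{-1}=SAS^{-1}$, and the main care required is bookkeeping---correctly matching the two different equivalence relations and confirming that the overall sign introduced by $S$ is harmless because it already lies inside the tiling equivalence.
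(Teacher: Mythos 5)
Your proposal is correct and follows essentially the same route as the paper, which deduces Theorem~\ref{sh-det} from Theorem~\ref{sigma-sh} by replacing the second path with its image under $\begin{pmatrix}0&-1\\1&0\end{pmatrix}$ as described in Remark~\ref{det-scal}. Your explicit verification of the intertwining identity $(A^T)^{-1}=SAS^{-1}$ and of the harmlessness of the global sign is a welcome spelling-out of details the paper leaves implicit, but it is the same argument.
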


\begin{example}
  \label{ex-c1}
  The $SL_2(\Z[\sigma])$-tiling constructed in Example~\ref{ex} can be obtained using the procedure above from periodic paths $(u_i)=(\dots, \infty,0,1,\sigma,\infty,\dots)$, and $(v_j')=(\dots, \infty, \sigma, 1,0,\infty,\dots)$.

  \end{example}

\begin{definition}
  We say that two paths $(p_i,q_i)$ and $(r_j,s_j)$ are {\it coplanar in $\T$} if there exists a hyperbolic plane $\Pi$ containing a face of some fundamental tetrahedron in $\T$ and such that $p_i/q_i, r_j/s_j\in \Pi$ for all $i,j$.

\end{definition}

The following is an immediate corollary of Theorem~\ref{sh-det}.

\begin{cor}\label{real}
  Let  $(p_i,q_i)$ and $(r_j,s_j)$ be two normalised paths on  $\mathcal T$, and let  $(m_{i,j})=(p_is_j-q_ir_j)$ be the $\SL_2(\Z[\sigma])$-tiling defined by these paths. Then the following are equivalent:
  \begin{enumerate}
  \item[(a)] paths  $(p_i,q_i)$ and $(r_j,s_j)$  are coplanar;
  \item[(b)] $m_{i,j}\in \Z$ for all $i,j\in\Z$;
\item[(c)] there are two consecutive  rows $k,k+1$ and two consecutive columns $l,l+1$, such that $m_{i,l}, m_{i,l+1}\in \Z$, and  $m_{k,j},m_{k+1,j}\in \Z$ for all $i,j\in \Z$.

\end{enumerate}

\end{cor}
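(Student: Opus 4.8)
The plan is to prove the three implications $(a)\Rightarrow(b)\Rightarrow(c)\Rightarrow(a)$, using throughout two facts established earlier: that the determinant tiling $m_{i,j}=p_is_j-q_ir_j$ is left unchanged by the simultaneous action of a matrix $A\in\SL_2(\Z[\sigma])$ on both paths (Remark~\ref{det-scal}), and that $\mathcal T$ is the Schmidt arrangement obtained as the $Bi(3)$-orbit of the classical Farey graph $\mathcal F$ (Remark~\ref{arrangement}), where I write $\Pi_0$ for the plane containing $\mathrm{Im}(z)=0$. The idea is that being coplanar, being integer-valued, and having a single integer "cross" of two rows and two columns are all manifestations of the paths living (after a tiling-preserving move) on the classical plane $\Pi_0$, where all representatives can be taken in $\Z$.

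The implication $(b)\Rightarrow(c)$ is immediate, since if all $m_{i,j}\in\Z$ then any two consecutive rows and columns will do. The informative step is $(c)\Rightarrow(a)$, which I would prove by normalising one edge of the first path. As $u_ku_{k+1}$ is an edge, the matrix $A=\begin{pmatrix}p_k&p_{k+1}\\ q_k&q_{k+1}\end{pmatrix}^{-1}$ lies in $\SL_2(\Z[\sigma])$; applying it simultaneously to both paths preserves the tiling and sends $u_k\mapsto 1/0$, $u_{k+1}\mapsto 0/1$. In these coordinates $m_{k,j}=s_j$ and $m_{k+1,j}=-r_j$, so the integrality of rows $k,k+1$ forces $r_j,s_j\in\Z$ for all $j$, placing the second path in $\Pi_0$. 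For the first path I would use that $(r_l,s_l),(r_{l+1},s_{l+1})$ are now consecutive normalised vertices, so $\begin{pmatrix}s_l&-r_l\\ s_{l+1}&-r_{l+1}\end{pmatrix}$ has determinant $1$ and entries in $\Z$; since it carries $(p_i,q_i)^{T}$ to $(m_{i,l},m_{i,l+1})^{T}$, inverting over $\Z$ and invoking the integrality of columns $l,l+1$ gives $p_i,q_i\in\Z$ for all $i$. Hence both paths lie in $\Pi_0$ and are coplanar, and applying $A^{-1}$ restores the original configuration. The same computation yields every $m_{i,j}\in\Z$, so it in fact proves $(c)\Rightarrow(b)$ simultaneously.

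For $(a)\Rightarrow(b)$ I would run this reduction in reverse. Coplanarity places both paths on some plane $\Pi=g\Pi_0$ with $g\in Bi(3)$; choosing a normalisation in which an edge of the first path is carried to $1/0,0/1$ \emph{inside} $\Pi_0$ and transporting by the corresponding element of $\SL_2(\Z[\sigma])$ moves both paths onto the classical Farey graph $\mathcal F$. There the representatives may be taken in $\Z$, and since the tiling is unchanged by the move, $m_{i,j}\in\Z$ for all $i,j$.

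The main obstacle lies precisely in this last reduction, in the interaction between coplanarity and the six normalisations of a path, which differ by the unit twists $\sigma^{(-1)^ik}$ of Remark~\ref{18-similarity}. A real point of $\QQ(\sigma)$ has real representatives only when $k\equiv 0\pmod 3$, so a coplanar pair can be normalised with $\sigma$-twisted representatives for which $m_{i,j}\in\sigma^{m}\Z\setminus\Z$. The delicate point is therefore to verify that the reduction to $\Pi_0$ can be arranged to land on genuinely real (untwisted) representatives — equivalently, that the normalising edge is sent into $\Pi_0$ itself rather than into one of the six vertical fundamental planes through $0$ and $\infty$ — so that the entries lie in $\Z$ and not merely in $\sigma^{k}\Z$. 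I expect this compatibility of normalisations, rather than any of the linear-algebra computations above, to be where the real care is needed.
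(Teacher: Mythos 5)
Your proofs of (b)$\Rightarrow$(c) and (c)$\Rightarrow$(a) are correct and essentially coincide with the paper's argument: the paper routes (c)$\Rightarrow$(a) through the reconstruction of Proposition~\ref{sl-scalar} (after replacing $(r_j/s_j)$ by $(-s_j/r_j)$ so that the determinant tiling becomes a scalar product), which is exactly your computation --- normalise $u_k,u_{k+1}$ to $1/0,0/1$ by a simultaneous $\SL_2(\Z[\sigma])$ move, read off $r_j,s_j$ as the entries of the two integer rows, then recover $(p_i,q_i)$ by inverting an integer determinant-one matrix built from the two integer columns. Your observation that this argument delivers (c)$\Rightarrow$(b) at the same time is also correct.

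The obstacle you flag in (a)$\Rightarrow$(b) is genuine, and it is not merely a difficulty you failed to overcome: for the literal statement the implication is false. Take both paths inside the vertical plane over $\R$ with integer representatives, and then replace the first path by the equivalent normalisation $\bigl(\sigma^{(-1)^i}p_i/\sigma^{(-1)^i}q_i\bigr)$ of Remark~\ref{18-similarity}; the pair is still normalised and still coplanar, but every nonzero $m_{i,j}$ is multiplied by $\sigma^{\pm1}$ and so leaves $\Z$. Coplanarity depends only on the underlying points, whereas (b) and (c) depend on the chosen representatives, so (a) can only imply (b) after adjusting the normalisation. The paper's own proof of (a)$\Rightarrow$(b) has the same lacuna: it passes from ``all the points $A(p_i/q_i)$ are real'' to integrality of the entries, conflating points with representatives. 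The repair is the one you anticipate: after moving both paths into the real plane each representative has the form $\sigma^{k_i}(a_i,b_i)$ with $a_i,b_i\in\Z$, the normalisation condition forces $k_i+k_{i+1}\equiv 0\pmod 3$, hence $k_i\equiv(-1)^ik_0\pmod 3$, and the renormalisation by $\sigma^{(-1)^ik}$ with $k\equiv-k_0\pmod 3$ makes all representatives integral. So the correct statement is that (a) is equivalent to (b) (and to (c)) holding for \emph{some} normalisation of the pair of paths, i.e.\ for some tiling in the equivalence class of Definition~\ref{def-equiv}; with that reading your argument closes completely, and as literally stated (a)$\Rightarrow$(b) cannot be proved because it is not true.
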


\begin{proof}
\begin{itemize}
\item``(a)$\Rightarrow$(b)'': Given two paths lying in a plane $\Pi$, there exists  $A\in \SL_2(\Z[\sigma])$ taking $\Pi$ to the vertical plane containing the real line. Then all the points $A(p_i/q_i)$ and $A(r_j/s_j)$ are real, so the resulting paths lie in a copy of the Farey graph contained in $A(\Pi)$, which implies (b).

\item``(b)$\Rightarrow$(c)'': This is a trivial implication.

\item``(c)$\Rightarrow$(a)'': We show that condition (c) implies that both paths  $(p_i/q_i)$ and $(r_j/s_j)$ are real rational numbers (and hence (a) follows). Showing this is equivalent to proving that both  $(p_i/q_i)$ and $(-s_j/r_j)$ are real. In terms of the latter pair of paths, the $SL_2(\Z[\sigma])$-tiling $(m_{i,j})$ is the scalar product of paths. 
Now,  we apply the construction from Proposition~\ref{sl-scalar}, and get that $p_i/q_i$ and $-s_j/r_j$ are real rationals.
\end{itemize}
\end{proof}

\section{Paths in $\mathcal T$ and sequences in $\Z[\sigma]$}
\label{sec-path}

In this section we show that paths in $\mathcal T$ can be parameterised by (bi-infinite) sequences of elements of $\Z[\sigma]$. This relates the results of the previous section to classification of $SL_2$-tilings obtained in~\cite{BR}.

\begin{definition}
A path $(v_i)_{i=m}^{n}$ in $\mathcal T$  represented by irreducible fractions $v_i=p_i/q_i$ for all $i$ is called  {\it skew-normalised}
if 
$$
\det
\begin{pmatrix} p_i& p_{i+1} \\ q_i&q_{i+1}
\end{pmatrix}
=(-1)^i
$$
for all  admissible $i$.
\end{definition}

Similarly to the case of normalised paths, the property of being skew-normalised is preserved under the action of $\SL_2(\Z[\sigma])$.

\begin{definition}
 \label{sin_T}
 Consider a skew-normalised path $(v_{i-1},v_{i},v_{i+1})$ in $\mathcal T$
with $v_i=p_i/q_i$ be irreducible fractions in $\Z[\sigma]$.
Assume that
$$
\begin{pmatrix} p_{i+1} \\ q_{i+1}
\end{pmatrix}=
\begin{pmatrix} p_{i-1}& p_{i} \\ q_{i-1}&q_{i}
\end{pmatrix} \cdot
\begin{pmatrix} 1 \\ a
\end{pmatrix}.
$$
We say that $a$ is the {\it oriented $\T$-angle of $v_{i-1}v_{i}v_{i+1}$} and denote it by $\tsin (v_{i-1}v_{i}v_{i+1})$.
\end{definition}

\begin{prop}
\label{a}
\begin{itemize}
\item[(a)] Given a skew-normalised path $(v_{i-1},v_{i},v_{i+1})$  in $\mathcal T$,
  there exists a unique $a\in\Z[\sigma]$
such that
$
\tsin (v_{i-1}v_{i}v_{i+1})=a.
$
\item[(b)]
For every skew-normalised path $(v_{i-1},v_{i})$ in $\mathcal T$ and every $a\in\Z[\sigma]$ there exists
a unique  $v_{i+1}\in \mathcal T_0$  such that $(v_{i-1},v_{i},v_{i+1})$ is a skew-normalised path and
$
\tsin (v_{i-1}v_{i}v_{i+1})=a.
$

\item[(c)]
For every skew-normalised path $(v_{i},v_{i+1})$ in $\mathcal T$ and every $a\in\Z[\sigma]$ there exists
a unique  $v_{i-1}\in \mathcal T_0$  such that $(v_{i-1},v_{i},v_{i+1})$ is a skew-normalised path and
$
\tsin (v_{i-1}v_{i}v_{i+1})=a.
$

\end{itemize}
\end{prop}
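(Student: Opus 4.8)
The plan is to treat all three parts as facets of a single linear-algebraic fact. Because the path is skew-normalised, the transition matrix $M_i=\begin{pmatrix} p_{i-1} & p_i \\ q_{i-1} & q_i\end{pmatrix}$ has determinant $(-1)^{i-1}$, a unit of $\Z[\sigma]$, and is therefore invertible over $\Z[\sigma]$. The defining relation of the $\mathcal T$-angle is nothing but the continuant-type recursion $(p_{i+1},q_{i+1})^T = M_i\,(1,a)^T$, i.e. $p_{i+1}=p_{i-1}+ap_i$ and $q_{i+1}=q_{i-1}+aq_i$. Parts (a), (b), (c) then amount to solving this one relation for $a$, for $v_{i+1}$, and for $v_{i-1}$ respectively.

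For (a) I would invert $M_i$ and read off $(1,a)^T=M_i^{-1}(p_{i+1},q_{i+1})^T$. Since $M_i^{-1}=(-1)^{i-1}\begin{pmatrix} q_i & -p_i \\ -q_{i-1} & p_{i-1}\end{pmatrix}$ has entries in $\Z[\sigma]$, both coordinates of the product lie in $\Z[\sigma]$, giving existence and uniqueness of $a$ at once. The single point requiring verification is that the first coordinate of this product is exactly $1$, not merely some unit: here the \emph{second} skew-normalisation condition enters, for the first coordinate equals $(-1)^{i-1}(q_ip_{i+1}-p_iq_{i+1})=(-1)^{i-1}\bigl(-(-1)^i\bigr)=1$. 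The angle is then the second coordinate, $a=(-1)^{i-1}(p_{i-1}q_{i+1}-q_{i-1}p_{i+1})\in\Z[\sigma]$.

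For (b) and (c) I would run the recursion forwards and backwards. In (b), I set $(p_{i+1},q_{i+1})^T:=M_i(1,a)^T$ and verify by a one-line telescoping computation that $\det\begin{pmatrix} p_i & p_{i+1} \\ q_i & q_{i+1}\end{pmatrix}=p_iq_{i-1}-p_{i-1}q_i=(-1)^i$, so skew-normalisation at index $i$ holds automatically. Because this determinant is a unit, any common divisor of $p_{i+1}$ and $q_{i+1}$ divides a unit and is therefore a unit, so the fraction is irreducible and defines a genuine vertex $v_{i+1}\in\mathcal T_0=\QQ(\sigma)$; since $|{\det}|=1$, Proposition~\ref{edges} then guarantees that $v_iv_{i+1}$ is an edge, so $(v_{i-1},v_i,v_{i+1})$ is indeed a path. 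Uniqueness follows from the invertibility of $M_i$ together with (a): skew-normalisation singles out a unique representative of any candidate vertex (rescaling a representative by a unit $\sigma^k$ multiplies the determinant by $\sigma^k$, and only $k=0$ preserves the value $(-1)^i$), and (a) then recovers $a$ from that representative, so $a\mapsto v_{i+1}$ is a bijection. Part (c) is the mirror image: I set $(p_{i-1},q_{i-1})^T:=(p_{i+1}-ap_i,\ q_{i+1}-aq_i)^T$ and check $\det\begin{pmatrix} p_{i-1} & p_i \\ q_{i-1} & q_i\end{pmatrix}=p_{i+1}q_i-p_iq_{i+1}=(-1)^{i-1}$ by the same cancellation, with irreducibility, the edge condition and uniqueness following verbatim.

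I do not anticipate a genuine obstacle: the whole statement is powered by the invertibility of the skew-normalised transition matrix over $\Z[\sigma]$, and every remaining verification is a short determinant identity. The only step that genuinely needs care is the sign bookkeeping in (a)—confirming that the first entry of $M_i^{-1}(p_{i+1},q_{i+1})^T$ is precisely $1$—because it is exactly this identity that makes the ansatz $(1,a)^T$ consistent and simultaneously records that both neighbouring edges are correctly skew-normalised.
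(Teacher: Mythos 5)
Your proof is correct and follows essentially the same route as the paper: both arguments rest on the invertibility over $\Z[\sigma]$ of the transition matrix $\begin{pmatrix} p_{i-1}& p_{i} \\ q_{i-1}&q_{i}\end{pmatrix}$ and use the two skew-normalisation determinant conditions to pin down the first coordinate of $(1,a)^T$ (the paper writes this as solving for an unknown matrix $\begin{pmatrix} 0&\xi \\ 1&\alpha\end{pmatrix}$ and deducing $\xi=1$, which is the same computation seen from the other side). Your treatment of (b) and (c) — checking the determinant telescopes, irreducibility, the edge condition via Proposition~\ref{edges}, and uniqueness of the skew-normalised representative — is in fact more explicit than the paper's one-line reduction to equation~(\ref{aa}), and all the sign bookkeeping checks out.
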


\begin{proof}
To prove~(a),  note that
\begin{equation}
\label{aa}  
\begin{pmatrix} p_{i}&p_{i+1} \\ q_{i}&q_{i+1}
\end{pmatrix}=
\begin{pmatrix} p_{i-1}& p_{i} \\ q_{i-1}&q_{i}
\end{pmatrix} \cdot
\begin{pmatrix} 0&\xi \\ 1&\alpha
\end{pmatrix}
\end{equation}
for some $\alpha,\xi \in \C$ which are defined uniquely.

Since the path  $(v_{i-1},v_{i},v_{i+1})$ is skew-normalised,
$$
\det
\begin{pmatrix} p_{i-1}& p_{i} \\ q_{i-1}&q_{i}
\end{pmatrix}=-1, \qquad
\det\begin{pmatrix} p_{i}&p_{i+1} \\ q_{i}&q_{i+1}
\end{pmatrix}=1,$$
which implies that $\xi=1$. Equation~(\ref{aa}) can now be rewritten as
$$
\begin{pmatrix} p_{i-1}&p_{i+1} \\ q_{i-1}&q_{i+1}
\end{pmatrix}=
\begin{pmatrix} p_{i-1}& p_{i} \\ q_{i-1}&q_{i}
\end{pmatrix} \cdot
\begin{pmatrix} 1&1 \\ 0&\alpha
\end{pmatrix}.
$$

Therefore, we conclude 
$$ \tsin  (v_{i-1}v_{i}v_{i+1})=\alpha=
\frac{\det\begin{pmatrix}  p_{i-1}& p_{i+1} \\ q_{i-1}&q_{i+1} \end{pmatrix}}{\det\begin{pmatrix}  p_{i-1}& p_{i} \\ q_{i-1}&q_{i} \end{pmatrix}}= (-1)^{i-1}\det\begin{pmatrix}  p_{i-1}& p_{i+1} \\ q_{i-1}&q_{i+1} \end{pmatrix} ,
$$
which proves (a).

Parts (b) and (c) now follow from~(\ref{aa}) with $\xi=1$ and $\alpha=a$.

\end{proof}

\begin{definition}
Given a skew-normalised path  $(v_i)$,  define $a_i=\tsin  (v_{i-1}v_{i}v_{i+1})$. We call the sequence $(a_i)$  the {\it $\mathcal T$-angle sequence} of the path $(v_i)$.

\end{definition}

\begin{remark}
 $\mathcal T$-angle  sequence is a counterpart of  {\it itinerary} appearing in~\cite{Sh} and {\it  quiddity sequence} appearing in the context of friezes~\cite{CC}. This sequence (up to change of certain signs) also appears in~\cite[Section 3]{BR}.

\end{remark}

\begin{prop}\label{a->path}
\begin{itemize}  
\item[(a)] Every bi-infinite sequence $(a_i)$, $a_i\in \Z[\sigma]$, is a $\mathcal T$-angle sequence of some skew-normalised path.
\item[(b)] 
  Two skew-normalised paths $(p_i/q_i)$ and $(r_j/s_j)$ have the same $\mathcal T$-angle sequence if and only if
the paths are equivalent under $SL_2(\Z[\sigma])$-action.  
  
\end{itemize}  
\end{prop}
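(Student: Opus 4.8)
The plan is to reduce everything to Proposition~\ref{a}, which allows us to extend a skew-normalised edge by a prescribed $\mathcal T$-angle in either direction and guarantees uniqueness at each step. For part~(a), I would start from the skew-normalised edge $v_0=1/0$, $v_1=0/1$, whose representative matrix has determinant $(-1)^0=1$, and run a two-sided recursion. Going forward, for each $i\ge 1$ the pair $(v_{i-1},v_i)$ together with $a_i$ determines a unique $v_{i+1}\in\mathcal T_0$ with $\tsin(v_{i-1}v_iv_{i+1})=a_i$ keeping the path skew-normalised, by Proposition~\ref{a}(b); this produces $v_2,v_3,\dots$. Going backward, for each $i\le 0$ the pair $(v_i,v_{i+1})$ together with $a_i$ determines a unique $v_{i-1}\in\mathcal T_0$ by Proposition~\ref{a}(c), producing $v_{-1},v_{-2},\dots$. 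Since every consecutive $2\times 2$ determinant equals $\pm 1$, consecutive vertices are joined by edges of $\mathcal T$ (Proposition~\ref{edges}), so $(v_i)$ is a genuine skew-normalised path whose $\mathcal T$-angle sequence is, by construction, exactly $(a_i)$.

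For part~(b), I would first record that the $\mathcal T$-angle is an $SL_2(\Z[\sigma])$-invariant. By the formula obtained in the proof of Proposition~\ref{a},
$$\tsin(v_{i-1}v_iv_{i+1})=(-1)^{i-1}\det\begin{pmatrix}p_{i-1}&p_{i+1}\\ q_{i-1}&q_{i+1}\end{pmatrix},$$
and replacing each representative $\binom{p_j}{q_j}$ by $A\binom{p_j}{q_j}$ with $A\in SL_2(\Z[\sigma])$ multiplies this determinant by $\det A=1$. Hence if $(r_j/s_j)$ is obtained from $(p_i/q_i)$ by an $SL_2(\Z[\sigma])$-action, the two angle sequences coincide, which settles the ``if'' direction.

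For the converse, suppose the two skew-normalised paths share the angle sequence $(a_i)$. The initial-edge matrices $P=\begin{pmatrix}p_0&p_1\\ q_0&q_1\end{pmatrix}$ and $R=\begin{pmatrix}r_0&r_1\\ s_0&s_1\end{pmatrix}$ both have determinant $(-1)^0=1$, so they lie in $SL_2(\Z[\sigma])$ and $A:=RP^{-1}\in SL_2(\Z[\sigma])$. By construction $A$ maps the initial edge of $(p_i/q_i)$ onto that of $(r_j/s_j)$. The image path $A\cdot(p_i/q_i)$ is again skew-normalised (determinants are preserved) and, by the invariance just established, carries angle sequence $(a_i)$. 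Thus $A\cdot(p_i/q_i)$ and $(r_j/s_j)$ are skew-normalised paths with the same angle sequence and the same edge at indices $0,1$; propagating this equality in both directions via the uniqueness in Proposition~\ref{a}(b),(c) shows they coincide vertex by vertex, so the two original paths are $SL_2(\Z[\sigma])$-equivalent.

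I expect no deep difficulty here: the content is entirely carried by Proposition~\ref{a}. The only point requiring care is the bookkeeping, namely keeping the alternating signs $(-1)^i$ consistent across the two-sided recursion and checking that uniqueness in Proposition~\ref{a} genuinely pins down the whole bi-infinite path from a single edge together with the angle data; past that, the argument is formal.
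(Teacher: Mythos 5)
Your proposal is correct and follows essentially the same route as the paper: part (a) is the two-sided recursion afforded by Proposition~\ref{a}(b),(c), and part (b) combines the $SL_2(\Z[\sigma])$-invariance of the $\mathcal T$-angle with the fact that an initial skew-normalised edge together with the angle sequence determines the whole path uniquely. The only cosmetic difference is that the paper normalises both paths to the standard edge $(\frac{1}{0},\frac{0}{1})$ whereas you transport one initial edge directly onto the other via $A=RP^{-1}$; these are interchangeable.
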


\begin{proof}
  (a) follows from parts (b) and (c) of Proposition~\ref{a}.
  
To prove (b), note that invariance of  $\mathcal T$-angle sequence under the action of  $SL_2(\Z[\sigma])$
 follows immediately from Definition~\ref{sin_T}: left multiplication by the same matrix does not affect the validity of 
 the equation.

 Conversely, consider a skew-normalised  path $(v_i)$.  Parts (b) and (c) of Proposition~\ref{a} imply that the fractions $v_0=p_0/q_0$ and $v_1=p_1/q_1$ together with the sequence $(a_i)$ uniquely define the whole  path $(v_i)$. 
Note that there exists  $A\in SL_2(\Z[\sigma])$ taking $(v_0,v_1)$ to
 $(\frac{1}{0},\frac{0}{1})$. Therefore, given two skew-normalised paths $(v_i)$ and $(v_i')$, there exists an element of $SL_2(\Z[\sigma])$ taking  $(v_i)$ to $(v_i')$, which completes the proof.

\end{proof}  

\begin{remark}
\label{sim}  
Given a path $(v_i)$ in $\mathcal T$, there exist $6$ its skew-normalisations obtained from one of them by multiplying both numerator and denominator of $v_i$ by $\sigma^{(-1)^i\cdot k}$  (cf. Remark~\ref{18-similarity}). 
 $\mathcal T$-angle sequence of the new path is then obtained by multiplying  $a_i$ by   $\sigma^{(-1)^{i+1}\cdot 2k}$.

\end{remark}

\begin{definition}
  Consider two sequences $(a_i)$ and $(b_i)$ where $a_i,b_i\in \Z[\sigma]$. We say that $(a_i)$ and $(b_i)$ are   {\it equivalent}
 if there exists  $k\in \{-1,1\}$ such that  $b_i=a_i \sigma^{(-1)^{i+1}\cdot 2k}$.

\end{definition}  

An equivalence class of sequences corresponds to a path in $\mathcal T$ considered up to the action of $PSL_2(\Z[\sigma])$.

We introduce one more notion with the aim to  reformulate Theorem~\ref{sh-det} in terms of  $\mathcal T$-angle sequences. Namely, the paths from   Theorem~\ref{sh-det} are replaced by their $\mathcal T$-angle sequences,  and the relative position of the paths with respect to each other is indicated by a matrix in $SL_2(\Z[\sigma])$.

\begin{definition}
  Consider a triple $\big( (a_i),(b_j),X\big)$, where $(a_i)$ and $(b_j)$ are equivalence cla\-sses of sequences,   $a_i,b_j\in \Z[\sigma]$, and $X\!\in SL_2(\Z[\sigma])$. We say that two triples  $\big( (a_i),(b_j),X\big)$ and  $\big( (\tilde a_i),(\tilde b_j),\widetilde X\big)$ are {\it equivalent} if the following two conditions hold:
\begin{itemize}
\item[(1)]  $(a_i)$ is equivalent to $(\tilde a_i)$, and $(b_i)$ is equivalent to $(\tilde b_i)$;
\item[(2)]  if $\tilde a_0=a_0 \sigma^{-2k}$ and $\tilde b_0=b_0 \sigma^{-2l}$, then
  $\widetilde X= \pm \begin{pmatrix}\sigma^l&0\\0 &\sigma^{-l}    \end{pmatrix}X \begin{pmatrix}\sigma^{-k}&0\\0 &\sigma^{k}    \end{pmatrix}$.  

\end{itemize}
\end{definition}  

\begin{remark}\label{18}
  Substituting $X$ with $-X$ corresponds to changing the sign of  all numerators and denominators in one of the normalised paths. In particular, generic equivalence class of triples consists of 18 elements. 
  
\end{remark}  

\begin{theorem}\label{classes}
There exists a bijection between equivalence classes of  tame  $\SL_2(\Z[\sigma])$-tilings  and equivalence classes of
triples $\big( (a_i),(b_j),X\big)$, where $(a_i)$ and $(b_j)$ are  sequences,   $a_i,b_j\in \Z[\sigma]$, and $X\in SL_2(\Z[\sigma])$. 

\end{theorem}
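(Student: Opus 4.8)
The plan is to deduce Theorem~\ref{classes} by composing Theorem~\ref{sh-det} with a bijection between pairs of paths in $\mathcal T$ (taken up to simultaneous $SL_2(\Z[\sigma])$-action) and equivalence classes of triples. Since Theorem~\ref{sh-det} already identifies equivalence classes of tame tilings with pairs of paths up to simultaneous action, it suffices to build this latter bijection. The $\mathcal T$-angle sequences of Section~\ref{sec-path} will encode the two paths individually, while the matrix $X$ records their relative position, which the angle sequences alone forget.

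\textbf{Forward map.} Given a pair of paths, I first choose skew-normalisations $(p_i/q_i)$ and $(r_j/s_j)$ and read off their $\mathcal T$-angle sequences $(a_i)$ and $(b_j)$ as in Definition~\ref{sin_T}, using Proposition~\ref{a}. Writing the initial frames $M_P=\begin{pmatrix} p_0 & p_1\\ q_0 & q_1\end{pmatrix}$ and $M_Q=\begin{pmatrix} r_0 & r_1\\ s_0 & s_1\end{pmatrix}$, which lie in $SL_2(\Z[\sigma])$ since skew-normalisation forces $\det=(-1)^0=1$, I set $X=(M_P^{-1}M_Q)^T$ and output the triple $\big((a_i),(b_j),X\big)$. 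The transpose is deliberate: it is precisely what makes a re-normalisation of the first path act on $X$ by right multiplication, and one of the second path by left multiplication, in agreement with condition (2).

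\textbf{Well-definedness.} Two checks are needed. A simultaneous action by $C\in SL_2(\Z[\sigma])$ multiplies both frames on the left by $C$, leaving $M_P^{-1}M_Q$ and hence $X$ unchanged, and leaves the angle sequences unchanged by the $SL_2$-invariance in Proposition~\ref{a->path}(b); so the triple is genuinely an invariant of the pair. Next, changing the skew-normalisation of the first path by the parameter $k$ multiplies $M_P$ on the right by $\mathrm{diag}(\sigma^k,\sigma^{-k})$ (Remark~\ref{18-similarity}), which replaces $(a_i)$ by the equivalent sequence $a_i\sigma^{(-1)^{i+1}2k}$ (Remark~\ref{sim}) and, after the transpose, multiplies $X$ on the right by $\mathrm{diag}(\sigma^{-k},\sigma^k)$; the parameter $l$ of the second path multiplies $X$ on the left by $\mathrm{diag}(\sigma^l,\sigma^{-l})$, and the special value $k=3$ (uniform multiplication by $-1=\sigma^3$) produces the sign $X\mapsto -X$ with the sequences unchanged (cf.\ Remark~\ref{18}). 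These are exactly conditions (1) and (2), together with the $\pm$, in the definition of equivalence of triples, so the map descends to equivalence classes.

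\textbf{Bijectivity and the main obstacle.} For surjectivity I invert the construction: given a triple, Proposition~\ref{a->path}(a) produces skew-normalised paths $P_0,Q_0$ realising $(a_i),(b_j)$ with standard initial frame $I$, and then $(P_0,\, X^T\!\cdot Q_0)$ is a pair whose associated triple is the given one. Injectivity follows because Proposition~\ref{a->path}(b) recovers each path from its angle sequence up to $SL_2(\Z[\sigma])$, and matching the values of $X$ (up to the diagonal and sign ambiguity already absorbed into re-normalisation) forces the two pairs to differ by a single simultaneous action. Composing this bijection with Theorem~\ref{sh-det} then yields Theorem~\ref{classes}. I expect the main obstacle to be precisely the bookkeeping in the well-definedness step: one must verify that the independent ambiguities in the triple (re-normalising each path and the global sign) correspond under the forward map to exactly the diagonal conjugation and the $\pm$ of condition (2), and it is the transpose in the definition of $X$ that aligns the left/right placement of the diagonal factors with the roles of $k$ and $l$.
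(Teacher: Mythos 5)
Your proof is correct and follows essentially the same route as the paper: both factor through Theorem~\ref{sh-det} and encode each path by its $\mathcal T$-angle sequence (via Proposition~\ref{a->path}) together with a matrix $X$ recording the relative position of the two initial frames, with the equivalence of triples matching the six skew-normalisations of each path. Your intrinsic formula $X=(M_P^{-1}M_Q)^T$ makes the equivariance bookkeeping somewhat cleaner than the paper's device of fixing canonical initial frames via $\arg(a_0)$ and $\arg(b_0)$, but the substance of the argument is the same.
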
  

\begin{proof}
  Given a triple  $\big( (a_i),(b_j),X\big)$ we construct an $SL_2(\Z[\sigma])$-tiling as follows.
  According to Proposition~\ref{a->path}, there exists a unique skew-normalised path $(u_i)$   with $\mathcal T$-angle sequence $(a_i)$ and $u_0=p_0/q_0$, $u_1=p_1/q_1$, where
$$
\frac{p_0}{q_0}=\begin{cases} 1/0 & \text{ if $0\le \arg(a_0)<2\pi/3 $},\\
                              \sigma^{-1}/0 & \text{ if $2\pi/3\le \arg(a_0)<4\pi/3 $},\\
                              \sigma^{-2}/0 & \text{ if $4\pi/3\le \arg(a_0)<2\pi $};\\
                            \end{cases}
\qquad \quad
\frac{p_1}{q_1}=\frac{0}{p_0^{-1}}.                            
$$
Similarly,  there exists a unique skew-normalised path $(v_j)$ with $\mathcal T$-angle sequence $(b_j)$ and $v_0=r_0/s_0$, $v_1=r_1/s_1$, where
$$
\begin{pmatrix} r_0 &r_1 \\ s_0&s_1
\end{pmatrix}  
=\begin{cases} \begin{pmatrix} x_{1,1} & x_{1,2}\\x_{2,1}&x_{2,2}    \end{pmatrix}     & \text{ if $0\le \arg(b_0)<2\pi/3 $},\\
         \begin{pmatrix}  \sigma^{-1}x_{1,1} &  \sigma x_{1,2}\\ \sigma^{-1}x_{2,1}&  \sigma x_{2,2}    \end{pmatrix}  & \text{ if $2\pi/3\le \arg(b_0)<4\pi/3 $},\\
       \begin{pmatrix}  \sigma^{-2}x_{1,1} &  \sigma^2 x_{1,2}\\ \sigma^{-2}x_{2,1}&  \sigma^2 x_{2,2}    \end{pmatrix}                         & \text{ if $4\pi/3\le \arg(b_0)<2\pi $}.\\
                            \end{cases}
$$

 We can now construct a normalised path $(u_i')$ according to the following rule: 
  if $u_i=p_i/q_i$ then
  $$
  u_i'=\begin{cases}
\frac{p_i}{q_i} & \text{ if $i\equiv 0,1 \pmod 4   $} \\
\frac{-p_i}{-q_i} & \text{  if  $i\equiv 2,3 \pmod 4  $ }\\
    \end{cases}
  $$
Similarly, we use the skew-normalised path $(v_j)$ to construct   a normalised path $(v_j')$.
Now, a tame $SL_2(\Z[\sigma])$-tiling is defined by $m_{i,j}=p_i's_j'-q_i'r_j'$, where $u_i'=p_i'/q_i', v_j'=r_j'/s_j'$ (see Remark~\ref{det-scal}).

If sequences $(\tilde a_i)$ and $(a_i)$ are equivalent, then,  by Remark~\ref{sim}, the corresponding skew-normalised paths $(\tilde u_i)$ and $(u_i)$ are distinct skew-normalisations of the same path in $\mathcal T$. Hence, $(\tilde u_i')$
and $(u_i')$ are distinct normalisations of the same path in $\mathcal T$. Since $X$ is substituted with $\widetilde X$, the sequence $(b_j)$ still leads to the same path $(v_j)$  in $\mathcal T$. Therefore, by Theorem~\ref{sh-det}, 
the triple  $\big( (\tilde a_i),(\tilde b_j),\widetilde X\big)$ leads to an equivalent  $SL_2(Z[\sigma])$-tiling. 
Similarly, changing $(b_j)$ (and $X$) also leads  an equivalent  
$SL_2(Z[\sigma])$-tiling.

 Therefore, we have constructed a well-defined map from  equivalence classes of triples  $\big( (a_i),(b_j),X\big)$  to  equivalence classes of $SL_2(\Z[\sigma])$-tilings.

 To prove surjectivity of this map, consider any two paths in $\mathcal T$ and the corresponding class of  $SL_2(\Z[\sigma])$-tilings. Choose any skew-normalisations $(u_i)$ and $(v_j)$ of these paths and set $(a_i)$ and $(b_j)$ to be the corresponding $\mathcal T$-angle sequences. Set $X$ to be the element of 
 $SL_2(\Z[\sigma])$ taking $u_0$ to $v_0$ and  $u_1$ to $v_1$. Then the triple  $\big( (a_i),(b_j),X\big)$  produces an  $SL_2(\Z[\sigma])$-tilings from the required equivalence class. This proves surjectivity. 

 Notice that taking different skew-normalisations of the paths in the construction above (and, hence, the corresponding matrix $X$) leads to an equivalent triple. This shows injectivity.

\end{proof}

The next theorem is a very particular case of Proposition~3 of~\cite{BR} where it is proved in purely linear-algebraic terms (cf. also~\cite[Theorem 2]{MGOT}).

\begin{theorem}
 \label{bijection} 
There exists a bijection between the  tame  $\SL_2(\Z[\sigma])$-tilings  and 
triples $\big( (a_i),(b_j),X\big)$, where $(a_i)$ and $(b_j)$ are  sequences,   $a_i,b_j\in \Z[\sigma]$, and $X\in SL_2(\Z[\sigma])$. 

\end{theorem}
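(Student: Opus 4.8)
The plan is to exhibit an explicit pair of mutually inverse maps between individual tame $\SL_2(\Z[\sigma])$-tilings and individual triples, refining the bijection on equivalence classes established in Theorem~\ref{classes}. The forward map sends a triple $\big((a_i),(b_j),X\big)$ to a tiling as follows. First I would use Proposition~\ref{a}(b),(c) together with Proposition~\ref{a->path} to build the unique skew-normalised path $(u_i)$ whose $\mathcal T$-angle sequence is $(a_i)$ and whose initial edge is \emph{fixed} to $u_0=1/0$, $u_1=0/1$; this gauge choice is legitimate since $\det\begin{pmatrix}1&0\\0&1\end{pmatrix}=1=(-1)^0$. Next I would build the unique skew-normalised path $(v_j)$ with $\mathcal T$-angle sequence $(b_j)$ whose initial edge $(v_0,v_1)$ is read off from the columns of $X$, namely $\begin{pmatrix}r_0\\s_0\end{pmatrix}$ and $\begin{pmatrix}r_1\\s_1\end{pmatrix}$ are the first and second columns of $X$; since $\det X=1$ this is again a valid skew-normalised edge, and $X$ thus records the position of the second path relative to the gauge-fixed first one. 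Passing from skew-normalised to normalised paths by the alternating sign rule (negate numerator and denominator exactly when the index is $\equiv 2,3\pmod 4$) and forming $m_{ij}=p_i's_j'-q_i'r_j'$ produces, by Proposition~\ref{paths->tiling} together with Remark~\ref{det-scal}, a tame $\SL_2(\Z[\sigma])$-tiling.

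The inverse map runs the same recipe backwards. Given a tame tiling $(m_{ij})$, I would invoke Proposition~\ref{sl-scalar} with Remark~\ref{unique for 10 01} (and Remark~\ref{det-scal} to match the determinant convention) to recover the unique pair of normalised paths $(p_i'/q_i')$ and $(r_j'/s_j')$ satisfying $m_{ij}=p_i's_j'-q_i'r_j'$ and normalised so that the first path has initial edge $u_0'=1/0$, $u_1'=0/1$. Reversing the sign rule yields the corresponding skew-normalised paths $(u_i)$ and $(v_j)$, with the indices $0,1$ left unchanged, and Proposition~\ref{a} lets me read off the $\mathcal T$-angle sequences $a_i=\tsin (u_{i-1}u_iu_{i+1})$ and $b_j=\tsin (v_{j-1}v_jv_{j+1})$. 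Finally I would set $X=\begin{pmatrix}r_0&r_1\\s_0&s_1\end{pmatrix}$, which lies in $\SL_2(\Z[\sigma])$ because the second path is skew-normalised at $j=0$; by construction the columns of $X$ are exactly the initial edge of $(v_j)$.

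That these two maps are mutually inverse is then a matter of assembling uniqueness statements already in hand: a skew-normalised path is uniquely determined by its initial edge together with its $\mathcal T$-angle sequence (Propositions~\ref{a} and~\ref{a->path}), so rebuilding the paths from $\big((a_i),(b_j),X\big)$ after reading them off returns the same paths, while conversely a tame tiling is uniquely recovered from its pair of normalised paths once the first initial edge is pinned to $1/0,0/1$ (Remark~\ref{unique for 10 01}). I expect the main obstacle to be the bookkeeping of the two layers of normalisation: the passage between skew-normalised and normalised fractions via the alternating sign rule, and the passage between the determinant form $p_i's_j'-q_i'r_j'$ and the scalar-product form of Remark~\ref{det-scal}. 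The point requiring care is that the unit and sign choices line up so that the composition is the identity \emph{on the nose}, rather than merely up to the equivalence of Theorem~\ref{classes}; in particular one must verify that the fixed initial edge $1/0,0/1$ used to reconstruct skew-normalised paths from $\mathcal T$-angle sequences is the same edge produced when recovering the normalised paths from the tiling.
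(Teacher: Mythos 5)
Your argument is correct, but it reaches the bijection by a genuinely different route from the paper. The paper deduces Theorem~\ref{bijection} from the equivalence-class bijection of Theorem~\ref{classes} by counting: the map there is surjective on individual triples by Proposition~\ref{sl-scalar}, generically each equivalence class of triples and each equivalence class of tilings contains exactly $18$ elements (Remarks~\ref{rem18} and~\ref{18}), and in the single degenerate case (tilings vanishing on all $(i,j)$ with $i+j$ of fixed parity, where both angle sequences are identically zero) each class contains exactly $6$; a surjection between fibres of equal finite size is then a bijection. You instead produce explicit mutually inverse maps by rigidly pinning the first path's initial edge to $(1/0,\,0/1)$ and storing the second path's initial edge in the columns of $X$, so that injectivity and surjectivity follow directly from the uniqueness statements in Propositions~\ref{a} and~\ref{a->path} and Remark~\ref{unique for 10 01}, with no case analysis and no need for the $\arg(a_0)$-dependent conventions in the proof of Theorem~\ref{classes}. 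The bookkeeping point you flag does check out: the sign rule converting between skew-normalised and normalised paths leaves indices $0$ and $1$ untouched, so the edge $(1/0,\,0/1)$ recovered from the tiling via Proposition~\ref{sl-scalar} agrees with the gauge used in your forward map, and $\det X=1$ guarantees that the columns of $X$ form a legitimate skew-normalised initial edge. The trade-off is that your bijection is a different map from the paper's and need not descend to the equivalence-class bijection of Theorem~\ref{classes} (rescaling $(a_i)$ moves your gauge-fixed path by a nontrivial M\"obius transformation), whereas the paper's argument exhibits its bijection on individual tilings as a refinement of the one on classes; since the theorem only asserts existence of a bijection, your construction fully suffices and is arguably cleaner.
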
  

\begin{proof}
 The map constructed in the proof of  Theorem~\ref{classes}  takes equivalence classes of triples $\big( (a_i),(b_j),X\big)$ to equivalence classes of tame  $\SL_2(\Z[\sigma])$-tilings.
  Due to Proposition~\ref{sl-scalar}, the same map is a surjective map from  triples  $\big( (a_i),(b_j),X\big)$ to  
  tame  $\SL_2(\Z[\sigma])$-tilings. We now observe that generically  equivalence classes of triples and  equivalence classes of  $\SL_2(\Z[\sigma])$-tilings consist of 18 elements each (see Remarks~\ref{rem18} and~\ref{18}), which shows that the map is one-to-one in the generic case.
  
  We are left to deal with singular equivalence classes. The small classes of tilings appear when $m_{i,j}=0$ for all $i+j$ even (or all $i+j$ odd). In every such class there are precisely six tilings, and it is easy to see that  corresponding
$\mathcal T$-angle sequences $(a_i)$ and $(b_j)$ are both identically zero. The matrix $X$ in that case can take precisely six values, so that the equivalence class of triples is also of size 6. 
This completes the proof of the theorem.
  
\end{proof}  

\section{Further comments}
\label{comments}
\subsection{Skew-normalised paths and continued fractions}

Consider a finite skew-norma\-lised path  $(v_i)_{i=0}^{n+2}$ and let $v_0=1/0$ and $v_1=0/1$. Let $(a_i)_{i=1}^{n+1}$ be its $\mathcal T$-angle sequence. Then  $(a_i)_{i=1}^{n+1}$ provides a continued fraction expansion for $v_{n+2}$:
$$
v_{n+2}=[a_1; a_{2},\ldots , a_{n+1}]= a_1+\frac{1}{\displaystyle a_2+\frac{1}{\displaystyle
a_3+\frac{1}{\displaystyle\ddots+\frac{\displaystyle 1}{a_{n+1}}}}}.
$$
Indeed, as it was shown in the proof of Proposition~\ref{a},
$$
\begin{pmatrix} p_{i}&p_{i+1} \\ q_{i}&q_{i+1}
\end{pmatrix}=
\begin{pmatrix} p_{i-1}& p_{i} \\ q_{i-1}&q_{i}
\end{pmatrix} \cdot
\begin{pmatrix} 0&1 \\ 1&a_i
\end{pmatrix},
$$
and the statement becomes a classical property of continuants for  $[a_1; a_{2},\ldots , a_{n+1}]$.

\begin{cor}
A skew-normalised path $(u_i)$ visits the same vertex of $\mathcal T$ twice if and only if its $\mathcal T$-angle sequence $(a_i)$ contains a finite subsequence $(a_i)_{i=k}^{k+n}$, $k,n\in \Z$  such that $0=[a_k;a_{k+1},a_{k+2},\dots,a_{k+n}]$.

\end{cor}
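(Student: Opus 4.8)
The plan is to convert ``revisiting a vertex'' into the vanishing of a single continuant, read off from the matrix recursion that governs the $\mathcal T$-angle sequence. Writing $u_i=p_i/q_i$ in the given skew-normalisation and setting $M_i=\begin{pmatrix} p_{i-1}&p_i\\ q_{i-1}&q_i\end{pmatrix}$, skew-normalisation gives $\det M_i=(-1)^{i-1}$, so each $M_i\in GL_2(\Z[\sigma])$, and the recursion from the proof of Proposition~\ref{a} reads $M_{i+1}=M_i R(a_i)$ with $R(a)=\begin{pmatrix}0&1\\ 1&a\end{pmatrix}$. The first step is the elementary remark that, since all fractions are irreducible, two vertices $u_c$ and $u_d$ ($c<d$) coincide as points of $\QQ(\sigma)$ exactly when their columns are proportional, i.e. when $\det\begin{pmatrix} p_c&p_d\\ q_c&q_d\end{pmatrix}=0$.

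Next I would iterate the recursion to obtain $M_d=M_c\,R(a_c)R(a_{c+1})\cdots R(a_{d-1})$. Realising $u_c$ and $u_d$ as the second columns of $M_c$ and $M_d$ and using that $M_c$ is invertible, the proportionality condition becomes a condition on the product $N:=R(a_c)\cdots R(a_{d-1})$ alone, namely that its $(1,2)$-entry vanishes. Conjugating each factor by $J=\begin{pmatrix}0&1\\1&0\end{pmatrix}$ turns $N$ into the standard continuant matrix $\prod \begin{pmatrix} a_i&1\\ 1&0\end{pmatrix}$, and the classical continuant identities identify $N_{12}$ with $K(a_{c+1},\dots,a_{d-1})$, the numerator of the continued fraction $[a_{c+1};a_{c+2},\dots,a_{d-1}]$. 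Setting $k=c+1$ and $k+n=d-1$, this is precisely the window $(a_i)_{i=k}^{k+n}$ of the statement, with the two equal vertices being $u_{k-1}$ and $u_{k+n+1}$.

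Reading the resulting equivalence $u_{k-1}=u_{k+n+1}\iff K(a_k,\dots,a_{k+n})=0$ in both directions then yields the corollary: a repeated vertex forces some such continuant, hence some continued fraction, to vanish, and conversely a vanishing continued fraction produces a coincidence $u_{k-1}=u_{k+n+1}$. The one point that needs care --- and the place I expect the only real friction --- is the passage from ``$K(a_k,\dots,a_{k+n})=0$'' to ``$[a_k;\dots;a_{k+n}]=0$'', since a priori the denominator continuant $K(a_{k+1},\dots,a_{k+n})$ might also vanish. This is settled by observing that $\det R(a)=-1$, so the continuant matrix $\prod\begin{pmatrix} a_i&1\\1&0\end{pmatrix}$ has determinant $\pm1$ and its first column $\big(K(a_k,\dots,a_{k+n}),\,K(a_{k+1},\dots,a_{k+n})\big)^{\!T}$ cannot be the zero vector; hence the denominator is nonzero whenever the numerator vanishes and the continued fraction genuinely equals $0$. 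As a consistency check, the case $n=0$ reduces to $a_k=0\iff u_{k+1}=u_{k-1}$, which is immediate from Definition~\ref{sin_T}.
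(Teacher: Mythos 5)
Your argument is correct and follows essentially the same route the paper intends: it converts the coincidence of two vertices into the vanishing of the $(1,2)$-entry of the product $R(a_c)\cdots R(a_{d-1})$ coming from the matrix recursion of Proposition 6.3, and then identifies that entry with the continuant $K(a_{c+1},\dots,a_{d-1})$, exactly the ``classical property of continuants'' the paper invokes. Your extra observation that the companion continuant $K(a_{k+1},\dots,a_{k+n})$ cannot vanish simultaneously (because the continuant matrix has determinant $\pm1$) is a worthwhile precision that justifies reading the condition as $0=[a_k;a_{k+1},\dots,a_{k+n}]$ rather than merely as the vanishing of a numerator.
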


\subsection{Some further properties of $\mathcal T$-angle } 
\label{prop T-angles}

Definition~\ref{sin_T} of $\mathcal T$-angle $\tsin (v_0v_1v_2)$ can be generalised to the situation when $v_0,v_1,v_2$ are any vertices of $\T$: here, we drop the requirement on $v_0v_1$ and $v_1v_2$ to be  edges of $\T$.
Namely, following the proof of Proposition~\ref{a}, one can define $a\in\Q(\sigma)$ such that
$$
\begin{pmatrix} p_{i+1} \\ q_{i+1}
\end{pmatrix}=
\begin{pmatrix} p_{i-1}& p_{i} \\ q_{i-1}&q_{i}
\end{pmatrix} \cdot
\begin{pmatrix} \xi \\ a
\end{pmatrix}
$$
for some $\xi\in\Q(\sigma)$ (notice that $a$ may not belong to $\Z[\sigma]$ anymore).

Next, we list some properties of $\mathcal T$-angles.

\begin{prop}
\label{identities}
Let $v_i=p_i/q_i$, where $p_i,q_i\in \Z[\sigma]$, $i=1,2,3$. Let $\det_{i,j}=p_iq_j-p_jq_i$. Then 
  \begin{itemize}
  \item[(1)]
     $\tsin (v_0v_1v_2)= \frac{\det_{0,2}}{\det_{0,1}}$;


\item[(2)]  $\tsin (v_0v_1v_2)\cdot\tsin (v_1v_2v_0)\cdot\tsin (v_2v_0v_1)=-1$.
  \end{itemize}
\end{prop}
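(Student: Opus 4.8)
The plan is to prove part (1) by a direct matrix inversion and then deduce part (2) as a purely formal consequence of part (1) together with the antisymmetry of the determinant. For part (1), I would start from the generalised defining relation stated just before the proposition, namely
\[
\begin{pmatrix} p_2 \\ q_2 \end{pmatrix}=\begin{pmatrix} p_0 & p_1 \\ q_0 & q_1 \end{pmatrix}\begin{pmatrix} \xi \\ a \end{pmatrix},
\]
where $a=\tsin(v_0v_1v_2)$ and $\xi\in\Q(\sigma)$. Inverting the $2\times 2$ matrix, whose determinant is precisely $\det_{0,1}=p_0q_1-p_1q_0$, and reading off the bottom entry of the resulting vector gives
\[
a=\frac{p_0q_2-q_0p_2}{p_0q_1-p_1q_0}=\frac{\det_{0,2}}{\det_{0,1}},
\]
which is exactly the identity claimed in (1). (The top entry simultaneously records $\xi=\det_{2,1}/\det_{0,1}$, which I would not need but which confirms the bookkeeping.)

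For part (2), the key observation is that, after relabelling the three vertices, the formula of part (1) reads $\tsin(v_av_bv_c)=\det_{a,c}/\det_{a,b}$ for any choice of the ordered triple. Applying this to the three cyclic rotations yields
\[
\tsin(v_0v_1v_2)=\frac{\det_{0,2}}{\det_{0,1}},\qquad \tsin(v_1v_2v_0)=\frac{\det_{1,0}}{\det_{1,2}},\qquad \tsin(v_2v_0v_1)=\frac{\det_{2,1}}{\det_{2,0}}.
\]
Multiplying the three fractions and using the antisymmetry $\det_{i,j}=-\det_{j,i}$ to rewrite $\det_{1,0}=-\det_{0,1}$, $\det_{2,1}=-\det_{1,2}$, and $\det_{2,0}=-\det_{0,2}$, every determinant factor cancels between numerator and denominator, and the three sign flips collapse to a single overall factor. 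The result is $-1$, which establishes (2).

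I do not expect any genuine obstacle here, since both parts reduce to elementary linear algebra over $\Q(\sigma)$ and make no use of the edge conditions or of $\Z[\sigma]$-integrality. The only point demanding care is the index bookkeeping in part (2): tracking which determinant lands in the numerator versus the denominator under each cyclic rotation, and checking that antisymmetry contributes exactly three sign changes (one per rotated factor) so that the net sign is $-1$ rather than $+1$. Stating the relabelled general form $\tsin(v_av_bv_c)=\det_{a,c}/\det_{a,b}$ explicitly before taking the product is what makes the cancellation transparent and eliminates the risk of a sign error.
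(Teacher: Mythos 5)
Your proof is correct and takes essentially the same route as the paper: part (1) is the same direct computation (in fact the formula $\tsin(v_{i-1}v_iv_{i+1})=\det_{0,2}/\det_{0,1}$ already appears, with the inversion carried out, in the proof of Proposition~6.3), and part (2) follows formally from (1) by cyclic relabelling and antisymmetry, exactly as the paper intends by ``the second follows from the first.'' Your sign bookkeeping (three factors of $-1$ in the numerator/denominator rearrangement, netting $-1$) checks out.
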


\begin{proof}
  The first property is a result of a direct computation, the second follows from the first.

\end{proof}

There is also the following version of Ptolemy relation for $\T$-angles.

\begin{prop}
  Let $v_i=p_i/q_i$, where  $p_i,q_i \in \Z[\sigma]$, $i\in \{1,2,3,4\}$ be four distinct points lying on one circle or line in $\C$ in the cyclic order $v_1v_2v_3v_4$.  Let  $v_0=p_0/q_0$ be any other point in $\QQ(\sigma)$. Define
  $$x_{i,j}=\sqrt{|\tsin (v_iv_0v_j) \cdot \tsin (v_jv_0v_i)  |   }. $$
Then $x_{1,3}x_{2,4}=x_{1,2}x_{3,4}+x_{2,3}x_{1,4}.$

\end{prop}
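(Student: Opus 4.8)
The plan is to reduce the stated relation to the ordinary Ptolemy relation for det-lengths. First I would rewrite each $x_{i,j}$ using Proposition~\ref{identities}(1). Writing $\det_{a,b}=p_aq_b-p_bq_a$, that proposition (applied to the triples $v_iv_0v_j$ and $v_jv_0v_i$, with $v_0$ in the middle) gives $\tsin(v_iv_0v_j)=\det_{i,j}/\det_{i,0}$ and $\tsin(v_jv_0v_i)=\det_{j,i}/\det_{j,0}$. Since $\det_{j,i}=-\det_{i,j}$, their product equals $-\det_{i,j}^2/(\det_{i,0}\det_{j,0})$, and therefore
$$x_{i,j}=\sqrt{\left|\frac{\det_{i,j}^2}{\det_{i,0}\det_{j,0}}\right|}=\frac{|\det_{i,j}|}{\sqrt{|\det_{i,0}|\,|\det_{j,0}|}}.$$
In particular the six units $\sigma^k$ cancel under $|\cdot|$, so each $x_{i,j}$ is independent of the chosen representatives and the quantity is well defined.

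Next I would substitute this expression into the three products in the statement. Setting $c_i=\sqrt{|\det_{i,0}|}$, every term reads $x_{i,j}=|\det_{i,j}|/(c_ic_j)$, so the three products $x_{1,3}x_{2,4}$, $x_{1,2}x_{3,4}$, $x_{2,3}x_{1,4}$ all carry the same denominator $c_1c_2c_3c_4$. Cancelling it, the claimed identity $x_{1,3}x_{2,4}=x_{1,2}x_{3,4}+x_{2,3}x_{1,4}$ becomes exactly
$$|\det_{1,3}|\,|\det_{2,4}|=|\det_{1,2}|\,|\det_{3,4}|+|\det_{2,3}|\,|\det_{1,4}|,$$
which is the Ptolemy relation $l_{13}l_{24}=l_{12}l_{34}+l_{23}l_{14}$ for the det-lengths $l_{ij}=|\det_{i,j}|$ of the four points $v_1,v_2,v_3,v_4$ taken in the cyclic order $v_1v_2v_3v_4$.

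It remains to prove this det-length Ptolemy relation, and here I would use the hypothesis that $v_1,v_2,v_3,v_4$ lie on one circle or line in $\C=\partial\H^3$. They therefore bound a common hyperbolic plane $\Pi\subset\H^3$ and form an ideal quadrilateral in $\Pi$, which the cyclic order makes convex (with diagonals $v_1v_3$ and $v_2v_4$). Since all four points lie in $\QQ(\sigma)$, Theorem~\ref{lambda=l} gives $l_{ij}=\lambda_{ij}$ with respect to the standard horospheres; note that $v_0$ plays no role here, as its contributions $l_{i0}$ have already cancelled. Restricting the standard horospheres to $\Pi$ produces horocycles, and the $\lambda$-lengths are intrinsic to $\Pi$ (the connecting geodesics stay inside $\Pi$), so the classical Ptolemy relation for ideal quadrilaterals (Example~\ref{ex_Pt}) applies and yields $\lambda_{13}\lambda_{24}=\lambda_{12}\lambda_{34}+\lambda_{23}\lambda_{14}$. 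Translating back through $l_{ij}=\lambda_{ij}$ completes the argument.

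The main obstacle is exactly this last step, and it is instructive to see why the geometric route is convenient. Purely algebraically one has the three-term Grassmann--Pl\"ucker relation $\det_{1,3}\det_{2,4}=\det_{1,2}\det_{3,4}+\det_{1,4}\det_{2,3}$, valid for any four vectors in $\C^2$; but this is an identity among complex numbers, and passing to absolute values requires that $\det_{1,2}\det_{3,4}$ and $\det_{1,4}\det_{2,3}$ have equal argument, so that the moduli add rather than partially cancel. This is precisely where concyclicity together with the cyclic order enters: it forces the cross-ratio
$$\frac{(v_1-v_2)(v_3-v_4)}{(v_1-v_4)(v_2-v_3)}=\frac{\det_{1,2}\det_{3,4}}{\det_{1,4}\det_{2,3}}$$
to be a positive real, after which $|\det_{1,3}\det_{2,4}|=|\det_{1,2}\det_{3,4}|+|\det_{1,4}\det_{2,3}|$ follows at once. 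Routing the proof through $\lambda$-lengths as above is the cleanest way to discharge this orientation bookkeeping, since Penner's relation already builds in the correct signs.
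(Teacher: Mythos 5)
Your argument is correct and follows essentially the same route as the paper: both reduce $x_{i,j}$ to $|\det_{i,j}|/\sqrt{|\det_{i,0}\det_{j,0}|}$ via Proposition~\ref{identities}(1), invoke Theorem~\ref{lambda=l} to get the Ptolemy relation for the determinants, and divide by the common factor $\sqrt{|\det_{1,0}\det_{2,0}\det_{3,0}\det_{4,0}|}$. Your write-up is somewhat more explicit than the paper's on why the $\lambda$-length Ptolemy relation applies (the concyclic points spanning an ideal quadrilateral in a common hyperbolic plane), and your closing remark on the Grassmann--Pl\"ucker identity correctly isolates where the concyclicity hypothesis is actually used.
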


\begin{proof}
  We prove the identity in assumption that all $p_i/q_i$ are irreducible. This does not affect  the validity of the statement as the identity is homogeneous.

   Denote $\det_{i,j}=p_iq_j-p_jq_i$.
  In view of Proposition~\ref{identities}(a),
  $$x_{i,j}=x_{j,i}=\sqrt{\big| \frac{\det_{i,j}\det_{j,i} }{\det_{i,0}\det_{j,0}}\big| } =\frac{|\det_{i,j}|}{\sqrt{|\det_{i,0}\det_{j,0}|}}.   $$
Theorem~\ref{lambda=l} implies that the determinants satisfy the Ptolemy relation, i.e.   
  \begin{center}
  \text{$ |\det_{1,3}||\det_{2,4}|=|\det_{1,2}||\det_{3,4}|+|\det_{2,3}||\det_{1,4}|$. }
\end{center}
Dividing every term of this equation by $\sqrt{|\det_{1,0}\det_{2,0}\det_{3,0}\det_{4,0}|}$, we obtain the required relation for $x_{i,j}$.

\end{proof}

\subsection{Other imaginary quadratic fields}

$\!\!$Let $K$ be an imaginary quadratic field $\Q(\!\sqrt{-d})$, where $d\in \Z_+$ is square-free, and let $\KK=K\cup \{\infty \}$.
Let $\mathcal O_K$ be the ring of integers of $K$, consider $K$ as the field of fractions of  $\mathcal O_K$.  
So, every point of $\KK$ can be written as an irreducible fraction $p/q$, where $p,q,\in   \mathcal O_K$. We consider $\KK$ as points of the boundary of $\H^3$, with Bianchi group $Bi(d)$ acting on $\KK$.

Following~\cite{Sch}, one can consider a graph $G$ with vertices at $\KK$ and edges between $p/q$ and $r/s$ whenever $|ps-rq |=1$. As for $d=3$, $G$ can be constructed as the orbit of a classical Farey graph $\mathcal F$ under the action of $Bi(d)$. Note that the graph is not always connected, see~\cite{St}. 

Triangular and quadrilateral faces of $G$ are described in~\cite{Sch} (up to the action of $Bi(d)$).

\subsubsection{Lambda lengths}
As for $d=3$, we can assign a standard horosphere at every point of  $\mathcal O_K$ (i.e., a sphere of Euclidean radius $1/2$), and then apply $SL_2(\mathcal O_K)$ to obtain a standard horosphere at every point of $\KK$. Two horospheres are tangent if and only if their centres are adjacent in $G$, and disjoint otherwise.

Proceeding as in Remark~\ref{radius} and using the results of~\cite{N}, we can deduce that the det-lengths still coincide with $\lambda$-lengths with respect to the standard horospheres.

In particular, this leads to the following corollary: given a normalised path $p_i/q_i$, the value $|p_{i}q_{i+2}-p_{i+2}q_{i}|$ coincides with the $\lambda$-length between $p_i/q_i$ and $p_{i+2}/q_{i+2}$.

\subsubsection{$SL_2$-tilings}
The results of Section~\ref{SL_2-tilings} are also valid for any imaginary quadratic field $K$. The only difference is that one needs to consider all units of $K$ rather than powers of $\sigma$ (for example, we need to do so in defining equivalent $SL_2(\mathcal O_K)$ tilings as in Definition~\ref{def-equiv}).

In particular, given two normalised paths in $G$, we can construct an $SL_2(\mathcal O_K)$-tiling by taking determinants of the pairs of entries. The construction provides a bijection between equivalence classes of tame $SL_2(\mathcal O_K)$-tilings and pairs of paths in $ G$ considered up to simultaneous action of $SL_2(\mathcal O_K)$ on both paths.

The counterpart of Theorem~\ref{bijection} for any $K$ is proved in~\cite{BR}, while the geometric approach via pairs of paths and their corresponding sequences of elements of $\mathcal O_K$ also works. In particular, modulus of $i$-th entry of the corresponding quiddity sequence will be equal to the $\lambda$-length between $(i-1)$-th and  $(i+1)$-th vertices of the path.

\subsubsection{Friezes}
\label{friezes}
Consider an $SL_2(\Z)$-tiling constructed via taking determinants (see Theorem~\ref{sh-det}), where two paths coincide. If the path is periodic with period $m>3$, we obtain a tame {\em frieze pattern} of height $m-3$ (see~\cite{MG}). It follows from~\cite{Sh} that all tame friezes can be obtained in this way. This can be generalized to $SL_2(\mathcal O_K)$-tilings for all imaginary quadratic fields using Schmidt arrangements described above: tame friezes of height $m-3$ without zeroes correspond precisely to non-self-intersecting closed (normalised) paths of length $m$.  

Recently, Cuntz and Holm~\cite{CH1,CH2} proved that the number of tame friezes of given  height without zeroes over $\mathcal O_K$ is finite for any $K=\Q(\sqrt{-d})$, and for $d\notin\{1,2,3,7,11\}$ all entries of friezes are actually rational integers. We would like to note that an independent proof of this result can be obtained by looking at the geometry of the corresponding graphs.

The finiteness of the number of friezes of given height without zeroes is equivalent to the finiteness of the number of closed non-self-intersecting paths up to the action of the Bianchi group. The latter is implied by the following observation: in a closed non-self-intersecting path of length $m$, the modulus of any entry of the quiddity sequence does not exceed $m-2$ (due to their relation to $\lambda$-lengths).

Further, it follows from~\cite[Theorem 5.1]{St} that for $d\notin\{1,2,3,7,11\}$ all vertices of any closed non-self-intersecting path in the corresponding graph belong to an image of $\QQ$ under an element of $Bi(d)$, which immediately implies that all entries of the frieze are integers.

In particular, all tame non-zero friezes over Eisenstein integers can be enumerated by closed paths in the tetrahedral graph (up to the symmetry group of the graph). 
It would be interesting to have a combinatorial way of a complete enumeration of all closed paths of a given length.

\end{document}